\title[ The Deformed Hermitian-Yang-Mills Equation and  Level Sets of Harmonic Polynomials]{The Deformed Hermitian-Yang-Mills Equation and Level Sets of Harmonic Polynomials}
\author[A. Jacob]{Adam Jacob*}
\thanks{$^{*}$Supported in part by a Simons Collaboration Grant.}
 \address{Department of Mathematics, University of California Davis, 1 Shields Ave., Davis, CA, 95616}
 \email{ajacob@math.ucdavis.edu}
\numberwithin{equation}{section}
\flushbottom \thispagestyle{empty} \pagestyle{plain}
\renewcommand{\thanks}[1]{\footnote{#1}} % Use this for footnotes
\newcommand{\be}{\begin{equation}}
\newcommand{\bea}{\begin{eqnarray}}
\newcommand{\eea}{\end{eqnarray}} \newcommand{\ee}{\end{equation}}
 \def\ba{\begin{eqnarray}}
\def\ea{\end{eqnarray}}
\def\ra{\rightarrow}
\def\ti{\tilde}
\def\ra{\rightarrow}
\def\[{{\bf [}}
\def\]{{\bf ]}}
\begin{document}
\newtheorem{theorem}{Theorem}
\newtheorem{proposition}{Proposition}
\newtheorem{lemma}{Lemma}
\newtheorem{corollary}{Corollary}
\newtheorem{conjecture}{Conjecture}
\newtheorem{example}{Example}
\newtheorem{claim}{Claim}
\newtheorem{assumption}{Assumption}
\newtheorem{question}{Question}
\theoremstyle{plain}

\newtheorem{definition}{Definition}
\maketitle

%\vspace{.1in}

\begin{abstract}   Suppose $v(x,y):\mathbb C\rightarrow \mathbb R$ is an entire  harmonic polynomial with no critical points in the right half plane. Let $z_1, z_2\in\mathbb C$ lie on a level set of $v$ , and assume ${\rm Re}(z_2)>{\rm Re}(z_1)\geq0$. We give a necessary and sufficient condition, depending only on algebraic properties of the polynomial $v$, for when there exists a smooth real  function $f$ whose graph $x+if(x)$ lies on a level curve of $v$ connecting $z_1$ to $z_2$. Inspired by GIT, we construct a Kempf-Ness functional on an appropriate function space, and prove the functional is bounded from below and proper if and only if a such a graph exists. As an application, we find a stability condition equivalent to the existence of a solution to the deformed Hermitian-Yang-Mills equation on the family of projective bundles $ X_{r,m}:=\mathbb P(\mathcal O_{\mathbb P^m}\oplus \mathcal O_{\mathbb P^m}(-1)^{\oplus (r+1)})$ with Calabi Symmetry.\end{abstract}

\begin{normalsize}

\section{Introduction}
In this paper we study the deformed Hermitian-Yang-Mills equation on a family of projective bundles with large symmetry.  Known as {\it Calabi symmetry}, this condition allows us to write the equation as an ODE with prescribed boundary values, solutions of which lie on the level set of a given harmonic polynomial. Thus in order to solve our equation, we are led to the following more general question: When does  a level set of a  harmonic polynomial in $\mathbb C$ stays graphical over an interval in the $x$-axis? Surprisingly, even in this general setting, many of the formal structures from the original setup carry over, providing insight into the relationship between the background geometry and notions of algebraic stability.

First we describe the general problem. Let $v(x,y):\mathbb C\rightarrow \mathbb R$ be an entire  harmonic polynomial. For any $m\in\mathbb R$, we consider the level curve
$$\mathcal C_m:= \{x+iy\in\mathbb C\,|\, v(x,y)=m\}.$$
Now, choose two points $z_1=a +ip$ and $z_2=b+iq$ on the same level curve $\mathcal C_m$, with $a<b$. We are primarily interested in the following question:

\begin{question}
\label{mainquestion}
Under what conditions on the points $z_1, z_2,$ can we find a smooth function $f:[a ,b]\rightarrow\mathbb R$, with $f(a )=p, f(b)=q$, so that the graph  $x+if(x)\subset\mathbb C$ lies on  $\mathcal C_m?$  \end{question}

In order for such a graph to exist, we need   $z_1$ and $z_2$ to lie on the same connected component of $\mathcal C_m$.  Furthermore, for $ f(x)$ to be smooth, this component of $\mathcal C_m$ can not achieve vertical slope between $z_1$ and $z_2$. Now, in practice the level sets $\mathcal C_m$ can be quite complicated, and so we are interested in whether the above question has an answer which does not appeal to specific analytic details on the shape of  $\mathcal C_m$. In particular, a desirable solution is one  that depends only on $a, b, p, q,$ and algebraic properties of the polynomial $v$. We find such a simple condition using the Cauchy index, and furthermore relate existence of graph to notions of stability arising from  geometry.

More broadly, in complex differential geometry, many important PDEs  can be expressed as zeros of a moment map  associated to a complexified group orbit in an infinite dimensional symplectic manifold. Prominent examples include the work of Calabi \cite{Ca} on extremal metrics, Yau \cite{Yau} on Ricci-flat metrics, Chen-Donaldson-Sun \cite{CDS1,CDS2,CDS3} on K\"ahler-Einstein metrics on Fano manifolds, Atiyah-Bott \cite{AB} on the Yang-Mills equation, as well as the work of Donaldson \cite{D1,D2} and Uhlenbeck-Yau \cite{UY} on the  Hermitian-Yang-Mills equation. Inspired by Geometric Invariant Theory (GIT), a solution of the PDE corresponds to an associated algebraic stability condition. To elaborate further, we briefly discuss the finite dimensional case here. Let  $K$ be a compact real Lie group with complexification $G$,  and suppose $G$ acts on a projective K\"ahler manifold $(X,\omega)$, with $K$ preserving $\omega$. A point $x\in X$ is GIT stable if and only if the orbit of $x$ is closed under all one parameter subgroups of $G$, by the Hilbert-Mumford criterion. Now, for each $x$ one can construct a $K$-invariant function $\Phi_x$ called the Kempf-Ness function, which is convex along one parameter subgroups of $G/K$, and has critical points at zeros of the moment map. The Kempf-Ness Theorem then characterizes stability in terms of the properties of the Kempf-Ness function: $x$ is semistable stable if and only if $\Phi_x$ is bounded below, and $x$ is stable if and only if $\Phi_x$ is proper on $G/K$. Thus returning to  the infinite dimensional case, finding and studying an associated Kempf-Ness function can be an important step towards solving the desired PDE.

Now, in the case of the special Lagrangian equation, an infinite dimensional GIT framework has been put in place by Thomas \cite{T} and Solomon \cite{S}. A  key   motivation for studying  the deformed Hermitian-Yang-Mills equation  (which we denote by dHYM)   is that under semi-flat SYZ mirror symmetry it can be derived as the mirror  to the special Lagrangian graph equation   \cite{LYZ}.  Recently, Collins-Yau \cite{CY2, CY}  have  developed the mirror  GIT picture for the dHYM equation. In their extraordinary paper, the authors construct a space  of potentials for the equation (that can be thought of as analogous to $G/K$), on which they define a Riemannian structure and compute the geodesic  equation.  Additionally they introduce a Kempf-Ness functional $\mathcal J$ which is convex along geodesics and has critical points solutions of the dHYM equation. Their main analytic result is to prove existence of weak $C^{1,\alpha}$ geodesics on this space. These geodesics are use to compute the limiting slope of $\mathcal J$ along model infinite rays, yielding a necessary algebraic stability condition for existence. 

Our contribution  is to demonstrate that much of the formal GIT picture considered by Collins-Yau carries over to the general setup  behind Question \ref{mainquestion}. This includes constructing an appropriate function space along with an associated Kempf-Ness functional. Now, the notion of stability we consider is not quite as complicated as the necessary condition found by Collins-Yau, perhaps due to the specific geometry underlining our setup. Instead our stability is more in line with the initial conjecture posited by the author, along with Collins and Yau, in \cite{CJY}.  Before we introduce this condition, we establish some necessary background.

First, by adding a constant to our harmonic polynomial $v:\mathbb C\rightarrow \mathbb R$, it suffices to consider Question \ref{mainquestion} on the zero level set $\mathcal C_0$. Choose a harmonic conjugate $u$ and corresponding holomorphic function $w(z)=u+i v$. Let ${\mathcal H}_x=\{z\,|\, {\rm Re}(z)>0\}$ denote the right half-plane. We restrict our study to the case that $w$ has no critical points in ${\mathcal H}_x$, and the points $z_1, z_2$ lie in $\overline{{\mathcal H}_x}$. Thus one of our points can be a critical point, but not both. For simplicity we denote the choice of boundary data as $\vartheta:=\{z_1,z_2\}$. By a slight abuse of terminology,  for our initial definition of stability, we use the analytic condition of existence of a graph connecting $z_1$ to $z_2$. 

\begin{definition}
\label{stabledef}
\normalfont
For any boundary data $\vartheta\subset\overline{{\mathcal H}_x}$ we say
 \begin{enumerate}[label=(\roman*)] 
		\item    $\vartheta$ is {\it stable} if  there exists a a smooth function $f:[a,b]\rightarrow\mathbb R$ lying on a level curve of $\mathcal C_0$ and satisfying   $f(a)=p$, $f(b)=q$.
		\item    $\vartheta$ is {\it strictly semistable}  if  there exists a continuous function $f:[a,b]\rightarrow\mathbb R$ lying on a level curve of $\mathcal C_0$ and satisfying $f(a)=p$, $f(b)=q$, with $f$  not  $C^1$  at   $a$. 
		\item   $\vartheta$ is {\it unstable} if   no such continuous function exists.  
	\end{enumerate}
	In both cases (i) and (ii) we say $\vartheta$ is semistable. 
	\end{definition}

From this definition we turn to the task of finding a purely  algebraic criteria for existence. As a first step, we consider the level curves $$\mathcal D_0:=\{z\,|\, {\rm Re}(w'(z))=0\},$$ and demonstrate that ${\mathcal H}_x\backslash \mathcal D_0$ can be   partitioned into $n$ regions $A_1,...,A_n$. Next we define a counting function $N:\overline{{\mathcal H}_x}\longrightarrow \frac12\mathbb Z$ that is able to determine whether a point in $\overline{{\mathcal H}_x}$ lies inside or on the boundary of a particular region. This counting function only depends on algebraic properties of $w'(z)$. Specifically, if we fix a point $z_0=x_0+iy_0\in \overline{{\mathcal H}_x},$   then $N(z_0)$ is computed using the Cauchy index over $(-\infty, y_0]$ of the real polynomial 
\be
R_{x_0}(t)=\frac{{\rm Im}(w'(x_0+it))}{{\rm Re}(w'(x_0+it))}.\nonumber
\ee
By Sturm's Theorem, the Cauchy index can be computed from $R_{x_0}(t)$ using a division algorithm \cite[Theorem 3.20]{E}. Thus,  $N(z_1)$ and $N(z_2)$ depend on algebraic properties of the one variable polynomials $w'(a+iy)$ and $w'(b+iy)$. In the case that $a=0$ we also need the location and order of the critical points of $w$ on the $y$-axis. 

Our next step is to determine which initial configurations lead to graphs on the level set $\mathcal C_0$. There are a few cases to consider, as extra care is required when $z_1$  is a critical point of $w(z)$. Furthermore, a critical point may either be {\it generic} or {\it non-generic} (see Section  \ref{prelimsection} for a definition), and these cases need to be treated separately. Our first main result characterizes stability purely  in terms of the counting function $N.$

\begin{theorem}
\label{Cauchystable}
Fix boundary data $\vartheta=\{z_1,z_2\}$, and consider the counting function $N:\overline{{\mathcal H}_x}\longrightarrow \frac12\mathbb Z$ defined in \eqref{countingfunction1} and \eqref{countingfunction2}.
\begin{enumerate}[label=(\roman*)] 
	\item  If $z_1$ is not a critical point of $w(z)$, then the configuration $\vartheta$ is stable if  $|N(z_1)-N(z_2)|=0, $ strictly semistable if  $|N(z_1)-N(z_2)|=\frac12$, and unstable otherwise.

 \item If $z_1$ is a generic critical point of order $k$, then  $\vartheta$ is stable if  $$N(z_2)\in\{ N(z_1)-k, N(z_1)-k+1,..., N(z_1)\},$$ and  unstable otherwise.

 \item If  $z_1$ is a non-generic critical point of order $k$, then $\vartheta$ is stable if 
$$N(z_2)\in\{ N(z_1)-k+\frac12, N(z_1)-k+\frac32,..., N(z_1)-\frac12\},$$
strictly semistable  if  
$$N(z_2)\in\{ N(z_1)-k-\frac12, N(z_1)+\frac12\},$$
and unstable otherwise.
  	\end{enumerate}
	\end{theorem}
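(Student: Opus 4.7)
The plan is to tie the arithmetic count $N$ to the topological partition $\mathcal{H}_x \setminus \mathcal{D}_0 = A_1 \sqcup \cdots \sqcup A_n$ referenced before the theorem, since the curve $\mathcal{D}_0$ is precisely the locus where a level set of $v$ acquires vertical tangent. Concretely, from $w = u+iv$ and the Cauchy--Riemann equations, $\mathrm{Re}(w') = v_y$, so a component of $\mathcal{C}_0$ is graphical over an $x$-interval exactly on the portion that lies in some single $A_i$. Therefore the existence of $f$ for Question \ref{mainquestion} reduces to the two points lying in a common $A_i$ and on the same component of $\mathcal{C}_0 \cap \overline{A_i}$.

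First I would verify that $N$ is locally constant on $\mathcal{H}_x \setminus \mathcal{D}_0$, and hence constant on each $A_i$. The Cauchy index of $R_{x_0}(t) = \mathrm{Im}(w'(x_0+it))/\mathrm{Re}(w'(x_0+it))$ on $(-\infty, y_0]$ only changes when either a pole enters or leaves the interval (that is, when $z_0$ crosses $\mathcal{D}_0$) or when two poles collide (i.e.\ when $x_0$ crosses a horizontal tangent line of $\mathcal{D}_0$). The first event changes $N$ by $\pm \tfrac12$, while the second, by a standard perturbation of the Sturm sequence, preserves the index. Together with the fact that $R_{x_0}(t)\to 0$ as $t\to-\infty$ and the usual Sturm/Cauchy-index calculus (as in \cite[Thm.~3.20]{E}), this shows $N$ assigns to each $A_i$ a distinct half-integer, and adjacent regions across an arc of $\mathcal{D}_0$ have values differing by exactly $\tfrac12$.

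Given this, part (i) is immediate: when $z_1$ is not critical, both $z_1$ and $z_2$ sit in $\overline{\mathcal{H}_x}\setminus\{\text{critical pts}\}$, and $N(z_1)=N(z_2)$ forces them into the same $A_i$, so I can follow the component of $\mathcal{C}_0\cap A_i$ through $z_1$ as a smooth graph—using $\mathrm{Re}(w')\neq 0$ to invert, and using the control on $v_y$ to see the graph runs over the full interval $[a,b]$ before returning to $\mathcal{D}_0$ at $z_2$. A difference of $\tfrac12$ places them in regions sharing one $\mathcal{D}_0$-arc; the graph can still be built continuously by passing through a vertical-tangent point, giving strict semistability, while any larger gap would require crossing $\mathcal{D}_0$ at least twice, which contradicts graphicality.

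For (ii) and (iii), I would analyze the local picture at a critical point $z_1$ of order $k$ on the imaginary axis. There $w$ behaves like $c(z-z_1)^{k+1}$ up to lower order, so exactly $k+1$ level-curve branches of $\mathcal{C}_0$ emanate from $z_1$ at equal angles, and they separate a neighborhood of $z_1$ in $\overline{\mathcal{H}_x}$ into $k$ or $k+1$ sectors depending on whether a branch is tangent to the $y$-axis (the generic vs.\ non-generic dichotomy). Counting how many of the adjacent regions $A_i$ touch $z_1$, and tracking what value of $N$ each contributes—via a careful analysis of how the poles of $R_{0}(t)$ limit into the critical pole at $t=\mathrm{Im}(z_1)$—I expect to recover the allowed ranges of $N(z_2)$ in (ii) and (iii), with strict semistability in (iii) corresponding to the two ``outermost'' sectors whose accessing graph must cross a vertical tangent at the endpoint.

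The main obstacle is the critical-point bookkeeping in (iii): the extra $\tfrac12$-shifts and the appearance of strict semistability only at the two extreme values of $N(z_2)$ come from the subtle interaction between the Sturm sequence at $x_0 = 0$ (where some poles of $R_{0}$ collide at the critical point) and the geometric fact that a non-generic branch is tangent to $\{x=0\}$. Getting the signs and the half-integer offsets right in this degenerate Sturm calculation, and matching them to the correct sector count of the local level-curve picture, is where I would expect the argument to be most delicate.
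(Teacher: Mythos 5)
Your high-level strategy matches the paper's: translate $N$ into a label of the region $A_i$ containing (or bordered by) each $z_j$, then invoke the analytic characterization of stability via region-membership. However, two load-bearing steps are missing or mis-stated, and a third you flag but do not perform.

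First, when you say crossing an arc of $\mathcal D_0$ ``changes $N$ by $\pm\frac12$,'' you leave the sign indeterminate. The entire force of the paper's Lemma~\ref{labelinglemma} is proving that the sign is \emph{coherent}: every discontinuity of $R_{x_0}$ encountered moving up the vertical line jumps the same way (from $+\infty$ to $-\infty$), because the curves $\tilde\gamma_k$ where ${\rm Im}(w')=0$ interlace with the $\gamma_k$ and never meet them in ${\mathcal H}_x$. Without this, contributions could cancel, and $N$ would not distinguish regions. Your appeal to ``a standard perturbation of the Sturm sequence'' does not supply this. (Also, a minor slip: pole collisions correspond to \emph{vertical} tangents of $\mathcal D_0$, not horizontal ones, though Lemma~\ref{vertical} shows these never occur in ${\mathcal H}_x$, so the case is vacuous. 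And adjacent open regions $A_k$, $A_{k+1}$ differ in $N$ by $1$, with $\gamma_k$ sitting at the intermediate half-integer.) Second, concluding ``same region $\Rightarrow$ graphical connection'' tacitly uses that $A_k\cap\mathcal C_0$ is a \emph{single} curve, which is Proposition~\ref{onecurveregion}; nothing in your outline provides this, and with two disjoint $\mathcal C_0$-arcs in $A_k$ the argument collapses. Third, for parts (ii)--(iii) you correctly identify the degenerate Sturm/sector calculation at the critical point as the hard part, but the paper actually carries it out (the Cauchy index contribution is $0$ for a generic critical point, $-1$ for a non-generic one, with the counting function incremented by $k$ resp.\ $k+1$), and this computation is essential to recover the precise ranges of $N(z_2)$; your proposal stops at ``I expect to recover\dots'' rather than establishing it.
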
 Combined with Definition \ref{stabledef}, we view the above result  as a satisfactory answer to Question \ref{mainquestion}, as it provides conditions for existence that depend only algebraic data related to the endpoints.

Our next goal is to find an appropriate space of real functions over $[a,b]$, with specified boundary data, over which we can define our Kempf-Ness functional. In Section \ref{functionspace} we define $\mathcal M_\vartheta$, which is a natural analogue to the space of functions considered by Collins-Yau \cite{CY2, CY} for the dHYM equation.  On $\mathcal M_\vartheta$ we construct a metric and write down the corresponding geodesic equation. We then define our    Kempf-Ness functional  $\mathcal J:\mathcal M_\vartheta\rightarrow\mathbb R$  by its derivative along a path of functions $f_t$ via
\be
\frac{d}{dt}\mathcal J(\dot f_t)=\int_a^b\frac{ \dot f_t}{\sigma}  {\rm Im}(w(x+if_t))dx.\nonumber
\ee
Here $\sigma:[a,b]\rightarrow\mathbb R$ is a fixed  function that plays the role of a background metric. Right away we see that if $f$ lies on $\mathcal C_0$ then ${\rm Im}(w(x+if))=0$, and so we are at a critical point for $\mathcal J$. We demonstrate that $\mathcal J$ is convex along geodesics, and then show that our notions of stability correspond precisely to asymptotic behavior of paths in     $\mathcal J$. Specifically, we prove:

 \begin{theorem}
 \label{functionalstability}
For all boundary data $\vartheta=\{z_1, z_2\}\subset \overline{{\mathcal H}_x}$:
\begin{enumerate}[label=(\roman*)] 
		\item  $\vartheta$ is stable if   $\mathcal M_\vartheta$ is non-empty and the  $\mathcal J$-functional is bounded from below and proper.

		 		\item   $\vartheta$ is strictly semistable if   $\mathcal M_\vartheta$ is non-empty and the  $\mathcal J $-functional is bounded from below   but not proper.
				
						\item  $\vartheta$ is unstable if   $\mathcal M_\vartheta$ is empty, or $\mathcal M_\vartheta$ is non-empty and the  $\mathcal J $-functional is not bounded from below. 
	\end{enumerate}
\end{theorem}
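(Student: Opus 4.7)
The overall strategy mirrors the finite-dimensional Kempf-Ness picture: since $\mathcal J$ is convex along geodesics in $\mathcal M_\vartheta$, any critical point is automatically a global minimum, so it suffices to analyze the behavior of $\mathcal J$ along infinite geodesic rays in order to distinguish bounded-below from proper, and both from unbounded-below. The integral formula $\frac{d}{dt}\mathcal J(f_t)=\int_a^b \frac{\dot f_t}{\sigma}\mathrm{Im}(w(x+if_t))\,dx$ converts each of these analytic properties into a statement about the sign of an integral at infinity, and the plan is to extract that sign from the algebraic data supplied by Theorem \ref{Cauchystable}. Because the three cases in Definition \ref{stabledef} are exhaustive and mutually exclusive, and the three analytic alternatives on the right-hand side of the theorem are likewise exhaustive and mutually exclusive, it is enough to prove the three forward implications.

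For part (i), Theorem \ref{Cauchystable} produces a smooth $f_* \in \mathcal M_\vartheta$ whose graph lies on $\mathcal C_0$ and joins $z_1$ to $z_2$; then $\mathrm{Im}(w(x+if_*(x)))\equiv 0$, so $f_*$ is a critical point of $\mathcal J$, and by convexity along geodesics realizes the global minimum, giving boundedness from below. For properness I would parametrize a unit-speed geodesic ray $f_t$ emanating from $f_*$ and show that the asymptotic slope $\lim_{t\to\infty}\frac{d}{dt}\mathcal J(f_t)$ is strictly positive for every non-trivial direction: convexity forces the graph of $f_t$ to move monotonically away from $\mathcal C_0$ on the appropriate side of the zero locus, so $\mathrm{Im}(w(x+if_t))$ acquires a definite sign in the limit, and its pairing with $\dot f_t/\sigma$ converges to a positive number.

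For part (ii), the continuous-but-not-$C^1$ solution $f$ furnished by Theorem \ref{Cauchystable} is not itself in $\mathcal M_\vartheta$, but arises as the endpoint of an infinite geodesic ray constructed by smoothing $f$ near the corner at $a$; along this ray $\mathcal J$ decreases monotonically to the finite value $\mathcal J(f)$, giving boundedness without properness since the ray has infinite length. For part (iii), if $\mathcal M_\vartheta$ is empty we are done. Otherwise, starting from any $f_0\in\mathcal M_\vartheta$ and exploiting the jump in $N(z_1)-N(z_2)$ guaranteed by Theorem \ref{Cauchystable} in the unstable case, I would construct a geodesic ray along which $\mathrm{Im}(w(x+if_t))$ maintains a definite sign against $\dot f_t/\sigma$, forcing $\mathcal J(f_t)\to -\infty$ by integration in $t$.

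The main obstacle is the asymptotic analysis along geodesic rays. Two technical issues dominate: establishing long-time existence of geodesic rays in $\mathcal M_\vartheta$ with prescribed behavior at infinity, which I expect will require an approximation scheme in the spirit of Collins-Yau \cite{CY2, CY} since the geodesic equation will be degenerate elliptic, and then matching precisely the algebraic input, namely the partition of $\overline{\mathcal H_x}$ by $\mathcal D_0$ and the counting function $N$, with the sign of the slope integral at infinity. Carrying out this matching case by case, especially when $z_1$ is a generic or non-generic critical point of $w$, is where most of the work lies.
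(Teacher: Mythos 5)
Your high-level plan—exploit exhaustiveness and mutual exclusivity of both sides to reduce to three forward implications, then anchor everything to the convexity of $\mathcal J$ and the critical point on $\mathcal C_0$—is sound. But the technical core you propose, working along geodesic rays and reading off signs of asymptotic slopes, has a genuine gap that the paper deliberately avoids. You acknowledge the obstacle yourself: long-time existence of geodesic rays in $\mathcal M_\vartheta$, with prescribed behavior at infinity, is nontrivial (degenerate elliptic, open space, boundary behavior to control). The paper never establishes any such geodesic existence result, and nothing in its Sections 4--5 delivers it. Your claim that the critical point ``by convexity along geodesics realizes the global minimum'' needs every $\phi\in\mathcal M_\vartheta$ to be joinable to $f_*$ by a geodesic lying in $\mathcal M_\vartheta$; that connectedness-by-geodesics statement is unproven and plausibly false without further work. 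Similarly, your parts (ii) and (iii) both lean on geodesic rays you have not constructed.

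The paper's route is more elementary and sidesteps these issues entirely. For the semistable case it introduces the zeroth-order flow $\dot f_{\phi_t}=-\mathrm{Im}(w(z_{\phi_t}))$, which needs no geodesic theory: one shows $f_{\phi_t}\to f$ in $C^0$, checks that $\frac{d}{dt}\mathcal J(\dot\phi_t)=-\int_a^b \mathrm{Im}(w(z_{\phi_t}))^2/\sigma\,dx\le 0$, and then carefully verifies that the integrand stays bounded near the endpoints so $\mathcal J(f_{\phi_t})\to\mathcal J(f)$. Boundedness below follows immediately, and the stable vs.\ strictly semistable dichotomy is then determined by whether the limiting $f$ lies in $\mathcal M_\vartheta$ or not; note the paper's definition of \emph{proper} is that the unique global minimum is attained, so no slope-at-infinity calculation is required for part (i). For part (iii) the paper does not use geodesics either: when $\mathcal M_\vartheta\ne\varnothing$ and $\vartheta$ is unstable, it constructs an explicit one-parameter family by steepening a linear piece near $z_1$ and smoothing the corner, keeping the defining inequality of $\mathcal M_\vartheta$, and estimates $\frac{d}{dt}\mathcal J(\dot\phi_t)\lesssim -\eta_0\epsilon_0/t$, which integrates to $-\infty$. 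You should replace your geodesic-ray machinery with this flow-plus-explicit-family argument (or supply the missing geodesic existence and connectivity theory, which would be substantially harder than the theorem itself).
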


Here by proper we mean that   $\mathcal J(\cdot)$ achieves a unique global minimum on the set $\mathcal M_\vartheta$. Not only does this result provide another satisfactory answer to Question \ref{mainquestion}, but again, as our main application  is to the dHYM equation on manifolds with Calabi-Symmetry, the above Theorem gives further evidence that in the general case the analogous $\mathcal J$-functional   will have similar properties.

We now define the dHYM equation. Let $(X,\omega)$ be a compact K\"ahler manifold of complex dimension $n$, and $[\alpha]\in H^{1,1}(X,\mathbb R)$ a real cohomology class.  The   dHYM  equation seeks a representative $\alpha\in[\alpha]$ satisfying
\be
\label{dHYM1}
{\rm Im}(e^{- i\hat\theta}(\omega+i\alpha)^n)=0,
\ee 
where $e^{i\hat\theta}\in S^1$ is a fixed constant.  Initial attempts to solve equation \eqref{dHYM1} were undertaken in \cite{JY} and later \cite{CJY}, and relied on certain analytic assumptions, namely that the class $[\alpha]$ admitted a representative that satisfied a positivity condition.  Following the work of  Lejmi-Sz\'ekelyhidi and Collins-Sz\'ekelyhidi  on the $J$-equation \cite{CS, LS}, the  author, along with T.C. Collins and   S.-T. Yau, integrated the positivity condition along subvarieties to develop a necessary class condition for existence, and conjectured it was a sufficient condition as well  \cite{CJY}. The conjecture can be described as follows.

For any analytic subvariety $V\subset X$, define the following complex number, which we refer to as the {\it charge} of the subvariety:
\be
\label{centralcharge}
Z_V([\alpha])=-\int_Ve^{i\omega+\alpha}.
\ee
By convention we only integrate the term in the expansion of order ${\rm dim}(V)$. 
Both our notation and the inspiration for our stability come from   the work of  Douglas-Fiol-R\"omelsberger on $\Pi$-stability \cite{DFR}, as well as the work of Bridgeland on stability on triangulated categories \cite{Bridgeland}. 
Now, the main results of \cite{CJY} rely on an assumption referred to as {\it supercritical phase}, which assumes that the constant $\hat\theta$ can be lifted to $\mathbb R$ to lie within the interval $((n-2)\frac\pi 2,n\frac\pi2)$. Therefore we state the conjecture with this assumption:
\begin{conjecture}[Collins-J-Yau \cite{CJY}] \label{theconjecture}
The  cohomology class $[\alpha]\in H^{1,1}(X,\mathbb R)$ on a compact K\"ahler manifold $(X,\omega)$ admits a solution to the deformed Hermitian-Yang-Mills equation \eqref{dHYM1} (with supercritical phase) if and only if $Z(X)\neq 0$, and for all analytic subvarieties $V\subset X$,
\be
\label{stabconj}
{\rm Im}\left(\frac{Z_V([\alpha])}{Z_X([\alpha])}\right)>0.
\ee
\end{conjecture}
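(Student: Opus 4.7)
The plan is to prove the two directions separately. For necessity (solution implies stability), I would start from a solution $\alpha \in [\alpha]$ of \eqref{dHYM1}. Under the supercritical phase assumption, the equation forces $e^{-i\hat\theta}(\omega+i\alpha)^n$ to be a strictly positive real $(n,n)$-form, so $Z_X([\alpha])$ has argument equal to $\hat\theta$. For a proper analytic subvariety $V$, I would pass to a resolution $\pi:\tilde V\to V$ and express $Z_V([\alpha])=-\int_{\tilde V} e^{i\pi^*\omega+\pi^*\alpha}$. A pointwise linear-algebra lemma, diagonalising $\pi^*\alpha$ with respect to $\pi^*\omega$, should then show ${\rm Im}(Z_V/Z_X)>0$; this direction was essentially established in Collins-Jacob-Yau \cite{CJY}, and the present plan would only need to verify it in the presence of singularities by a standard resolution and functoriality argument.

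For sufficiency (stability implies solution), I would run the continuity method along a path $[\alpha_t]=[\alpha]+t[\beta]$ with $\beta$ chosen so that $t=1$ admits a solution (e.g.\ a large K\"ahler perturbation) and $t=0$ is the target class, verifying first that stability \eqref{stabconj} is preserved along the path. Openness is a standard application of the implicit function theorem: the linearisation of \eqref{dHYM1} is elliptic in the supercritical phase regime. Closedness reduces, via Evans-Krylov and bootstrap, to a uniform $C^2$ bound on the potentials $\varphi_t$, which in \cite{CJY} is in turn reduced to a $C^0$ bound together with a pointwise cone condition: the existence, for each $t$, of some representative $\alpha_t'\in[\alpha_t]$ whose eigenvalues with respect to $\omega$ have phase sum strictly greater than $(n-2)\tfrac{\pi}{2}$ at every point.

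The crux — and where I expect the main obstacle to lie — is producing such a pointwise subsolution out of the purely numerical condition \eqref{stabconj}. The strategy, modeled on the Demailly-P\u{a}un proof of the Nakai-Moishezon criterion in the K\"ahler cone and on Chen's work on the $J$-equation, is a mass-concentration argument: assume for contradiction that no representative with the required pointwise phase exists, extract via a minimising sequence a non-trivial closed positive current $T$ whose support is contained in some proper analytic subvariety $V$ (using Siu semicontinuity to identify the support), and then test the stability inequality against $V$ to reach a contradiction. The delicate part is controlling the lower-order terms in the expansion of $e^{i\omega+\alpha}$ restricted to $V$ and showing that the induced numerical inequality has the correct sign on the limiting current; this appears to require an inductive cascade through subvarieties of every codimension together with a careful analysis of how the phase function behaves under restriction. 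It is this step that presently keeps the full conjecture out of reach in arbitrary dimension, and it is where the bulk of the proof effort would have to be concentrated.
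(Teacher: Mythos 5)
The statement you were asked to prove is stated in the paper as a \emph{conjecture} (Conjecture \ref{theconjecture}), not a theorem, and the paper contains no proof of it. The paper explicitly records that only ``slightly weaker versions'' have been established --- by Chen under a uniform positivity hypothesis, and by Chu--Lee--Takahashi under stability along a test family (which suffices in the projective case) --- and in the generality stated the conjecture remains open. So there is no ``paper's own proof'' to compare your attempt against, and your proposal is, as you yourself note, a plan rather than a proof.

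That said, the plan is an accurate description of the state of the art. The necessity direction via linear algebra on $\pi^*\omega$, $\pi^*\alpha$ after resolution, the continuity method with openness from ellipticity and closedness reduced through $C^{2}$ and $C^{0}$ estimates to the existence of a pointwise supercritical subsolution in $[\alpha_t]$, and finally a Demailly--P\u{a}un/Chen-style mass-concentration argument to extract a destabilising current supported on a proper subvariety --- this is exactly the roadmap followed by \cite{CJY,C1,CLT}. The gap you flag at the end is the real one: converting the numerical inequality \eqref{stabconj} into a pointwise subsolution without strengthening it (e.g.\ to Chen's uniform stability or Chu--Lee--Takahashi's test-family condition) is precisely what is not known in general, and your proposal neither closes it nor claims to.

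What the paper actually does toward Conjecture \ref{theconjecture} is entirely different in spirit. Rather than attack the general PDE, it restricts to the projective bundles $X_{r,m}$ with Calabi symmetry, where the dHYM equation collapses to an exact ODE whose solutions are graphs lying on a level set $\mathcal{C}_0$ of a harmonic polynomial $\mathrm{Im}(w(z))$ in the plane. The existence question then becomes a concrete question about level curves, resolved via a counting function built from the Cauchy index (Theorem \ref{Cauchystable}), and then translated back into a statement about the lifted arguments (``grades'') of the charges $Z_X$, $Z_{D_\infty}$, $Z_P$ (Theorem \ref{mainCalabi}). This verifies a precise refinement of \eqref{stabconj} in that symmetric setting --- in particular it shows which subvarieties' charges actually control existence --- but it is a special case, not a proof of the conjecture. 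If your goal is to engage with what the paper proves, you should be comparing against Theorem \ref{mainCalabi} and its ingredients (Proposition \ref{onecurveregion}, Theorem \ref{Cauchystable}, Theorem \ref{functionalstability}), not against Conjecture \ref{theconjecture} itself.
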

Slightly weaker versions of the above conjecture have recently been solved by Chen \cite{C1}, and Chu-Lee-Takahashi \cite{CLT}. In particular Chen solved the conjecture using a uniform positivity assumption for the associated integrals, while Chu-Lee-Takahashi assume the slightly weaker assumption of stability along a test family. Notably this proves the full conjecture in the projective case. We remark that both of their results rest on the supercritical phase assumption, and in the general setting there is not an agreed upon conjecture for when solutions to the dHYM equation exists, although there are some natural extensions.

We now restrict our attention to the given class of manifolds with large symmetry.  Let $E\rightarrow\mathbb P^m$ be the rank $r+1$ vector bundle  associated to  the locally free sheaf $\mathcal O_{\mathbb P^m}(-1)^{\oplus (r+1)}$, and consider the projective bundle $ X_{r,m}:=\mathbb P(\mathcal O_{\mathbb P^m}\oplus E).$
The vector space    $H^{1,1}(X_{r,m},\mathbb R)$ is spanned by two classes:  The pullback of the hyperplane divisor from $\mathbb P^m$, denoted $D_H$, and the the divisor at infinity $D_\infty\in|\mathcal O_{X_{r,m}}(1)|$.
Choose a K\"ahler form in the class $[\omega]=\xi_1 [D_H]+b [D_\infty]$ for $\xi_1, b>0$, and fix a class $[\alpha]=\xi_2 [D_H]+q [D_\infty]$ for any real $\xi_2, q$. Additionally, the projection $\pi: X_{r,m}\rightarrow\mathbb P^m$ admits a section (the zero section of the bundle), which we denote by $P$.

The manifolds $X_{r,m}$ admit enough symmetry to allow for a family of  K\"ahler metrics  defined by a function of a single real variable, which Calabi exploited to construct metrics with constant scalar curvature \cite{Ca}. As mentioned above, such metrics are said to satisfy {\it Calabi symmetry}, and have been studied  in various other settings, including Metric flips \cite{SY}, the long term behavior of the K\"ahler-Ricci flow \cite{Song, SW2, SW3, SW4}, and the $J$-equation \cite{FL}. Here we use Calabi symmetry to reduce the dHYM equation to an ODE with boundary values, and demonstrate that graphs of solutions lie on a level set of  a harmonic function function Im$(w(z)),$ where $w(z)$ is the antiderivative of $w'(z)=e^{-i\hat\theta} (\xi_1+i\xi_2+z )^m z^r$ satisfying $w(0)=0$. Thus Theorem \ref{Cauchystable} applies in this setting, and  our main job is to relate the Cauchy index and counting function $N$ to the charges associated to the subvarieties $D_\infty$ and $P$.

To accomplish this, we construct  paths,  $Z_{D_\infty}(t):[q,\infty)\rightarrow \mathbb C^*$ and $Z_P(t):[0,\infty)\rightarrow \mathbb C^*$ (defined by \eqref{Dinftylift} and \eqref{ZPlift}), that begin at the charges  $Z_{D_\infty}([\alpha])$ and $Z_P([\alpha])$,  winding counterclockwise about the origin as $t$ increases, approaching the negative real axis as $t\rightarrow\infty$. We also construct a lift of $Z_X([\alpha])$ using a path in $\mathbb C^*$. Thus we get well defined lifts of the arguments of $Z_X([\alpha])$, $Z_{D_\infty}([\alpha])$, and $Z_P([\alpha])$. Again, following the notation of \cite{DFR},   these lifts allow us to define the {\it grade} of a subvariety $V$ by
$$\phi_V([\alpha])=\frac1\pi{\rm arg}(Z_V([\alpha])).$$
We then  conclude the following:
\begin{theorem}
\label{mainCalabi}
On the projective bundle $X_{r,m}:=\mathbb P(\mathcal O_{\mathbb P^m}\oplus \mathcal O_{\mathbb P^m}(-1)^{\oplus (r+1)}),$ fix a  K\"ahler form $\omega\in \xi_1 [D_H]+b [D_\infty]$ with  Calabi symmetry, and a class $[\alpha]=\xi_2 [D_H]+q [D_\infty]$. Given charges $Z_X([\alpha])$, $Z_{D_\infty}([\alpha])$, $Z_P([\alpha])$ in $\mathbb C^*,$ there exists a way to lift the argument of each charge to $\mathbb R$ that depends only on $\xi_1,\xi_2, b, q, m,$ and $r$. By construction of the lifts it holds
$$\phi_X([\alpha])<\phi_{D_\infty}([\alpha])< \phi_X([\alpha])+1.$$
Furthermore, the class $[\alpha]$ admits  solution to the deformed Hermitian-Yang-Mills equation if and only if 
$$\phi_X([\alpha])<\phi_{P}([\alpha])< \phi_X([\alpha])+r+1$$
in the generic case, and 
$$\phi_X([\alpha])+1<\phi_{P}([\alpha])< \phi_X([\alpha])+r+1$$
in the non-generic case. 
\end{theorem}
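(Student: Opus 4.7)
The strategy is to reduce to Theorem \ref{Cauchystable}. Under Calabi symmetry the dHYM equation on $X_{r,m}$ becomes an ODE whose solution graph lies on the zero level set of $\mathrm{Im}\,w(z)$ for $w'(z) = e^{-i\hat\theta}(\xi_1 + i\xi_2 + z)^m z^r$ with $w(0) = 0$. The zero section $P$ corresponds to $z_1 = 0$, which is a critical point of $w$ of order $r$, and $D_\infty$ corresponds to a non-critical endpoint $z_2$ whose coordinates are determined by $b$ and $q$. Hence existence of a dHYM representative for $[\alpha]$ is equivalent to stability of $\vartheta = \{z_1, z_2\}$, and by Theorem \ref{Cauchystable} this stability is governed purely by the counting function $N$ evaluated at the endpoints. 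The generic/non-generic distinction at $z_1 = 0$ is dictated algebraically by whether $\xi_1 + i\xi_2$ sits on the imaginary axis, which is precisely whether another critical point of $w$ lies on the $y$-axis.

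The task is then to match the integer shifts of $N$ against the integer shifts of the grades $\phi_P, \phi_{D_\infty}$ relative to $\phi_X$. First I would compute each central charge under the Calabi ansatz by pushing \eqref{centralcharge} forward along $\pi\colon X_{r,m} \to \mathbb{P}^m$, expressing $Z_X, Z_{D_\infty}, Z_P$ as explicit polynomials in $\xi_1, \xi_2, b, q$ whose factorizations mirror that of $w'$. Next I would define the paths $Z_{D_\infty}(t)$ and $Z_P(t)$ by deforming the parameter $q$ to infinity along a canonical ray, verifying that these paths avoid the origin, wind counterclockwise, and limit onto the negative real axis, thereby fixing canonical lifts of the arguments and hence the values of $\phi_{D_\infty}, \phi_P$. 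The bound $\phi_X < \phi_{D_\infty} < \phi_X + 1$ then follows from a direct homotopy check: the charge of $D_\infty$ differs from that of $X$ by a single factor of $\omega$, so the two arguments cannot drift apart by more than $\pi$ along the chosen deformation.

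The central identification is that the Cauchy index defining $N(x_0 + iy_0)$ equals the winding of $\mathrm{arg}(w'(x_0 + it))$ as $t$ runs over $(-\infty, y_0]$, and this winding coincides, up to the fixed normalization from the paths above, with $\phi_P - \phi_X$ along a vertical segment connecting $z_1$ to $z_2$. Substituting into Theorem \ref{Cauchystable}(ii), the admissible set $\{N(z_1) - r, \ldots, N(z_1)\}$ translates to the open interval $\phi_X < \phi_P < \phi_X + r + 1$; substituting into case (iii), the shorter admissible set translates to $\phi_X + 1 < \phi_P < \phi_X + r + 1$, exactly matching the statement. The main obstacle is the bookkeeping of the branch lifts: I must verify that the canonical paths $Z_P(t), Z_{D_\infty}(t)$ avoid zero throughout the deformation, show that integer jumps of $N$ correspond precisely to integer differences of grades under this normalization, and check that the endpoint values forbidden in the non-generic case of Theorem \ref{Cauchystable} are exactly those excluded by the second inequality.
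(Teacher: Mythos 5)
Your overall strategy is correct and matches the paper: reduce to Theorem~\ref{Cauchystable} via the Calabi ansatz, compute the charges $Z_X,Z_{D_\infty},Z_P$ explicitly, construct paths in $\mathbb C^*$ to fix lifts of the arguments, and translate the $N$-conditions into grade inequalities. However, several steps as you describe them would not go through.

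First, your algebraic characterization of the non-generic case is wrong. You claim it is equivalent to ``$\xi_1+i\xi_2$ on the imaginary axis,'' i.e.\ to the critical point $-\xi$ lying on the $y$-axis. But $\xi_1>0$ is forced by the K\"ahler condition, so this never happens, and you would wrongly conclude the non-generic case is vacuous. What actually determines genericity at $z_1=0$ is whether the tangent cone $T^0_{\mathcal D_0}=\{z\,|\,\mathrm{Re}(e^{-i\hat\theta}\xi^m z^r)=0\}$ contains a vertical line; that is a condition on $\arg(e^{-i\hat\theta}\xi^m)$ modulo $\pi$, hence it depends on $\hat\theta$ (and so on $b,q$) as well as $\arg\xi$, $m$, $r$. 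Second, your construction of $Z_P(t)$ ``by deforming the parameter $q$ to infinity'' cannot work: the paper computes $Z_P([\alpha])=-(-i)^m\xi^m$, which is independent of $q$, so varying $q$ gives a constant path. The correct path (equation \eqref{ZPlift}) moves $z_1$ up the imaginary axis and divides by $t^r$ to strip off the order-$r$ zero of $w'$ at the origin; this rescaling and the $(-i)^r$ rotation are exactly what make the path start at $Z_P([\alpha])$ and wind correctly.

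Third, your justification of $\phi_X<\phi_{D_\infty}<\phi_X+1$ — that the charges ``differ by a single factor of $\omega$'' and so ``cannot drift apart by more than $\pi$'' — misses the actual mechanism. Both $Z_{D_\infty}(t)$ and the lifted $Z_X$ can wind arbitrarily many times; the bound holds only because the lift of $\arg(Z_X([\alpha]))$ is itself constructed from the same Cauchy index ${\rm ind}_q^\infty(R_b)$, via $\hat\Theta=\hat\theta_{top}-{\rm ind}_q^\infty(R_b)\pi$, so the two lifts are coordinated by design. This is the one genuinely novel ingredient of the proof, and without it the translation from $|N(z_1)-N(z_2)|$ to grade differences is unjustified. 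Your final paragraph asserts that translation but does not explain how the region $A_{m+r+1-{\rm ind}_q^\infty(R_b)}$ containing $z_2$, the count ${\rm ind}_0^\infty(R_0)$ of $\mathcal D_0$ curves crossing the positive $y$-axis, and the $r$ (or $r+1$) branches emanating from the origin fit together to give precisely the open intervals in the theorem. In particular, the statement that the Cauchy index ``coincides with $\phi_P-\phi_X$ along a vertical segment connecting $z_1$ to $z_2$'' is not meaningful — $z_1$ and $z_2$ lie on different vertical lines, and the two grades are controlled by Cauchy indices over two different vertical rays ($x=0$ and $x=b$), linked only through the construction of $\hat\Theta$.
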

Note  that when $r=0$, the non-generic case corresponds to a single curve in $\mathcal C_0$ that passes through the origin and achieves vertical slope there. Thus  no solution exists, as evidenced by the above inequality. Interestingly,  the behavior of the charges of  other subvarieties of $X_{r,m}$ are not needed for our proof of existence. We plan to investigate the reasons behind this  in future work. Also, we remark that when $r=0$, the space $X_{0,m}$ is the blowup of $\mathbb P^{m+1}$ at one point, a case the author, along with N. Sheu, considered separately in \cite{JS}. Additionally, N. Sheu considered the special case of $X_{1,1}$ in \cite{Sheu}.

The paper is organized as follows. In Section \ref{prelimsection} we prove some background results on the analytic structure of the level curves $\mathcal C_0$ and $\mathcal D_0$ which we will need  for later results. In Section \ref{stabsection} we introduce the Cauchy index, define our counting function $N$, and prove Theorem \ref{Cauchystable}. In Section \ref{functionspace} we define and analyze our function space $\mathcal M_\vartheta$, as well as write down the geodesic equation. In Section \ref{KNsection} we show our Kempf-Ness functional $\mathcal J$ is convex along geodesics, and prove Theorem \ref{functionalstability}. Finally, in Section \ref{DHYMsection} we turn to our projective bundles $X_{r,m}$ and prove Theorem \ref{mainCalabi}.

{\bf Acknowledgements.} The author would like to thank Tristan C. Collins and Eugene Gorsky for some helpful discussion, in addition to S.-T. Yau for his continued support and encouragement. This work was funded in part by a Simons collaboration grant.

\section{Preliminary results}
\label{prelimsection}

Fix $v:\mathbb C\rightarrow \mathbb R$, an entire  harmonic polynomial of degree $n$. By adding a constant to $v$ it suffices to consider Question \ref{mainquestion} on the zero level set $\mathcal C_0$. Choose a harmonic conjugate $u $ and corresponding holomorphic function $w(z) =u(x,y) +i v(x,y) $ on $\mathbb C$. Let ${\mathcal H}_x:=\{z\in\mathbb C\,|\, {\rm Re}(z)>0\}$ denote the right half plane, and let $Z$ be the set of critical points of $w$, that is $Z=\{z_0\in\mathbb C\,|\, w'(z_0)=0\}$. We assume that  $Z\cap{\mathcal H}_x=\varnothing$, so $w$ has no critical points on the right half plane. This assumption will be quite useful to us for understanding where the level sets of $v$ achieve vertical slope.

Now, any level set $\mathcal C_m$ has vertical slope at a point $z_0$  only if $v_y(z_0)=0$. By the Cauchy-Riemann equations $w'(z)=v_y+iv_x$, so we see $\mathcal C_m$ has vertical slope only where it intersects $$\mathcal D_0:=\{z\in\mathbb C\,|\, {\rm Re}(w'(z))=0\}.$$  As a result understanding the set $\mathcal D_0$ plays an important role in answering Question \ref{mainquestion}. The following lemma, applied to $w'(z)$, will help give us a much clearer picture of what $\mathcal D_0$ can look like.

\begin{lemma}
\label{vertical}
Let $P(z)=\beta\prod_k (z-\xi_k)^{d_k}$ be a holomorphic polynomial, where $\beta$ and $\xi_k$ are complex numbers. Assume that all the roots $\xi_k$ have non positive real part.  Then the level set $ \{z\,|\, {\rm Re}(P(z))=0\}$ does not attain vertical slope at any point in   ${\mathcal H}_x $.
\end{lemma}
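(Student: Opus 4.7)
The plan is to translate the geometric condition ``vertical slope'' into two analytic equations on $P$ and $P'$, then use the logarithmic derivative $P'/P$ to derive a contradiction from the hypothesis on the roots $\xi_k$.

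First I would set $U={\rm Re}(P)$ and use the Cauchy--Riemann identities $U_x={\rm Re}(P')$ and $U_y=-{\rm Im}(P')$ to see that a point $z_0$ on the level curve $\{U=0\}$ has vertical tangent precisely when
\be
\label{verticalcond}
{\rm Re}(P(z_0))=0 \qquad \text{and} \qquad {\rm Im}(P'(z_0))=0. \nonumber
\ee
Now suppose for contradiction that some $z_0\in{\mathcal H}_x$ satisfies both conditions. Since each root $\xi_k$ has ${\rm Re}(\xi_k)\le 0$ and $z_0$ has positive real part, $z_0$ is not a root of $P$, so $P(z_0)\neq 0$ and the quotient $P'(z_0)/P(z_0)$ is well defined. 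By \eqref{verticalcond}, $P(z_0)$ is purely imaginary and nonzero while $P'(z_0)$ is purely real (possibly zero), which forces
\be
{\rm Re}\!\left(\frac{P'(z_0)}{P(z_0)}\right)=0.\nonumber
\ee

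On the other hand, expanding the logarithmic derivative using the product form of $P$ gives
\be
\frac{P'(z_0)}{P(z_0)}=\sum_k \frac{d_k}{z_0-\xi_k},\nonumber
\ee
and for each $k$ the quantity ${\rm Re}\bigl(1/(z_0-\xi_k)\bigr)={\rm Re}(z_0-\xi_k)/|z_0-\xi_k|^2$ is strictly positive because ${\rm Re}(z_0)>0$ while ${\rm Re}(\xi_k)\le 0$. Since each multiplicity $d_k$ is a positive integer, summing produces ${\rm Re}(P'(z_0)/P(z_0))>0$, contradicting the previous display. Hence no such $z_0$ exists, and $\{{\rm Re}(P)=0\}$ never has a vertical tangent on ${\mathcal H}_x$.

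The only subtle point I anticipate is treating the case where the ``vertical slope'' point is also a critical point of $P$, but since this would force $P'/P=0$ at $z_0$ it is ruled out by the same positivity argument, so the logarithmic derivative strategy handles all cases uniformly. The rest is routine.
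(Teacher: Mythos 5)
Your proof is correct and follows essentially the same route as the paper's: characterize vertical tangency by ${\rm Re}(P(z_0))=0$, ${\rm Im}(P'(z_0))=0$, deduce that $P'(z_0)/P(z_0)$ is purely imaginary, expand the logarithmic derivative as $\sum_k d_k/(z_0-\xi_k)$, and observe that each summand has strictly positive real part when ${\rm Re}(z_0)>0\ge{\rm Re}(\xi_k)$. The only difference is that you explicitly flag that $z_0$ cannot be a root of $P$ and that critical points are automatically excluded, which the paper leaves implicit, but this is just a matter of exposition.
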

\begin{proof}
 A level set of ${\rm Re}(P )$ through a point $z_0$ has vertical slope   only if  $0=\frac{\partial}{\partial y} {\rm Re}(P(z_0))={\rm Re}(P'(z_0)i)=-{\rm Im}(P'(z_0))$. Thus if in addition $z_0$ lies on $\{z\,|\, {\rm Re}(P(z))=0\}$ we have ${\rm Re}(P(z_0))=0$ and ${\rm Im}(P'(z_0))=0.$ As a result the quotient $P'(z_0)/P(z_0)$ is purely imaginary.  

Now, away from the zeros of $P$ the above quotient can be expressed as
\be
\frac{P'(z)}{P(z)}=\sum_k\frac{d_k}{z-\xi_k}=\sum_k\frac{d_k(\bar z-\bar \xi_k)}{|z-\xi_k|^2}.\nonumber
\ee
Writing $z_0=x_0+iy_0$, and taking the real part at this point we see
\be
0={\rm Re}\left(\frac{P'(z_0)}{P(z_0)}\right)= \sum_k\frac{d_k(x_0-{\rm Re}( \xi_k))}{|z_0-\xi_k|^2}.\nonumber
\ee
However, by assumption ${\rm Re}( \xi_k)\leq0$ for all $k$. Thus if $x_0>0$ it is impossible for the above terms to sum to zero.
\end{proof}

Thus we can conclude  $\mathcal D_0$ does not achieve vertical slope in ${\mathcal H}_x $. Now, outside of a large enough ball $B_R(0)$, the level set  $\mathcal D_0$ looks like $2(n-1)$ curves asymptotic to $2(n-1)$ rays with angle $\frac{\pi}{n-1}$ between each ray. As a result, generically  the intersection $\mathcal D_0\cap {\mathcal H}_x$   is asymptotic to $n-1$ rays with angle $\frac{\pi}{n-1}$, unless one of the asymptotic lines from $\mathcal D_0$ has vertical slope (in this case  $\mathcal D_0\cap {\mathcal H}_x$  will be asymptotic to $n-2$ rays). Since the curves in $\mathcal D_0$ do not intersect themselves nor have vertical slope in ${\mathcal H}_x$, it follows that $\mathcal D_0\cap {\mathcal H}_x$  consists of $n-1$ distinct curves, which we label as $\gamma_1,...,\gamma_{n-1}$, numbering counter-clockwise from the negative $y$-axis. Here we assume the generic case of $n-1$ curves, since in the case that there are only $n-2$ curves the only difference is there is one less curve to label.  Note that  if $w'(z)$ has a zero  along the $y-$axis, then the closures $\overline{\gamma}_1,...,\overline{\gamma}_{n-1}$ may intersect there, with the number of curves in the intersection depending on the order of the zero.

As a result   partition ${\mathcal H}_x\backslash \mathcal D_0$ into   $n$ distinct connected components, with the boundaries between each component given by the curves $\gamma_k$. Numbering counter-clockwise from the negative $y$-axis we call these components $A_1,...,A_n$. We now introduce the following important result:
\begin{proposition}
\label{onecurveregion}
For each $k$, the intersection $A_k\cap \mathcal C_0$ consists of exactly one distinct curve.
\end{proposition}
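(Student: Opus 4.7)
The plan is to show that $\mathcal{C}_0 \cap A_k$ is the graph $y = f(x)$ of a single smooth function over an interval $J_k$, using two monotonicity facts as the backbone. The first fact is that $v$ is strictly monotonic along each curve $\gamma_j$, implying $|\mathcal{C}_0 \cap \gamma_j| \le 1$. Parameterizing $\gamma_j$ by arclength $s$ and using $v_y = {\rm Re}(w') \equiv 0$ on $\gamma_j$, one computes
\[
\frac{d}{ds} v(x(s), y(s)) \,=\, v_x \cdot \frac{dx}{ds}.
\]
Since $w' \neq 0$ in ${\mathcal H}_x$ while ${\rm Re}(w') = 0$ on $\gamma_j$, we get $v_x = {\rm Im}(w') \neq 0$ on $\gamma_j$; by Lemma \ref{vertical} applied to $w'$, $\gamma_j$ has no vertical tangent, so $dx/ds \neq 0$ as well. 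Both factors are continuous and nonvanishing on the connected curve $\gamma_j$, hence of constant sign, so the composition $V_j(s) := v(x(s),y(s))$ is strictly monotonic.

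The second fact is that each $A_k$ has connected vertical slices on which $v$ is monotonic in $y$. By Lemma \ref{vertical} each $\gamma_j$ is a graph $y = g_j(x)$, and since the $\gamma_j$ are pairwise disjoint, the CCW labeling corresponds on the common $x$-domain to the vertical ordering $g_1(x) < g_2(x) < \cdots < g_{n-1}(x)$; hence every nonempty slice $A_k \cap (\{x\} \times \mathbb{R})$ is a single interval $(g_{k-1}(x), g_k(x))$ (with the obvious modification $g_0 = -\infty$, $g_n = +\infty$ for the extreme regions). Since $A_k$ is a connected component of $\{v_y \neq 0\} \cap {\mathcal H}_x$, $v_y$ has constant sign on $A_k$; WLOG $v_y > 0$. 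Then $v(x, \cdot)$ is strictly increasing on each slice, so $\mathcal{C}_0 \cap A_k$ is the graph of a function $f$ over the set $J_k$ of $x$-values where such a zero exists. Non-emptiness of $J_k$ follows from the asymptotic picture already in play: each $A_k$ contains exactly one of the $n$ asymptotic rays of $\mathcal{C}_0$ in ${\mathcal H}_x$, which pins down a tail $(X_0, \infty) \subset J_k$.

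For connectedness of $J_k$, I argue by contradiction: suppose $x_1 < x_0 < x_2$ with $x_1, x_2 \in J_k$ and $x_0 \notin J_k$. Strict monotonicity in $y$ combined with the interior zero $f(x_i) \in (g_{k-1}(x_i), g_k(x_i))$ forces $v(x_i, g_{k-1}(x_i)) < 0 < v(x_i, g_k(x_i))$ for $i = 1, 2$. At $x_0$ the function $v(x_0, \cdot)$ is nonvanishing on the slice and of constant sign there, so either $V_{k-1}(x_0) \ge 0$ or $V_k(x_0) \le 0$, where $V_j(x) := v(x, g_j(x))$. Either way the intermediate value theorem produces two zeros of $V_{k-1}$ (resp.\ $V_k$) on the interval $[x_1, x_2]$, contradicting strict monotonicity of $v$ along $\gamma_{k-1}$ (resp.\ $\gamma_k$). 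The main obstacle I anticipate is the bookkeeping for the extreme regions $A_1$ and $A_n$, where one bounding $\gamma$ is replaced by part of the $y$-axis, and for the non-generic configurations in which some $\gamma_j$ fails to extend all the way to $x = 0$ (terminating at a zero of $w''$ inside ${\mathcal H}_x$); in those cases the vertical convexity statement and the definition of the boundary functions $V_j$ require minor case-by-case adjustments, but the core monotonicity arguments are unaffected.
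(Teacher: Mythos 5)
Your proof is correct and takes a genuinely different, cleaner route than the paper. The paper argues by contradiction on the level of curves: it supposes a second component $\Gamma_2 \subset A_k \cap \mathcal C_0$, invokes Boothby's theorem on branched regular curve families to conclude $\Gamma_1 \cap \Gamma_2 = \varnothing$, traces where $\Gamma_2$ must exit $A_k$, and then, depending on where $\Gamma_1$ and $\Gamma_2$ meet the boundary, applies Rolle's theorem or an auxiliary integral $L(t) = \int_{\sigma_t} v\, dy$ together with the IVT to manufacture a contradiction (with a separate paragraph for the non-generic case). Your proof instead isolates the two monotonicity facts that actually drive everything: strict monotonicity of $v$ along each $\gamma_j$ (which gives $|\mathcal C_0 \cap \gamma_j| \le 1$ and replaces both Boothby's theorem and the paper's critical-point argument for ``$Q$ cannot lie on $\gamma_{k-1}$''), and strict monotonicity of $v$ on vertical slices (which immediately makes $\mathcal C_0 \cap A_k$ the graph of a function $f$ over a set $J_k$). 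The connectedness of $J_k$ then falls out of a single IVT argument along a boundary curve $V_j(x)=v(x,g_j(x))$, rather than the paper's case tree. What you gain is a shorter, more uniform argument that doesn't split into generic/non-generic subcases at the outset; what you lose is the explicit topological picture of how a hypothetical second curve would have to behave, which the paper reuses later. Two small points to tidy up before the proof is airtight: (a) in the case $V_{k-1}(x_0)=0$ you do not actually get two distinct zeros, but strict monotonicity together with $V_{k-1}(x_1),V_{k-1}(x_2)<0$ still yields the contradiction, so state it that way; (b) your ``minor case-by-case adjustments'' for $A_1$, $A_n$ and the non-generic configuration should be written out — for the extreme regions one boundary is $y=\pm\infty$ rather than a curve $\gamma_j$, and you must check that the sign of $v(x_0,\cdot)$ on an unbounded slice forces the IVT argument onto the \emph{other} (finite) boundary curve, which it does since $v_y$ has constant sign and $v$ is a polynomial of degree $n$ in $y$. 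With those two details filled in, the argument stands on its own.
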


Before we prove this proposition, we first  elaborate on  the behavior of $\mathcal C_0$ for $|z|$ large. As implied by the discussion of  $\mathcal D_0$ above,  outside of a large ball, a holomorphic polynomial $w(z)$ is dominated by the leading order for $|z|$ large, and so the corresponding level set of the real or imaginary part will be asymptotic to $2n$ rays with angle $\frac{\pi}{n}$ between each ray. Here we explicitly find these asymptotic rays.

First, let $w(z)=\beta\prod_{k=1}^n (z-\xi_k)$, where $\beta$ is a fixed complex number and $\xi_k$ are the roots (here $\xi_k$ do not need to  be distinct). Multiplication by a nonzero constant will not affect $\mathcal C_0$, so it suffices to assume that $|\beta|=1$. The naive scaling $\lim_{r\rightarrow\infty}\frac1{r^n}w(rz)=\beta z^n$ gives a good   approximation for the behavior of $w(z)$ for $|z|$ large, however, because  all constant terms scale away, the level set $\{z\,|\, {\rm Im}(\beta z^n)=0\}$ will only be a translation of the asymptotic level set to $\mathcal C_0$. Specifically, for large $|z|$ the set $\mathcal C_0$ will be asymptotic to rays from the set $\{z\,|\, {\rm Im}(\beta (z-z_0)^n)=0\}$ for some fixed $z_0$, and we need to determine  this point $z_0=x_0+iy_0$.

Rotating the polynomial $\beta\prod_{k=1}^n (z-\xi_k)$ will not affect the point where the asymptotic rays emanate from, so in determining $z_0$ we instead look to the set  $\{z\,|\, {\rm Im}(\prod_{k=1}^n (z-\xi_k))=0\}$, which now contains a component asymptotic to a horizontal line. To determine this line, we look at the behavior of the  function $ \prod_{k=1}^n (z-\xi_k)$ for large $x$ along a horizontal line $y=y_0$. Specifically we have
\bea
 {\rm Im}\left(\prod_{k=1}^n (x+iy_0-\xi_k)\right)&=& x^{n-1} {\rm Im}\left(\sum_{k=1}^n (iy_0-\xi_k)\right)+O(x^{n-2})\nonumber\\
 &=&x^{n-1}\left(ny_0-\sum_{k=1}^n{\rm Im} (\xi_k)\right)+O(x^{n-2}).\nonumber
\eea
Here the highest order term has order ${n-1}$ since the $x^n$ term is purely real. For $x$ large this term dominates, and so the set where  ${\rm Im}(\prod_k (z-\xi_k))=0$ is asymptotic to the line $y=\frac1n\sum_{k=1}^n{\rm Im} (\xi_k)$, that is, $y_0$ is the average of the imaginary parts of all the roots of $w(z)$. Similarly it is not hard to see that $x_0$ needs to be average of the real parts of all the roots of $w(z)$. Thus $z_0$ needs to be the average of the roots of $w(z)$.

\begin{lemma}
Outside of a large enough ball, the level curves of $\mathcal C_0$ are asymptotic to the rays that make up the set $\{z\,|\, {\rm Im}(\beta (z-z_0)^n)=0\}$, where $z_0$ is the average of the roots of $w(z)$. Furthermore,  the level curves of $\mathcal D_0$ are asymptotic to the rays that make up the set $\{z\,|\, {\rm Re}(\beta (z-z_0)^{n-1})=0\}.$
\end{lemma}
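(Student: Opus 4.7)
The plan is to reduce everything to the centered polynomial by substituting $\tilde z = z - z_0$ and writing $\tilde w(\tilde z) := w(z_0 + \tilde z) = \beta \prod_k (\tilde z - \tilde \xi_k)$ with $\tilde\xi_k = \xi_k - z_0$. The defining property of $z_0$ as the centroid of the roots gives $\sum_k \tilde\xi_k = 0$, so the first elementary symmetric polynomial in the $\tilde\xi_k$ vanishes. By Vieta, this forces the coefficient of $\tilde z^{n-1}$ in $\tilde w(\tilde z)$ to vanish, yielding
\[
\tilde w(\tilde z) = \beta \tilde z^n + O(|\tilde z|^{n-2})\quad\text{as } |\tilde z|\to\infty,
\]
and hence $\tilde w'(\tilde z) = n\beta \tilde z^{n-1} + O(|\tilde z|^{n-3})$ as well, since differentiation preserves the size of the gap.

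Next I would pass to polar coordinates $\tilde z = r e^{i\theta}$. The equation $\Im(\tilde w(\tilde z)) = 0$ becomes $r^n \sin(n\theta + \arg\beta) + g(r,\theta) = 0$ with $|g(r,\theta)| \le C r^{n-2}$. Dividing by $r^n$ gives $\sin(n\theta + \arg\beta) = O(r^{-2})$, so every point of $\mathcal C_0$ with $r$ large must have $\theta$ within $O(r^{-2})$ of one of the $2n$ angles $\theta^*_j = (j\pi - \arg\beta)/n$, $j=0,\ldots,2n-1$, which are precisely the angular directions of the rays in $\{\Im(\beta(z-z_0)^n) = 0\}$. Near each $\theta^*_j$ the partial derivative in $\theta$ equals $n\cos(n\theta^*_j + \arg\beta) = \pm n \neq 0$, so the implicit function theorem applied in $\theta$ with $r$ as parameter produces, for all $r$ sufficiently large, a unique local branch $\theta_j(r)$ with $\theta_j(r) - \theta^*_j = O(r^{-2})$, giving a component of $\mathcal C_0$ asymptotic to the ray of angle $\theta^*_j$ emanating from $z_0$.

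To verify that every unbounded branch of $\mathcal C_0$ is accounted for, I would observe that on the circle $|\tilde z| = R$ with $R$ large, the smooth function $\theta \mapsto R^{-n}\Im(\tilde w(R e^{i\theta}))$ is a small perturbation (in any $C^k$ norm) of $\theta \mapsto \sin(n\theta + \arg\beta)$, so it has exactly $2n$ simple zeros on $[0,2\pi)$. Hence $\mathcal C_0 \cap \{|\tilde z| = R\}$ has exactly $2n$ points, each lying on one of the branches $\theta_j(r)$ already constructed, which establishes the bijection between unbounded components and asymptotic rays. The statement for $\mathcal D_0$ is obtained by running the identical argument on $\tilde w'(\tilde z)$, with $\Re$ in place of $\Im$ and exponent $n-1$ in place of $n$; the overall factor $n$ in the leading term is irrelevant to the zero set.

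The only non-obvious ingredient is the vanishing of the $\tilde z^{n-1}$ coefficient after the shift, which is what identifies $z_0$ as the correct translation point and, crucially, makes the perturbation of the leading term of order $r^{n-2}$ rather than $r^{n-1}$—exactly the size required for the error in $\sin(n\theta + \arg\beta)$ to be $O(r^{-2})$, so that the implicit function theorem applies uniformly for all large $r$. The main bookkeeping issue is matching branches to rays without double-counting or omission, and the circle-count perturbation argument above is the cleanest way to resolve it.
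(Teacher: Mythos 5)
Your proof is correct, and it follows a genuinely cleaner and more systematic route than the paper's. The paper never invokes Vieta's formulas; instead it rotates so $\beta=1$, restricts to a horizontal line $y=y_0$, expands $\Im\bigl(\prod_k(x+iy_0-\xi_k)\bigr)$ in powers of $x$, observes that the $x^{n-1}$ coefficient is $ny_0-\sum_k\Im(\xi_k)$, and deduces by inspection that $y_0$ must be the average of the imaginary parts (with a parallel, unspelled-out argument for $x_0$); it then repeats an analogous explicit coefficient computation directly on $P'(z)$ for the $\mathcal D_0$ case. Your observation that centering at the centroid kills the degree-$(n-1)$ coefficient via $e_1=0$ gets all of this at one stroke, and, crucially, gives you the $\mathcal D_0$ statement for free, since differentiating a polynomial whose $\tilde z^{n-1}$ coefficient vanishes leaves the $\tilde z^{n-2}$ coefficient of the derivative zero as well --- the paper redoes a full expansion rather than noticing this. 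Your second half (polar coordinates, implicit function theorem for branch-tracking, counting the $2n$ simple zeros on a large circle to establish the branch-to-ray bijection) is also more than the paper offers: the paper essentially stops at identifying the translate $z_0$ and treats the ray-asymptotics as inherited from the earlier informal discussion of $\frac1{r^n}w(rz)\to\beta z^n$. In short, both arguments arrive at the same place, but yours replaces two ad hoc coefficient computations with one algebraic identity and fills in the analytic details of the asymptotic matching that the paper leaves implicit. The one point worth tightening in a written version is the IFT step: rather than appealing to the IFT ``with $r$ as parameter'' along an unbounded ray, you should substitute $s=1/r$ (or $s=1/r^2$), check that $F(s,\theta):=r^{-n}\Im(\tilde w(re^{i\theta}))$ extends $C^1$ to $s=0$ with $F(0,\theta)=\sin(n\theta+\arg\beta)$, and apply the IFT at $(0,\theta^*_j)$; the circle-counting argument then closes the bijection exactly as you describe.
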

\begin{proof}
We have already demonstrated that the level curves of $\mathcal C_0$ are asymptotic to the rays that make up the set $\{z\,|\, {\rm Im}(\beta (z-z_0)^n)=0\}$, thus we only need to show that the asymptotic rays corresponding to the derivate $w'(z)$ again emanate from the same point $z_0$.

As above, rotating $w(z)=\beta\prod_{k=1}^n (z-\xi_k)$ by $-{\rm arg}(\beta)$ will not affect $z_0$, so we instead consider the polynomial $P(z)=\prod_{k=1}^n (z-\xi_k)$. Taking the derivate gives $$P'(z)=\sum_{k=1}^n(z-\xi_1)\cdots(z-\xi_{k-1})(z-\xi_{k+1})\cdots (z-\xi_n).$$
Fixing $y$ at $y_0$, we again look at the behavior of the imaginary part of the above polynomial for large $x$. In particular 
\bea
 {\rm Im}\left(P'(x+iy_0)\right)&=& x^{n-2} {\rm Im}\left(\sum_{k=1}^n\sum_{j\neq k} (iy_0-\xi_j)\right)+O(x^{n-3})\nonumber\\
 &=&x^{n-2}\sum_{k=1}^n\left((n-1)y_0-\sum_{j\neq k}{\rm Im} (\xi_j)\right)+O(x^{n-3})\nonumber\\
 &=&x^{n-2} \left(n(n-1)y_0-(n-1)\sum_{k=1}^n{\rm Im} (\xi_k)\right)+O(x^{n-3}).\nonumber
\eea
Thus,  we see the level set is asymptotic to the line $y=\frac1n\sum_{k=1}^n{\rm Im} (\xi_k)$, and can again conclude $y_0$ is the average of the imaginary parts of the roots of $w(z)$. Similarly we conclude  $x_0=\frac1n\sum_{k=1}^n{\rm Re} (\xi_k)$. As a result the level set $\mathcal D_0$ is asymptotic to the  rays from the set $\{z\,|\, {\rm Re}(\beta (z-z_0)^{n-1})=0\},$ completing the proof of the lemma.

\end{proof}

With the above Lemma, we introduce the following notation and terminology. Given $w(z)=\beta\prod_{k=1}^n (z-\xi_k)$ we define 
$$T_{\mathcal C_0}^\infty:=\{z\,|\, {\rm Im}(\beta (z-z_0)^n)=0\}$$
to be the {\it tangent cone} to $\mathcal C_0$ at infinity. Again $z_0$ is the average of the roots of $w(z)$. Similarly let 
$$T_{\mathcal D_0}^\infty:=\{z\,|\, {\rm Re}(\beta (z-z_0)^{n-1})=0\}$$
 be the {\it tangent cone} to $\mathcal D_0$ at infinity. Additionally, we say a tangent cone is {\it non-generic} if it  contains a line of vertical slope. Otherwise we say it is {\it generic}.
 
 \begin{lemma}
 \label{genericlemma}
 $T_{\mathcal C_0}^\infty$ is generic if and only if $T_{\mathcal D_0}^\infty$ is.
 \end{lemma}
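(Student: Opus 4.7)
The plan is to reduce both genericity conditions to a single algebraic condition on the leading coefficient $\beta$ and the degree $n$, and observe that it is the same condition in both cases. The key observation is that each tangent cone, being the zero set of the real (resp.\ imaginary) part of a homogeneous polynomial of degree $n-1$ (resp.\ $n$) in $z-z_0$, is a union of $n-1$ (resp.\ $n$) lines through $z_0$, and the condition of containing a vertical line is a simple congruence on $\arg(\beta)$ modulo $\pi$.

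First I would substitute the vertical ray $z = z_0 + it$, $t \in \mathbb{R}$, into the defining equations of $T_{\mathcal{C}_0}^\infty$ and $T_{\mathcal{D}_0}^\infty$. Using $(it)^k = i^k t^k$, the condition that $T_{\mathcal{C}_0}^\infty$ contains the vertical line becomes
\[
\mathrm{Im}(\beta i^n t^n) = t^n \, \mathrm{Im}(\beta i^n) = 0 \text{ for all } t,
\]
i.e.\ $\beta i^n \in \mathbb{R}$. Analogously, the condition that $T_{\mathcal{D}_0}^\infty$ contains the vertical line becomes
\[
\mathrm{Re}(\beta i^{n-1} t^{n-1}) = t^{n-1} \, \mathrm{Re}(\beta i^{n-1}) = 0 \text{ for all } t,
\]
i.e.\ $\beta i^{n-1} \in i\mathbb{R}$.

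The final step is simply the remark that multiplication by $i$ interchanges $\mathbb{R}$ and $i\mathbb{R}$: one has $\beta i^{n-1} \in i\mathbb{R}$ if and only if $i \cdot \beta i^{n-1} = \beta i^n \in \mathbb{R}$. Hence both non-genericity conditions collapse to the same algebraic identity $\beta i^n \in \mathbb{R}$, equivalently $\arg(\beta) \equiv n\pi/2 \pmod{\pi}$, and the lemma follows. There is no real obstacle here; the only small care needed is to check that the $n$ lines cut out by $\mathrm{Im}(\beta(z-z_0)^n) = 0$ and the $n-1$ lines cut out by $\mathrm{Re}(\beta(z-z_0)^{n-1}) = 0$ really do contain at most one vertical line each, so that checking vertical slope reduces to checking the vertical direction at $z_0$ — but this is immediate from the homogeneity of the defining polynomials about $z_0$.
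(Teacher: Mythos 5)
Your proof is correct and uses essentially the same idea as the paper: both reduce each non-genericity condition to the same congruence on $\arg(\beta)$ modulo $\pi$. The paper phrases it via polar coordinates and the identity $\cos((n-1)\theta+\phi_0)=\cos(m\pi)\cos\theta$, while you substitute $z=z_0+it$ directly and use that multiplication by $i$ swaps $\mathbb{R}$ and $i\mathbb{R}$; these are cosmetically different renderings of the same elementary computation.
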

 \begin{proof} Without loss of generality set $z_0=0$. Suppose  $T_{\mathcal C_0}^\infty$ contains a line of vertical slope. Let $z=re^{i\theta}$ and $\beta=e^{i\phi_0}$. Then a point $z$ lies in $T_{\mathcal C_0}^\infty$  if and only if $n\theta+\phi_0=m\pi$ for some $m\in\mathbb Z$. Yet this implies
$$\frac1{r}{\rm Re}(\beta z^{n-1})={\rm cos}((n-1)\theta+\phi_0)={\rm cos}(m\pi-\theta)={\rm cos}(m\pi){\rm cos}(\theta).$$
Thus if $z$ lying on $T_{\mathcal C_0}^\infty$  also lies on the $y-$axis, ${\rm cos}(\theta)=0$ and thus $z$ lies on $T_{\mathcal D_0}^\infty$ as well. Similarly one can argue that a point $z$ lying on both   $T_{\mathcal D_0}^\infty$ and the $y$-axis must also lie on $T_{\mathcal C_0}^\infty$. 
\end{proof}

We now have the necessary background material to prove Proposition \ref{onecurveregion}. 
\begin{proof}[Proof of Propostion \ref{onecurveregion}]

Given the above explicit description of the tangent cones, we now know asymptotic rays for $\mathcal C_0$ alternate with those from $\mathcal D_0$, and that  the non-generic cases coincide. We first prove the generic case. The alternating condition tells us that each set from $A_1,...,A_n$ must contain one asymptotic ray from $\mathcal C_0$, and therefore each $A_k$ must contain at least a portion of a curve from $\mathcal C_0$. Denote this curve by $\Gamma_1$, which is asymptotic to a ray from $T_{\mathcal C_0}^\infty$.

Via contradiction, suppose a region $A_k$ contains another distinct curve that lies on  $\mathcal C_0$, denoted  $\Gamma_2$.  We  turn to Theorem 3.1-1 from \cite{B},  which states that a family $F$ of level curves of a harmonic function $v$ (in a simply connected  domain $D$) is a branched, regular curve
 family filling $D$,  where the critical points of $v $ are the branch points of $F$. Thus we know that $\Gamma_1\cap\Gamma_2=\varnothing$ in $A_k$, since otherwise there would be a branch point, and we know there are no critical points in ${\mathcal H}_x$. 
 
Note that neither $\Gamma_1$ nor $\Gamma_2$ can attain vertical slope at a point in $A_k$, since otherwise a curve from $\mathcal D_0$ would have to pass through this point, which is impossible because $A_k$ is bounded below by $\gamma_{k-1}$ and above by $\gamma_k$, with no curves from $\mathcal D_0$ in between.  In fact, this implies that the sign of $v_y$ does not change in $A_k$. Without loss of generality, assume that $v_y>0$ in $A_k$.

In the generic case there is only one asymptotic ray from $T_{\mathcal C_0}^\infty$ in $A_k$, and so $\Gamma_2$ must leave $A_k$ inside of some large ball $B_R(0)$. It can not leave $A_k$ on two points on the $y$-axis, since otherwise it would have a point of vertical slope in between. Thus    $\Gamma_2$  must leave  $A_k$ through $\gamma_{k-1}$ or $\gamma_{k}$. Denote this point of intersection by  $P=c+id$. We now consider several cases. First assume the vertical line $x=c$ intersects $\Gamma_1$ at a point $Q=c+id'\in \overline{A_k}$. Because both $P$ and $Q$ lie on $\mathcal C_0$, we know $v(P)=v(Q)=0$. Yet by Rolle's Theorem this implies there is a point $\zeta$ on the   interior of the segment $\overline{PQ}$ which satisfies $v_y(\zeta)=0$, contradicting the fact that $v_y>0$ in $A_k$.

Thus we can assume that the  vertical line $x=c$ does not intersect $\Gamma_1$, and so $\Gamma_1$ must leave $A_k$ at some point $Q=c'+id'$, with $c'>c$. Without loss of generality, assume that $\Gamma_2$ leaves $A_k$ at $P$ along the lower boundary $\gamma_{k-1}$. First we argue that $Q$ can not lie on $\gamma_{k-1}$ as well. To see this, suppose not, and use the fact that $v(P)=v(Q)=0$ implies the restriction of $v$ to $\gamma_{k-1}$ has a critical point, i.e. there exists a point $\zeta$ where $\gamma_{k-1}'\cdot \nabla v(\zeta)=0$. Because $\gamma_{k-1}$ does not achieve vertical slope, we can write $\gamma_{k-1}'=\kappa_1 \frac{\partial}{\partial x}+\kappa_2\frac{\partial}{\partial y}$ with $\kappa_1\neq0$. This implies
$$0=\gamma_{k-1}'\cdot \nabla v(\zeta)=\kappa_1 v_x(\zeta)+\kappa_2 v_y(\zeta)=\kappa_1 v_x(\zeta)$$
since $v_y=0$ along $\gamma_{k-1}$. Thus $v_x(\zeta)=0$, making $\zeta$ a critical point, a contradiction. We conclude $Q$ must lie on $\gamma_k$.

Now, let $\ti Q$ be the point where $\gamma_{k-1}$ intersects the line $x=c'$. Also, let $[t_1,t_2]$ be the time interval that parametrizes  $\gamma_{k-1}$ between $P$ and $\tilde Q$, i.e. $\gamma_{k-1}(t_1)=P$ and $\gamma_{k-1}(t_2)=\ti Q$. For each $t\in[t_1,t_2]$, denote by $\sigma_t$ the curve which lies on the segment  $\{x={\rm Re}(\gamma_{k-1}(t))\}\cap A_k.$ In particular, $\sigma_{t_2}$ represents the vertical segment connecting $\ti Q$ with $Q$. Because $v_y>0$ on $A_k$, and $v(P)=0$, it follows that $v>0$ on $\sigma_{t_1}$. Furthermore, because $v_y>0$ and $A_k$ and $v(Q)=0$, we must have $v<0$ on $\sigma_{t_2}$. Therefore, if we consider the function $L:[t_1, t_2]\rightarrow\mathbb R$ defined by
$$L(t)=\int_{\sigma_{t}}v(t,y)dy,$$
we have $L(t_1)>0$ and $L(t_2)<0$. Since $L(t)$ is continuous there exists a $t_0\in(t_1,t_2)$ with $L(t_0)=0$. Yet if this integral is zero, there must be a point $\zeta$ on the vertical segment $\sigma_{t_0}$ with $v(\zeta)=0$, so $\zeta\in\mathcal C_0$. Thus there exists some other curve $\Gamma_3$ through $\zeta\in A_k$ which lies on $\mathcal C_0$. However $\Gamma_3$ cannot pass over the lines $x=c$ and $x=c'$ in $A_k$, without creating a contradiction as before. Furthermore, it cannot pass over $\gamma_{k-1}$ or $\gamma_k$ without creating two points along either curve with $v=0$, again a contraction. Finally, we know $\Gamma_3$ can not be a closed curve by the maximum principle. Thus in all cases we get a contradiction, and so $A_k$ can not have two curves from $\mathcal C_0$. Certainly, the above arguments will also rule out the possibility that $A_k$ has more than two distinct curves from $\mathcal C_0$. We conclude that in the generic case there is exactly one curve from $\mathcal C_0$ in $A_k$.

We now consider the non-generic case. Here we have $n-1$ regions $A_1,...,A_{n-1}$. The main difference we encounter in this case is that while $A_1$ and $A_{n-1}$ still only have one asymptotic ray from $T_{\mathcal C_0}^\infty$ within the region, it is possible that outside of a large ball, $\mathcal C_0\cap B_R(0)^c\cap A_{\ell}$ (for $\ell=1,n-1$) could consist of two curves $\Gamma_1$ and $\Gamma_2$, with $\Gamma_1$ asymptotic to the $y-$axis and $\Gamma_2$ asymptotic to the ray with angle $\frac \pi n$ off the $y$-axis. For simplicity we work in $A_1$. Suppose that $\Gamma_1$  and $\Gamma_2$ are distinct curves. If $\Gamma_1$ is parameterized so $t\rightarrow-\infty$ corresponds to $\Gamma_1$ approaching the negative $y$-axis, it must then satisfy $\Gamma_1' \cdot\frac{\partial}{\partial x}>0$. That is, as $t$ increases $\Gamma_1$ heads in the positive $x$ direction. At no point can it turn away from this direction and head back to the $y-$axis without achieving positive slope. So if $\Gamma_1$ is distinct from $\Gamma_2$ it must leave $A_1$ through the boundary $\gamma_1$. However, we now arrive at a contradiction since $\Gamma_2$ can not cross $\Gamma_1$ without creating a critical point, nor can $\Gamma_2$ cross $\gamma_1$ without creating two points on $\gamma_1$ where $v=0$. Thus in this case $A_1$ can only admit one curve from $\mathcal C_0$, which starts off asymptotic to the $y$-axis, and then turns back down and heads out along the other asymptotic ray with angle $\frac \pi n$ off the $y$-axis. Note that it is possible for $A_1$ to have a curve which is  asymptotic ray with angle $\frac \pi n$ that then crosses $\gamma_1$, with no curves asymptotic to the $y$-axis, but in this case again $A_1$ only has one curve from $\mathcal C_0$. The rest of the regions also contain only one curve by arguing as before. This completes the proof of the proposition.

\end{proof}

In the arguments to follow we will need to understand the behavior of $\mathcal D_0$ and $\mathcal C_0$ near critical points on the $y-$axis. To this end we introduce the notion of a tangent cone to a critical point, which is a similar construction to the above tangent cone at infinity.

Let $iy_0$  be a critical point of order $k$. Write
 $$w'(z)=\beta(z-iy_0)^k\prod_\ell (z-\xi_\ell)^{d_\ell},$$ where $\beta$ is a fixed complex number and $\xi_\ell$ are the remaining critical points. Translating by $iy_0$,   scaling via $\frac 1 {r^k} w'(rz+iy_0)$, and taking the limit as $r\rightarrow 0$ allows us to write our rescaled level set as
$$T^{iy_0}_{\mathcal D_0}:=\{z\,|\, {\rm Re}(\beta z^k\prod_\ell (iy_0-\xi_\ell)^{d_\ell})=0\},$$
which we call the {\it tangent cone} to $\mathcal D_0$ at  $iy_0$. Notice $T^{iy_0}_{\mathcal D_0}$ consists of $k$ lines through the origin with angle $\frac{\pi}k$ between each line. For simplicity denote the constant $\beta  \prod_\ell (iy_0-\xi_\ell)^{d_\ell}$ by $\ti \beta$.  There are two types of critical points that can occur. Similar to the definition above,   we call a critical point $iy_0$ {\it non-generic} if $T^{iy_0}_{\mathcal D_0}$ contains a line of vertical slope. In this case $T^{iy_0}_{\mathcal D_0}$ can be expressed as $\{z\,|\, {\rm Re}( z^k )=0\}$ when $k$ is odd and $\{z\,|\, {\rm Re}(i z^k )=0\}$ when $k$ is even. When $T^{iy_0}_{\mathcal D_0}$ does not contain  a line of vertical slope we call $iy_0$ a {\it generic} critical point.  

Suppose    $iy_0$ is a critical point of order $k$ that also lies on   $\mathcal C_0$. Thus ${\rm Im}(w(iy_0))=0$. Even though ${\rm Re}(w(iy_0))$ may not be zero, shifting $w$ by $-{\rm Re}(w(iy_0))$ does not change the level set $\mathcal C_0$ (and also does not affect $\mathcal D_0$), so if we want to analyze the local  behavior of $\mathcal C_0$ near $iy_0$ we can assume without loss of generality that $iy_0$ is also a zero of $w$, and therefore a zero of order $k+1$. Furthermore, it is easy to check that if the tangent cone to $\mathcal D_0$ at $iy_0$ is given by $T^{iy_0}_{\mathcal D_0}= \{z\,|\, {\rm Re}(\ti\beta z^{k})=0\}$ as above, then then the tangent cone to $\mathcal C_0$ is given by
$$T^{iy_0}_{\mathcal C_0}= \{z\,|\, {\rm Re}(\frac1{k+1}\ti\beta z^{k+1})=0\}.$$
In particular, the tangent cone to $w'(z)$ can be understood by taking the derivative of the tangent cone to $w(z)$ in this case. Thus, the exact proof of Lemma  \ref{genericlemma} allows us to conclude: 
\begin{lemma} 
 Let $iy_0$ be a critical point of $w(z)$ that also lies on $\mathcal C_0$. Then $T_{\mathcal C_0}^{iy_0}$ is generic if and only if $T_{\mathcal D_0}^{iy_0}$ is.
 \end{lemma}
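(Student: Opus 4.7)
The plan is to reduce this to the computation already performed in the proof of Lemma \ref{genericlemma}. As noted in the excerpt, after subtracting a constant we may assume $iy_0$ is a zero of $w$ of order $k+1$, so by the formulae just displayed we have
$$T^{iy_0}_{\mathcal D_0} = \{z \,|\, {\rm Re}(\tilde\beta z^k) = 0\}, \qquad T^{iy_0}_{\mathcal C_0} = \left\{z \,\Big|\, {\rm Re}\bigl(\tfrac{1}{k+1}\tilde\beta z^{k+1}\bigr) = 0\right\},$$
where $\tilde\beta = \beta \prod_\ell(iy_0-\xi_\ell)^{d_\ell}$. Since rescaling by a positive real does not affect the zero set, I may divide out $1/(k+1)$ and, as before, assume $|\tilde\beta|=1$ by writing $\tilde\beta = e^{i\phi_0}$. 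Recall also that both cones are centered at $iy_0$, so the requirement that a cone contain a vertical line means, after the implicit translation, that it contain a point $z = re^{i\theta}$ on the imaginary axis, i.e.\ with $\cos\theta = 0$.

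Next I would run the identical trigonometric identity used in Lemma \ref{genericlemma}. Writing $z = re^{i\theta}$, a point lies on $T^{iy_0}_{\mathcal C_0}$ iff $(k+1)\theta + \phi_0 = m\pi$ for some $m \in \mathbb{Z}$, and on $T^{iy_0}_{\mathcal D_0}$ iff $k\theta + \phi_0 = m'\pi$. Suppose first that $T^{iy_0}_{\mathcal C_0}$ contains a vertical line, so there is a $\theta$ with $\cos\theta = 0$ and $(k+1)\theta + \phi_0 = m\pi$. Then $k\theta + \phi_0 = m\pi - \theta$, so
$${\rm Re}(\tilde\beta z^k) = r^k\cos(k\theta + \phi_0) = r^k\cos(m\pi - \theta) = r^k\cos(m\pi)\cos(\theta) = 0,$$
showing this same vertical line also lies in $T^{iy_0}_{\mathcal D_0}$. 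Conversely, if $T^{iy_0}_{\mathcal D_0}$ contains a vertical line, there is a $\theta$ with $\cos\theta = 0$ and $k\theta + \phi_0 = m'\pi$. Then $(k+1)\theta + \phi_0 = m'\pi + \theta$, and
$${\rm Re}\bigl(\tilde\beta z^{k+1}\bigr) = r^{k+1}\cos(m'\pi + \theta) = r^{k+1}\cos(m'\pi)\cos(\theta) = 0,$$
so that line lies in $T^{iy_0}_{\mathcal C_0}$ as well.

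These two implications together give the biconditional. There is no real obstacle here: the only subtlety is recognizing that the algebraic relation between the tangent cones of $\mathcal C_0$ and $\mathcal D_0$ at a critical point on the $y$-axis is formally identical to the relation between the tangent cones at infinity exploited in Lemma \ref{genericlemma}, namely that differentiating drops the exponent from $k+1$ to $k$ and preserves the leading coefficient up to a harmless positive factor. Once this reduction is made explicit, the argument is a one-line application of $\cos(m\pi \pm \theta) = \pm\cos(m\pi)\cos\theta$.
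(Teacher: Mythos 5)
Your approach is exactly the paper's: its entire proof is the sentence ``the exact proof of Lemma \ref{genericlemma} allows us to conclude,'' and you have carried out that adaptation with the pair of exponents $(n,n-1)$ replaced by $(k+1,k)$. The forward direction is correct as written.

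One thing to be careful about: the paper's displayed formula $T^{iy_0}_{\mathcal C_0}=\{z\,|\,{\rm Re}(\tfrac{1}{k+1}\tilde\beta z^{k+1})=0\}$ is a typo and should read ${\rm Im}$, not ${\rm Re}$, since $\mathcal C_0$ is the zero set of $v={\rm Im}(w)$ (with ${\rm Re}$ the lemma is actually false, as $w(z)=z^2$ shows). Your membership condition $(k+1)\theta+\phi_0=m\pi$ silently fixes this, since that is the ${\rm Im}$ condition, whereas the ${\rm Re}$ condition would be $(m+\tfrac12)\pi$. However, $T^{iy_0}_{\mathcal D_0}$ genuinely is the ${\rm Re}$-zero set, so membership there is $k\theta+\phi_0=(m'+\tfrac12)\pi$, not $m'\pi$ as you wrote. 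This slip does not affect your forward direction, where you never invoke that condition and instead compute ${\rm Re}(\tilde\beta z^k)$ directly, but it does break the converse: you start from the wrong hypothesis ($k\theta+\phi_0=m'\pi$) and verify the wrong target (${\rm Re}(\tilde\beta z^{k+1})=0$ rather than ${\rm Im}$), so as stated the converse does not prove the implication between the actual cones. The fix is immediate: from $k\theta+\phi_0=(m'+\tfrac12)\pi$ and $\cos\theta=0$,
$$ {\rm Im}(\tilde\beta z^{k+1}) = r^{k+1}\sin\bigl((m'+\tfrac12)\pi+\theta\bigr)=r^{k+1}\cos(m'\pi+\theta)=r^{k+1}\cos(m'\pi)\cos\theta=0, $$
so the vertical line lies in $T^{iy_0}_{\mathcal C_0}$. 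With the two characterizations corrected, your argument is precisely the paper's.
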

We also need the following result:
\begin{lemma}
\label{curveinlemma}
Suppose $iy_0$ is a non-generic critical point of order $k$. Then there exists a small $\delta>0$ such that  $ \mathcal D_0\cap B_{\delta}(iy_0)\cap {\mathcal H}_x$ consists of $k+1$ distinct curves that converge to $iy_0$. In other words, both curves in $\mathcal D_0$ emanating from  $iy_0$ with vertical slope bend into ${\mathcal H}_x$.  Furthermore, if $iy_0$ lies on $\mathcal C_0$, then by making $\delta$ smaller, if necessary, $ \mathcal C_0\cap B_{\delta}(iy_0)\cap {\mathcal H}_x$ consists of $k+2$ distinct curves that converge to $iy_0$.
\end{lemma}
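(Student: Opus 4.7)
My plan is to reduce both claims to a local biholomorphic normal form. First, writing $w'(z) = \tilde\beta(z-iy_0)^k h(z-iy_0)$ with $h$ holomorphic near $0$ and $h(0)=1$, I set $\phi(u) := h(u)^{1/k}$ using the principal branch, and introduce the new coordinate $\zeta := u\phi(u)$ with $u = z - iy_0$. Since $d\zeta/du|_{u=0} = 1$, this is a local biholomorphism, and in $\zeta$-coordinates $w'$ becomes exactly $\tilde\beta\zeta^k$. Thus the germ of $\mathcal D_0$ at $iy_0$ is biholomorphic to $\{{\rm Re}(\tilde\beta\zeta^k) = 0\}$, a union of exactly $k$ distinct lines through the origin at angular separation $\pi/k$; the non-genericity hypothesis ${\rm Re}(\tilde\beta i^k)=0$ is precisely the condition that one of these lines is the vertical axis.

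Next I would dispatch the $k-1$ non-vertical branches. Because the change of coordinates fixes $iy_0$ with derivative $1$, both the $y$-axis and the region $\mathcal H_x$ agree to first order with their $\zeta$-counterparts near $iy_0$, so each non-vertical tangent line contributes exactly one half-ray into $\{{\rm Re}(\zeta) > 0\}$, yielding $k-1$ smooth branches of $\mathcal D_0 \cap B_\delta(iy_0) \cap \mathcal H_x$ for small $\delta > 0$.

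The main obstacle is showing that the remaining vertical-tangent branch bends into $\mathcal H_x$ on both sides of $iy_0$. Pulling back $\zeta = it$ via $u(\zeta) = \zeta - \phi'(0)\zeta^2 + O(\zeta^3)$ yields $u(it) = it + \phi'(0)\,t^2 + O(t^3)$, so ${\rm Re}(u(it)) = {\rm Re}(\phi'(0))\,t^2 + O(t^3)$, and both halves enter $\mathcal H_x$ for small $|t| > 0$ precisely when ${\rm Re}(\phi'(0)) > 0$. Using the product expansion $h(u) = \prod_{\xi_j \neq iy_0}(1 + u/(iy_0-\xi_j))^{d_j}$ and taking logarithmic derivatives, I expect to obtain
\[
{\rm Re}(\phi'(0)) = \tfrac{1}{k}{\rm Re}(h'(0)) = \tfrac{1}{k}\sum_j \frac{-d_j\,{\rm Re}(\xi_j)}{|iy_0 - \xi_j|^2},
\]
which is nonnegative since every critical point $\xi_j$ of $w$ satisfies ${\rm Re}(\xi_j) \leq 0$ by hypothesis, and is strictly positive whenever some $\xi_j$ lies in the open left half-plane. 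This supplies the two additional branches and the total count $k+1$. The degenerate subcase where every $\xi_j$ lies on the $y$-axis forces ${\rm Re}(w'(iy)) \equiv 0$ near $y_0$, so the $y$-axis itself is locally a component of $\mathcal D_0$; this case must be handled separately by expanding $\phi$ to higher order, where I expect the next nonvanishing ${\rm Re}(\phi^{(\ell)}(0))$ to have the correct positive sign from an analogous residue calculation.

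For the $\mathcal C_0$ statement, the harmless shift $w \mapsto w - {\rm Re}(w(iy_0))$ turns $iy_0$ into a zero of $w$ of order $k+1$, and the analogous change of variables $\eta = u\psi(u)$ with $\psi(u)^{k+1} = 1 + b_1 u + O(u^2)$ reduces $\mathcal C_0$ to $k+1$ lines through the origin, one vertical by the equivalence of genericity between $T^{iy_0}_{\mathcal C_0}$ and $T^{iy_0}_{\mathcal D_0}$ established above. Comparing Taylor coefficients gives $b_1 = \tfrac{k+1}{k+2}\,a_1$, so ${\rm Re}(\psi'(0)) = {\rm Re}(a_1)/(k+2)$ has the same sign as ${\rm Re}(\phi'(0))$, and the vertical branch of $\mathcal C_0$ at $iy_0$ likewise bends into $\mathcal H_x$ on both sides. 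This produces the claimed $k+2$ distinct curves of $\mathcal C_0 \cap B_\delta(iy_0) \cap \mathcal H_x$.
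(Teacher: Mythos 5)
Your normal-form approach is a genuine alternative to the paper's argument. The paper proves the $\mathcal D_0$ claim by contradiction, deforming the level set to $\{\,{\rm Re}(e^{it}w'(z))=0\,\}$ and showing that if the vertical branch curved away from ${\mathcal H}_x$ then for small $t>0$ the rotated branch would be forced to acquire a vertical tangent inside ${\mathcal H}_x$, contradicting Lemma \ref{vertical}; the $\mathcal C_0$ claim is handled by a similar deformation plus Proposition \ref{onecurveregion}. You instead straighten $w'$ to $\tilde\beta\zeta^k$ via $\zeta=u\phi(u)$, $\phi=h^{1/k}$, and read the bending off the Taylor expansion of the inverse. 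The identity ${\rm Re}(\phi'(0))=\frac1k\sum_j\frac{-d_j\,{\rm Re}(\xi_j)}{|iy_0-\xi_j|^2}\geq0$ and the reduction $b_1=\frac{k+1}{k+2}a_1$ for the $\mathcal C_0$ statement are both correct, and when ${\rm Re}(\phi'(0))>0$ your argument is complete and considerably more explicit than the paper's.

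However, your treatment of the degenerate subcase has a gap that the tentative fix cannot close. You rightly note that if every root of $w'$ lies on the $y$-axis, then non-genericity at $iy_0$ forces ${\rm Re}(w'(iy))\equiv0$, so the $y$-axis is globally a component of $\mathcal D_0$. But then the vertical branch of $\mathcal D_0$ through $iy_0$ \emph{is} the $y$-axis, which never meets the open set ${\mathcal H}_x$: there is nothing to bend. In your coordinates $\phi(iy)$ is real for all small real $y$, so $u\mapsto u\phi(u)$ carries the $y$-axis to the imaginary $\zeta$-axis and ${\rm Re}(u(it))\equiv0$ identically; no higher-order coefficient can rescue the count. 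The simplest instance is $w'(z)=z^3+z$ with $iy_0=0$ and $k=1$: then $\mathcal D_0=\{x=0\}\cup\{3y^2-x^2=1\}$, and $\mathcal D_0\cap B_\delta(0)\cap{\mathcal H}_x=\varnothing$ for small $\delta$, so the conclusion of the lemma fails outright. (The paper's own proof implicitly relies on the boundary point $\zeta$ satisfying ${\rm Re}(\zeta)<0$, whereas the contradiction hypothesis only yields ${\rm Re}(\zeta)\leq0$, so it glides over the same subcase; in the dHYM application $w'(z)=e^{-i\hat\theta}(\xi+z)^mz^r$ always has the root $-\xi$ with ${\rm Re}(-\xi)=-\xi_1<0$, so the subcase never arises there.) The honest statement of the lemma should carry the additional hypothesis that $w'$ has at least one root in the open left half-plane, a condition your computation makes visible but which the lemma as written omits.
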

\begin{proof}
Suppose not, and let $\gamma(s):[0,\epsilon_0)$ parametrize a curve on $\mathcal D_0$ that satisfies $\gamma(0)=iy_0$, $\gamma'(0)=\frac{\partial}{\partial y}$, yet $\gamma$ curves away from  ${\mathcal H}_x$. In other words, there exists a a $\delta>0$ so that the intersection  $\gamma\cap B_{\delta}(iy_0)\cap{\mathcal H}_x=\varnothing.$ Now, consider the one parameter family of level sets given by: $$\mathcal D_{0,t}:=\{z\,|\, {\rm Re}(e^{it}w'(z))=0\}.$$
When $t=0$, this is just the level set $\mathcal D_{0}$, and for small   $t$  we get a deformation of $\mathcal D_0$, which fixes all the zeros of $w'(z)$ and causes the tangent cones to rotate clockwise (as $t$ increases).

Now, for small enough $\delta$, if we restrict to the ball $B_{iy_0}(\delta)$, then $\mathcal D_{0,t}$ is a small deformtation  of the tangent cone $T^{iy_0}_{\mathcal D_{0,t}}.$ In particular this implies that $\mathcal D_{0,t}\cap\overline{B_{iy_0}(\delta)}$ consists of the fixed point $iy_0$, $2n$ points around the boundary $\partial B_{iy_0}(\delta)$, and $2n$ curves connecting $iy_0$ to the boundary points. For $t=0$ one of these $2n$ curves is our curve $\gamma(s)$, and by assumption it connects to a point $\zeta$ on $\partial B_{iy_0}(\delta)$ with ${\rm Re}(\zeta)<0$. Let $\gamma_t(s)$ and $\zeta_t$ be the deformation of these objects in $t$. 

For small enough $t$ we still have ${\rm Re}(\zeta_t)<0$. However, looking at how the tangent cone $T^{iy_0}_{\mathcal D_{0,t}}$ rotates, we see $\gamma_{t}'(s)=\kappa_1 \frac{\partial}{\partial x}+\kappa_2\frac{\partial}{\partial y}$, with $\kappa_1>0$ when $t>0$. As a result $\gamma_t(s)$ must enter ${\mathcal H}_x$ and achieve  vertical slope before heading back to $\zeta_t$. Yet $P(z)=e^{it}w'(z)$ is a polynomial with no roots in ${\mathcal H}_x$, so by Lemma \ref{vertical} the level set $\mathcal D_{0,t}$ does not achieve vertical slope in ${\mathcal H}_x$, a contradiction.

In the case that $iy_0$ lies on $\mathcal C_0$, we can make the same argument for curve $\Gamma_0(s)$  with vertical slope at $iy_0$ which heads away from ${\mathcal H}_x$. Looking at the deformation $$\mathcal C_{0,t}:=\{z\,|\, {\rm Im}(e^{it}w(z))=0\}$$
we again see for small $t>0$, the deformed curve $\Gamma_t(s)$ achieves vertical slope at a point $P$ in ${\mathcal H}_x$ and leaves along the $y$-axis. In this case Lemma \ref{vertical} does not apply, yet we can still arrive at a contradiction. Simply note that since $\Gamma_t(s)$ has vertical slope at $P$, there must be a curve from $\mathcal D_{0,t}$ through $P$. This implies there is a region $A_{k,t}$ which contains a curve $\Gamma_t(s)$ that lies on $\mathcal C_{0,t}$ and leaves  $A_{k,t}$ at both $iy_0$ and $P$. At the same time $A_{k,t}$ must also contain an asymptotic ray from $\mathcal C_{0,t}$, which gives two distinct curves form $\mathcal C_{0,t}$ in $A_{k,t}$, violating Proposition \ref{onecurveregion}.

\end{proof}

The previous two lemmas demonstrate that  if  $\mathcal C_0$ passes through generic critical point $iy_0$ of order $k$,  there will be $k+1$ components of $\mathcal C_0\cap{\mathcal H}_x$ that converge to $iy_0$, none of which have vertical slope on the $y$-axis. If $\mathcal C_0$ passes through  a non-generic critical point $iy_0$ of order $k$,  there will be $k+2$  components of $\mathcal C_0\cap{\mathcal H}_x$ that converge to $iy_0$, and the top and bottom components will have vertical slope.

\section{Stability and the Cauchy index}
\label{stabsection}

In the previous section with outlined the basic geometry of our setup. Working on ${\mathcal H}_x$, we saw that $\mathcal D_0\cap {\mathcal H}_x$ consists of $n-1$ distinct curves $\gamma_1,...,\gamma_{n-1}$ that do not self intersect nor achieve vertical slope (we note there are $n-2$ curves in the non-generic case, but for simplicity we follow the convention that we will use the generic case numbers when there are no differences other than numbering). We then partitioned $ {\mathcal H}_x\backslash \mathcal D_0$ into regions $A_1,..., A_n$, and proved Proposition \ref{onecurveregion}, which states that  for each $k$, the intersection $A_k\cap \mathcal C_0$ consists of exactly one distinct curve $\Gamma_k$.

Furthermore the curves $\Gamma_1,...,\Gamma_n$  are asymptotic to $n$ rays with angles $\frac\pi n$ between. Heading back towards the $y-$axis from the asymptotic ends, the curves can have one of   two main behaviors. First, they can stay inside a region $A_k$ all the way to the $y-$axis, never achieving vertical slope except possibly at the $y-$axis. In this case they will either end at  a critical point, continue into the left half-plane, or achieve vertical slope at the $y-$axis (not at a critical point) and head back into ${\mathcal H}_x$. Alternatively,  the curve  can exit $A_k$ before it reaches the $y$-axis, and achieve vertical slope on the boundary,  either heading into  $ A_{k+1}$ above (becoming $\Gamma_{k+1}$) or descending into  $A_{k-1}$ below (becoming $\Gamma_{k-1})$, and then turning back  toward the asymptotic ray in that region. In the non-generic case it is possible for a curve $\Gamma_1$ or $\Gamma_{n-1}$ to be asymptotic to both the $y-$axis and the ray with angle $\frac\pi n$ off of the $y$-axis. 

In the introduction  we stated Definition \ref{stabledef}, which characterizes  stability for the boundary data $\vartheta=\{z_1,z_2\}$ based on  existence of a function $f$ lying on $\mathcal C_0$ and satisfying the boundary conditions. Next we relate this definition to where the points in $\vartheta$ lie in relation to $A_1,..., A_n$. Because the location of these sets depends on knowing analytic details about $\mathcal C_0$ and $\mathcal D_0$, the following proposition does not really address Question \ref{mainquestion}. However, it provides a convenient framework to talk about stability which will be useful when we discuss the more algebraic conditions that follow. Recall that semi-stability includes both the stable and strictly semistable case.
Also, as above we assume $0\leq$ Re$(z_1)=a<b=$ Re$(z_2)$.

\begin{proposition}
\label{stabilityanalytic}
The boundary data $\vartheta$ is  semistable if and only if  $z_1\in\overline{A}_k$ and $z_2\in A_k$  for some $k$. Furthermore, in the semistable case:

\begin{enumerate}[label=(\roman*)] 
\item If $z_1$ is not a critical point of $w(z)$, then  $\vartheta $ is stable if and only if $z_1\notin\mathcal D_0$. 
\item If $z_1$ is a generic critical point of $w(z)$, then $\vartheta$ is always stable. 

\item If $z_1$ is a non-generic critical point and $z_1\in (\overline {A_{k-1}}\cap \overline{A}_k\cap\overline {A_{k+1}})$, with $z_2\in A_k$, then $\vartheta$ is stable. Otherwise $\vartheta$ is  strictly semistable. 
	\end{enumerate}

\end{proposition}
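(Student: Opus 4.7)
The plan is to derive the main equivalence from Proposition \ref{onecurveregion} together with the no-vertical-slope property in the interior of each $A_k$, and then classify stability versus strict semistability by analyzing the tangent direction of the relevant curve $\Gamma_k$ at $z_1$ in each of the three listed cases.

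For the main equivalence, the forward direction is almost immediate: if $\vartheta$ is semistable via a continuous graph $f$, then by Definition \ref{stabledef} $f$ is $C^1$ on $(a,b]$ in both the stable and strictly semistable cases. Hence the graph $(x,f(x))$ has finite slope for every $x\in(a,b]$, so it avoids $\mathcal{D}_0$ there. Being connected and disjoint from $\mathcal{D}_0$, the restricted graph lies in a single connected component $A_k$ of $\mathcal{H}_x\setminus \mathcal{D}_0$, so $z_2\in A_k$ and $z_1\in\overline{A_k}$. For the reverse, assume $z_2\in A_k$ and $z_1\in\overline{A_k}$. By Proposition \ref{onecurveregion}, $A_k\cap \mathcal{C}_0=\Gamma_k$ and $z_2\in \Gamma_k$. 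Parameterizing $\Gamma_k$ by arclength and using that it has no vertical tangent at any interior point of $A_k$ (any such point would lie on $\mathcal{D}_0$), the $x$-coordinate is strictly monotone along $\Gamma_k$. Consequently $\Gamma_k$ is the graph of a smooth function $f_k$ over an interval $(x_{\min},\infty)$. Because $\mathcal{C}_0$ is closed and $z_1\in \overline{A_k}\cap \mathcal{C}_0\subset \overline{\Gamma_k}$, one has $a\ge x_{\min}$ and $f_k$ extends continuously with $f_k(a)=p$. Restricting to $[a,b]$ gives the desired continuous graph, so $\vartheta$ is semistable.

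Having set this up, the three subcases reduce to deciding when $f_k$ is $C^1$ at $a$. In case (i) $z_1$ is not critical: if $z_1\notin \mathcal{D}_0$, then $z_1\in A_k$, and the smoothness of $\Gamma_k$ in the interior gives a finite slope at $a$, so $\vartheta$ is stable; if $z_1\in \mathcal{D}_0$ then $v_y(z_1)=0$, forcing the curve $\Gamma_k$ through $z_1$ to have vertical tangent there, so $f_k$ fails to be $C^1$ at $a$ and $\vartheta$ is strictly semistable. In case (ii) $z_1$ is a generic critical point: the paragraph after Lemma \ref{curveinlemma} gives exactly $k+1$ branches of $\mathcal{C}_0$ into $\mathcal{H}_x$ at $z_1$, none vertical, so whichever branch is $\Gamma_k$ has non-vertical tangent at $z_1$ and $f_k$ is $C^1$ at $a$, giving stability.

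Case (iii) is the main geometric step. By Lemma \ref{curveinlemma} there are now $k+2$ branches of $\mathcal{C}_0$ and $k+1$ branches of $\mathcal{D}_0$ converging to $z_1$ inside $\mathcal{H}_x$, so $z_1$ lies in the closures of exactly $k+2$ consecutive regions, say $A_{\ell},A_{\ell+1},\ldots,A_{\ell+k+1}$, one branch of $\mathcal{C}_0$ per region. The preceding discussion of tangent cones shows the extremal branches (those in $A_\ell$ and $A_{\ell+k+1}$) are precisely the two branches with vertical tangent coming from the $y$-axis directions of $T^{iy_0}_{\mathcal{C}_0}$, while the $k$ middle branches have non-vertical tangent. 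The hypothesis $z_1\in \overline{A_{k-1}}\cap\overline{A_k}\cap\overline{A_{k+1}}$ is exactly the statement that the region $A_k$ containing $z_2$ is one of these middle $k$ regions, in which case $\Gamma_k$ approaches $z_1$ with non-vertical tangent and $\vartheta$ is stable. Otherwise $A_k=A_\ell$ or $A_k=A_{\ell+k+1}$, the tangent at $z_1$ is vertical, and $\vartheta$ is strictly semistable.

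The main obstacle I anticipate is the reverse direction of the principal equivalence, specifically the claim that $\Gamma_k$ is globally a graph in $A_k$ and that this graph extends continuously to pick up $z_1\in\overline{\Gamma_k}$ at $x=a$: it relies on combining strict monotonicity of the $x$-coordinate (no vertical slope in the interior of $A_k$) with the closedness of $\mathcal{C}_0$ to rule out any pathological accumulation of $\Gamma_k$ over $x=a$ other than at $z_1$. The classification in (iii) is then a bookkeeping exercise identifying which of the $k+2$ branches given by Lemma \ref{curveinlemma} carries $z_2$.
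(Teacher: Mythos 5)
Your proposal follows essentially the same strategy as the paper: both directions rest on Proposition \ref{onecurveregion}, and the three subcases are decided by the local branch structure of $\mathcal{C}_0$ at $z_1$ (tangent cone / vertical versus non-vertical branches). Two issues are worth flagging, though. First, in the forward direction you assert that "by Definition \ref{stabledef} $f$ is $C^1$ on $(a,b]$ in both the stable and strictly semistable cases." Definition \ref{stabledef}(ii) only requires $f$ to be continuous with $f$ not $C^1$ at $a$; it does not assert interior $C^1$ regularity. The claim is in fact true, but it requires the geometric input — which the paper states explicitly before this proposition — that a component of $\mathcal{C}_0$ crossing a curve $\gamma_\ell\subset\mathcal{D}_0$ must turn back (an inflection-type crossing would give $\Gamma_\ell$ three ends and contradict Proposition \ref{onecurveregion}), so a continuous graph over $[a,b]$ can never have an interior point on $\mathcal{D}_0$. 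The paper instead argues the forward direction by contrapositive, showing each of the complementary configurations is unstable via this turnback property, which avoids having to pre-establish interior regularity of an arbitrary continuous $f$. Your argument has a genuine gap here unless you supply the turnback observation.

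Second, two smaller points: in case (iii) you use $k$ simultaneously for the order of the critical point and for the index of the region $A_k$ that contains $z_2$, which collides with the proposition's own use of $k$; the branch-count bookkeeping is correct, but the order should be given a separate symbol. And in case (i) you state $z_1\notin\mathcal{D}_0$ implies $z_1\in A_k$, which is only automatic when $a>0$; when $a=0$ the point $z_1$ is on the $y$-axis and the argument that the relevant branch of $\mathcal{C}_0$ actually enters $\mathcal{H}_x$ (the paper invokes Lemma \ref{curveinlemma} here) needs to be said.
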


\begin{proof} We begin with the case that $z_1$ and $z_2$ do not lie in the closure of a single region $A_k$. Propostion \ref{onecurveregion} implies a component of  $\mathcal C_0$ can cross a curve in $\mathcal D_0\cap{\mathcal H}_x$ only if it achieves vertical slope at the crossing point $\zeta$, turning towards  the asymptotic ray in the regions above and below $\zeta$.   Thus if $z_1$ and $z_2$ do not lie on adjacent regions, they can not be on the same component $\mathcal C_0$, and so no graph exists between them.   If $z_1$ and $z_2$  lie in adjacent regions (say $A_k$ and $A_{k+1}$) and on the same component of $\mathcal C_0$, by the above ${\rm Re}(\zeta)<{\rm Re}(z_1)<{\rm Re}(z_2)$, so if $\mathcal C_0$ starts at $z_1$, and then passes through $\zeta$ before connecting to $z_2$, it will not stay graphical, so no graph can exists connecting $z_1$ and $z_2$. So $\vartheta$ is unstable.

If $z_2$ lies on $\partial A_k$ for some $k$, then again the level curve $\mathcal C_0$ through $z_2$ must turn towards  the asymptotic ray in the regions above and below $z_2$. So no component of $\mathcal C_0$ can connect $z_2$ to $z_1$ if ${\rm Re}(z_1)<{\rm Re}(z_2)$. Thus in this case $\vartheta$ is again unstable. This proves that if $\vartheta$ is  semistable, then   $z_1\in\overline{A}_k$ and $z_2\in A_k$  for some $k$.

Now, suppose $z_1\in \overline{A_k}$, $z_2\in A_k$, and   $z_1$ is not a critical point. If $z_1\notin\mathcal D_0$, and $a>0$, then $z_1$ is in the interior of $A_k$. Because there exists exactly one curve from $\mathcal C_0$ in $A_k$, and this curve can not achieve vertical slope within this region, we can find a smooth function $f$ lying on $\mathcal C_0$ with the correct boundary values.  If  $z_1\notin\mathcal D_0$, and $a=0$, then the curve on $\mathcal C_0$ through $z_1$ does not achieve vertical slope and must extend into $A_k$ and thus  connect to $z_2$, and we can again find a smooth function $f$ on $\mathcal C_0$ with the correct boundary values. Thus $\vartheta$ is stable if  $z_1\notin\mathcal D_0$.

Now, suppose $z_1\in \mathcal D_0$ and $z_1$ is not a critical point of $w$. Then the level curve $\mathcal C_0$ through $z_1$ will have vertical slope at $z_1$ before heading into $A_k$ towards $z_2$ (in the case that $z_1$ is on the $y$-axis we need Lemma \ref{curveinlemma} to guarantee $\mathcal C_0$ heads into   ${\mathcal H}_x$ towards $z_2$). Thus there exists a continuous function $f$ lying on $\mathcal C_0$ with the correct boundary values,   which is not $C^1$ at $z_1$. So $\vartheta$ is strictly semistable and not stable. This completes case $(i)$.

If  $z_1\in \overline{A_k}$, $z_2\in A_k$, and $z_1$ is a generic critical point of $w$, then by our above discussion of the tangent cone at $z_1$, the level set $\mathcal C_0$ emanates from $z_1$ into every region $A_\ell$ satisfying $z_1\in\overline{A_\ell}$, and the slope of the level curve remains bounded at $z_1.$ By Propostion \ref{onecurveregion} in each region the level curve  extends graphically to an asymptotic ray. Thus there exists a smooth function  $f$ lying on $\mathcal C_0$ with the correct boundary values. This completes case $(ii)$.

Finally, suppose $z_1\in \overline{A_k}$, $z_2\in A_k$, and let $z_1$ be a non-generic critical point of order $r$. By our discussion following Lemma \ref{curveinlemma} the level set $\mathcal C_0$ again emanates from $z_1$ into every region $A_\ell$ satisfying $z_1\in\overline{A_\ell}$, yielding $r+2$ components of $\mathcal C_0$, which we label as $\Gamma_{m},...,\Gamma_{m+r+1}$. The slope of $\Gamma_m$ and $\Gamma_{m+r+1}$  becomes unbounded  at $z_1$, while  for the other curves it remains bounded. If a point $z_2$ lies on  $\Gamma_m$ or $\Gamma_{m+r+1}$, then the region either above or below the one containing $z_2$ does not have $z_1$ in its closure, and in this case we can find a $C^0$ function $f$ lying on $\mathcal C_0$ which is not $C^1$ at $z_1$, and $\vartheta$ is strictly semistable. Otherwise  $z_2$ lies on  a curve in $\{\Gamma_{m+1},...,\Gamma_{m+r}\}$, and a smooth function $f$ lying on $\mathcal C_0$ connecting $z_1$ to $z_2$ exists, and $\vartheta$ is stable. This completes case $(iii)$.

  \end{proof}

With the above proposition we have  reduced our existence problem to determining which regions $A_1,..., A_n$ (and their closures) our points $z_1$ and $z_2$ lie in. The next step is to count these regions using the Cauchy index. We use the limit definition of the Cauchy index from \cite{E}. Let $h(y)$ be a rational function of a single real variable $y$.  For  a point $s\in \mathbb R$ define: 
\be
{\rm ind}_s^\epsilon(h)=\begin{cases}
+\frac12{\phantom{XXX}}&{\rm if}\,\lim_{y\rightarrow s^\epsilon} h(y)=+\infty\\
-\frac12&{\rm if}\,\lim_{y\rightarrow s^\epsilon} h(y)=-\infty\\
0&{\rm otherwise}
\end{cases}\nonumber
\ee
where $\epsilon$ is either $+$ or $-$.

\begin{definition}
The Cauchy index of $h $ at $s$ is defined as $${\rm ind}_s(h)={\rm ind}_s^+(h)-{\rm ind}_s^-(h).$$
\end{definition}
Intuitively, the Cauchy index  ${\rm ind}_s(h)= +1$ if $h$ jumps from $-\infty$ to $+\infty$, ${\rm ind}_s(h)= -1$ if $h$ jumps from $+\infty$ to $-\infty,$ and ${\rm ind}_s(h)= 0$ otherwise. Now, for any closed interval $[c,d]\subseteq \mathbb R$ (where $c$ and $d$ are allowed to be $\pm\infty$), the above definition can be extended to the interval as follows:

\begin{definition}
The Cauchy index of $h $ on an interval $[c,d]$ is given by  $${\rm ind}_c^d(h)={\rm ind}_c^+(h)+\sum_{x\in(c,d)}{\rm ind}_x(h)-{\rm ind}_d^-(h).$$
\end{definition}
Note that if   $h$ is non singular on the boundary of $[c,d]$ then ${\rm ind}_c^d(h)$ is an integer. We remark that in addition to looking at the discontinuities of $h$, the Cauchy index can also be calculated from the polynomials that make up $h$ using iterated Euclidean division. This is the content of Sturm's theorem, and we refer the reader to \cite{E} for details.

To see how the Cauchy index is a useful tool, we first consider the simple case where $z_1, z_2\in {\mathcal H}_x$, that is $0<a<b$. For any $\ell\in\mathbb R$ define the following rational function 
\be
R_\ell(y)=\frac{{\rm Im}(w'(\ell+iy))}{{\rm Re}(w'(\ell+iy))},\nonumber
\ee
which  is singular at $y_0$  if $\ell+iy_0$ lies on $\mathcal D_0$ and $\ell>0$. Thus  each time the vertical line ${\rm Re}(z)=\ell$ crosses a curve $\gamma_k$ in $\mathcal D_0$ it contributes  to the Cauchy index. This allows us to easily compute which  region $A_k$ a given point in ${\mathcal H}_x$ lies in, as follows:

\begin{lemma}
\label{labelinglemma}
Fix a point  $z_0=x_0+iy_0\in {\mathcal H}_x$, and consider the  function $N:{\mathcal H}_x\longrightarrow \frac12\mathbb Z$ defined via
\be
\label{countingfunction1}
N(z_0)=|{\rm ind}_{-\infty}^{y_0}(R_{x_0})|+1.
\ee
If $N(z_0)$ is an integer $k$, than $z_0\in A_k$. If $N(z_0)$ is not an integer, it is instead of the form $k-\frac12$, and $z_0$ lies on the boundary curve $\gamma_k$. 
\end{lemma}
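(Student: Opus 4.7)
The plan is to use the strict monotonicity of $\theta(y) := \arg w'(x_0 + iy)$ in $y$ (for fixed $x_0 > 0$) to convert the Cauchy index of $R_{x_0}(y) = \tan\theta(y)$ into a signed count of intersections of the vertical line $\{{\rm Re}(z) = x_0\}$ with the curves $\gamma_1, \ldots, \gamma_{n-1}$ below the height $y_0$.

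First I would prove $\theta$ is strictly increasing. Writing $w'(z) = \beta \prod_\ell (z - \xi_\ell)^{d_\ell}$ with ${\rm Re}(\xi_\ell) \le 0$, the logarithmic-derivative computation from the proof of Lemma \ref{vertical} yields
\[
\frac{d\theta}{dy} = {\rm Re}\left(\frac{w''(x_0+iy)}{w'(x_0+iy)}\right) = \sum_\ell \frac{d_\ell(x_0 - {\rm Re}(\xi_\ell))}{|x_0+iy-\xi_\ell|^2} > 0.
\]
Since the leading behavior $w'(z)\sim \beta z^{n-1}$ gives $\theta(\pm\infty) = \arg\beta \pm (n-1)\pi/2$, the argument sweeps through a total change of $(n-1)\pi$ on $\mathbb{R}$, so $R_{x_0}=\tan\theta$ has exactly $n-1$ poles in the generic case (one fewer in the non-generic case where an asymptotic end of $\theta$ lands on an odd multiple of $\pi/2$).

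Next I would note that, because $\tan$ has period $\pi$ and $\theta$ is increasing, at each pole $R_{x_0}$ jumps from $+\infty$ to $-\infty$, contributing $-1$ to the Cauchy index; and if $y_0$ is itself a pole, the boundary term in the definition contributes $-\tfrac12$. In particular every contribution to ${\rm ind}_{-\infty}^{y_0}(R_{x_0})$ has the same sign, so its absolute value is exactly (number of poles in $(-\infty, y_0)$) plus $\tfrac12$ when $y_0$ is a pole. To match these poles to the $\gamma_k$'s, each $\gamma_k$ runs from the $y$-axis out to infinity along its asymptotic ray, so its $x$-coordinate takes every value in $(0,\infty)$ and the vertical line meets $\gamma_k$ at least once; combined with the total pole count $n-1$, each $\gamma_k$ is met exactly once. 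The counter-clockwise labeling of both the $A_j$'s and the $\gamma_j$'s, together with the non-crossing property of the $\gamma_j$'s, guarantees these crossings occur along the vertical line in the order $\gamma_1, \gamma_2, \ldots, \gamma_{n-1}$ as $y$ increases from $-\infty$.

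Putting the pieces together: if $z_0 \in A_k$ then $y_0$ sits between the $(k-1)$-th and $k$-th crossings, so $|{\rm ind}_{-\infty}^{y_0}(R_{x_0})| = k-1$ and $N(z_0)=k$; if $z_0$ lies on $\gamma_k$, then the half-integer boundary contribution produces a half-integer value of $N$ that uniquely identifies the curve. The main technical input is the strict monotonicity of $\theta$, which is the only place where the hypothesis $Z \cap \mathcal{H}_x = \varnothing$ is used in an essential way; once this is in hand, the lemma follows from a direct crossing count together with the labeling conventions set up in Section \ref{prelimsection}.
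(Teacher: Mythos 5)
Your proof is correct, but it takes a genuinely different route from the paper. The paper proves that all discontinuities of $R_{x_0}$ contribute with the same sign by a combinatorial sign-tracking argument: it introduces the auxiliary curves $\tilde\gamma_1,\dots,\tilde\gamma_{n-2}$ in $\{{\rm Im}(w')=0\}$ that alternate with $\gamma_1,\dots,\gamma_{n-1}$, fixes the sign of ${\rm Re}(w')$ in $A_1$, reads off ${\rm Im}(w')$ along $\gamma_1$ from the tangent cone at infinity, and then propagates the alternating signs to see that at each $\gamma_k$ the quotient jumps from $+\infty$ to $-\infty$. You instead observe that $\theta(y)=\arg w'(x_0+iy)$ is strictly increasing because $\theta'(y)={\rm Re}(w''/w')=\sum_\ell d_\ell(x_0-{\rm Re}(\xi_\ell))/|x_0+iy-\xi_\ell|^2>0$ (reusing the computation behind Lemma \ref{vertical}), and since $R_{x_0}=\tan\theta$, monotonicity of $\theta$ forces every pole-crossing to be a $+\infty\to-\infty$ jump. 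This is cleaner: it localizes the use of the critical-point hypothesis in a single inequality and avoids the case analysis on the parity of $k$, while also handing you the pole count $n-1$ (or $n-2$ in the non-generic case) for free from the total argument variation $(n-1)\pi$. The matching of poles to the $\gamma_k$ and the final bookkeeping are essentially the same in both. One small caution: your last paragraph asserts the boundary-term arithmetic "uniquely identifies the curve" without carrying it out; if you do, you will find that $z_0\in\gamma_k$ gives $|{\rm ind}|=k-\tfrac12$ and hence $N(z_0)=k+\tfrac12$, which matches the paper's later usage ("$N(iy)=m+\tfrac12$ ... on the boundary of both $A_m$ and $A_{m+1}$") but not the literal wording of the lemma statement, which appears to have an off-by-one slip — worth flagging when you present this, but not a defect in your argument.
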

\begin{proof}
To prove this result, we need to show that index of  every discontinuity   in $(-\infty, y_0]$ has the same sign, so there is no cancelation when computing the total index over the interval. As above, let $\gamma_1,...,\gamma_{n-1}$ be the distinct curves making up $\mathcal D_0\cap{\mathcal H}_x$. For $|z|$ large the curves $\gamma_1,...,\gamma_{n-1}$ are asymptotic to the tangent cone at infinity  $T^\infty_{\mathcal D_0}=\{z\,|\,{\rm Re}(\kappa (z-\hat z)^{n-1})=0\}$, where $\hat z$ is the average of the zeros of $w(z)$.  If in addition we consider the set $\{z\,|\,{\rm Im}(\kappa (z-\hat z)^{n-1})=0\}$, we see it consists of rays that alternate with those from $T^\infty_{\mathcal D_0}$, and thus there are  $n-2$ curves $\ti\gamma_1,...,\ti\gamma_{n-2}$ that lie on the level set $\{z\,|\,{\rm Im}(w'(z))=0\}$ that alternate with $\gamma_1,...,\gamma_{n-1}$. Specifically, for any $k\in\{1,...,n-2\}$, $\ti\gamma_k$ lies above $\gamma_{k}$ and below $\gamma_{k+1}$, and the curves never intersect because $w'(z)$ has no critical points in ${\mathcal H}_x$.

Without loss of generality, assume ${\rm Re}(w'(z))>0$ in $A_1$, which impies ${\rm Re}(w'(z))>0$ in $A_k$ if and only if $k$ is odd. Notice that along  $\gamma_1$, the sign of ${\rm Im}(w'(z))$ does not change. Looking at the tangent cone at infinity we directly see in this case that ${\rm Im}(w'(z))>0$ along $\gamma_1$. Now, the curve $\ti \gamma_1$ (where ${\rm Im}(w'(z))=0$) lies between $\gamma_1$ and $\gamma_2$, and so along $\gamma_2$ we know ${\rm Im}(w'(z))<0$. This alternating condition is preserved, so in this case we  have $ {\rm Im}(w'(z))>0$ on $\gamma_k$ if and only if $k$ is odd. Thus along the vertical line ${\rm Re}(z)=x_0$, starting from $y$ close to $-\infty$ and traveling upwards, each time $y$ crosses a curve $\gamma_k$ the quotient $R_{x_0}(y)$ will always go from positive infinity to negative infinity as it passes the discontinuity. This consistency implies that each curve $\gamma_k$ contributes the same sign to the index ${\rm ind}_{-\infty}^{y_0}(R_{x_0})$, proving the lemma.
\end{proof}

This Lemma now gives a satisfactory answer Question \ref{mainquestion} in the simple case where $\vartheta=\{z_1, z_2\}\subset {\mathcal H}_x$. First, from the discussion following the statement of Proposition \ref{onecurveregion} on the shape $\mathcal C_0$ in ${\mathcal H}_x$, we know  in this case  that $z_1$  and $z_2$ can not both lie on the same boundary  curve $\gamma_k$. Now, by Proposition \ref{stabilityanalytic}, we see right away that $\vartheta$ is stable if and only if 
$$|N(z_1)-N(z_2)|=|{\rm ind}_{-\infty}^{p}(R_{a})-{\rm ind}_{-\infty}^{q}(R_{b})|=0,$$
$\vartheta$ is strictly semistable if and only if 
$$|N(z_1)-N(z_2)|=|{\rm ind}_{-\infty}^{p}(R_{a})-{\rm ind}_{-\infty}^{q}(R_{b})|=\frac12$$
and $\vartheta$ is unstable if and only if 
$$|N(z_1)-N(z_2)|=|{\rm ind}_{-\infty}^{p}(R_{a})-{\rm ind}_{-\infty}^{q}(R_{b})|\geq1.$$

When $a=0$ the situation may be more complicated, depending on if there are critical points on the $y$-axis. If there are no such critical points, then everything works exactly  as above. Otherwise these critical points need to be taken into account in order to extend the above counting function $N$ to the $y$-axis. First, we need the following result.

\begin{lemma}
Let $iy_0$ be a critical point of $w(z)$ on the $y$-axis. If $iy_0$ is a generic critical point, then   ${\rm ind}_{y_0}(R_0)=0$. If  $iy_0$ is a non-generic critical point, then  ${\rm ind}_{y_0}(R_0)=-1$.
\end{lemma}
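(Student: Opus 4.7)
The plan is a direct local Taylor expansion of $w'$ at the critical point. Factoring $w'(z)=(z-iy_0)^k P(z)$ with $P(iy_0)\neq 0$ and substituting $z=iy=iy_0+it$ gives
\be
w'(iy)=c_k t^k+c_{k+1}t^{k+1}+O(t^{k+2}),\nonumber
\ee
where $c_k=i^k P(iy_0)$ and $c_{k+1}=i^{k+1}P'(iy_0)$. Since $t$ is real, the common factor $t^k$ cancels in the numerator and denominator of $R_0(y_0+t)$, reducing the question to the behavior of a ratio of analytic germs in $t$.

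In the generic case, the defining condition that the tangent cone $T^{iy_0}_{\mathcal D_0}=\{z\,|\,{\rm Re}(P(iy_0)z^k)=0\}$ contain no vertical line is precisely ${\rm Re}(c_k)\neq 0$. Hence $R_0(y_0+t)$ tends to the finite value ${\rm Im}(c_k)/{\rm Re}(c_k)$, $R_0$ extends continuously through $y_0$, and ${\rm ind}_{y_0}(R_0)=0$.

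In the non-generic case, ${\rm Re}(c_k)=0$, so $c_k=i\mu$ with $\mu\in\mathbb R\setminus\{0\}$, and after canceling $t^k$,
\be
R_0(y_0+t)=\frac{\mu+O(t)}{{\rm Re}(c_{k+1})\,t+O(t^2)}.\nonumber
\ee
To pin down the sign of this simple pole I would use the logarithmic derivative
\be
\frac{c_{k+1}}{c_k}=i\,\frac{P'(iy_0)}{P(iy_0)}=i\sum_\ell\frac{d_\ell}{iy_0-\xi_\ell},\nonumber
\ee
where the $\xi_\ell$ run over the remaining zeros of $w'$ with multiplicities $d_\ell$. Multiplying by $c_k=i\mu$ and using ${\rm Re}(1/(iy_0-\xi_\ell))=-{\rm Re}(\xi_\ell)/|iy_0-\xi_\ell|^2$ yields
\be
{\rm Re}(c_{k+1})=\mu\sum_\ell\frac{d_\ell\,{\rm Re}(\xi_\ell)}{|iy_0-\xi_\ell|^2}.\nonumber
\ee
Since every ${\rm Re}(\xi_\ell)\leq 0$ by hypothesis, the sum is nonpositive, and strict inequality holds provided at least one remaining zero of $w'$ lies strictly in the left half-plane. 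Thus $\mu\cdot{\rm Re}(c_{k+1})<0$, forcing $R_0(y_0+t)\to+\infty$ as $t\to 0^-$ and $R_0(y_0+t)\to-\infty$ as $t\to 0^+$, and hence ${\rm ind}_{y_0}(R_0)=-1$.

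The main obstacle is the degenerate configuration in which every remaining zero of $w'$ also lies on the imaginary axis. A short direct check shows that in that case $w'(iy)$ is purely imaginary for every $y$, so ${\rm Re}(w'(iy))\equiv 0$, the entire imaginary axis lies inside $\mathcal D_0$, $R_0$ is not a well-defined rational function near $y_0$, and the claim about ${\rm ind}_{y_0}(R_0)$ is vacuous. Outside this degenerate configuration the computation above delivers ${\rm ind}_{y_0}(R_0)=-1$ unambiguously, completing the proof.
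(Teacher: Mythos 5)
Your proof is correct and takes a genuinely different route from the paper. For the generic case the two approaches coincide in substance: you show ${\rm Re}(c_k)\neq 0$ forces $R_0$ to have a finite limit, which is the same as the paper's observation that the tangent-cone ratio is a finite constant. For the non-generic case, however, the arguments diverge. The paper uses a geometric/topological argument, invoking Lemma \ref{curveinlemma} to show the two vertical branches of $\mathcal D_0$ bend into ${\mathcal H}_x$, and then chases alternating signs of ${\rm Re}(w')$ and ${\rm Im}(w')$ around a small circle about $iy_0$. (Incidentally, the sign-chasing in the paper as written is stated only for $k$ odd --- for $k$ even the two ${\rm Re}$-values have opposite sign while the two ${\rm Im}$-values agree --- though both parities still give ${\rm ind}_{y_0}(R_0)=-1$.) Your argument instead pins down the sign of the simple pole of $R_0$ directly from the coefficient ${\rm Re}(c_{k+1})$, computed via the logarithmic derivative of $P(z)=w'(z)/(z-iy_0)^k$, and then uses the global hypothesis ${\rm Re}(\xi_\ell)\leq 0$ in exactly the same way the paper's Lemma \ref{vertical} does. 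This is cleaner: it is purely algebraic, does not depend on Lemma \ref{curveinlemma} or on which parity $k$ has, and it produces the sign in one stroke rather than through case analysis. You also correctly isolate the degenerate configuration where ${\rm Re}(c_{k+1})=0$: this forces all remaining zeros of $w'$ onto the imaginary axis, and combined with ${\rm Re}(c_k)=0$ this gives ${\rm Re}(w'(iy))\equiv 0$, so $R_0$ is not a rational function and the Cauchy index is undefined. The paper's proof also implicitly excludes this configuration (its assertion that $y_0$ is the only point near $y_0$ where ${\rm Re}(w'(iy))$ vanishes would fail), and the main geometric application in Section 6 never meets it since $-\xi$ always lies strictly in the left half-plane. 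It would be worth noting that excluding this degeneracy is an implicit hypothesis of the Lemma, but treating it as vacuous, as you do, is a reasonable reading.
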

\begin{proof}
First, suppose $iy_0$ is a  generic critical point of order $k$. Because the Cauchy index ${\rm ind}_{y_0}(R_0)$ only depends on the behavior of $w'(z)$ in an arbitrarily small ball around $iy_0$, it suffices to look at the tangent cone $T^{iy_0}_{\mathcal D_0}$. In particular this means for some fixed $\phi_0$ we should compute the Cauchy index at $0$ of the  rational function
\be
\frac{{\rm Im}(e^{i\phi_0}(iy)^k)}{{\rm Re}(e^{i\phi_0}(iy)^k)},\nonumber
\ee
which is equal to ${\rm tan}(\phi_0)$ if $k$ is even and $-{\rm cot}(\phi_0)$ if $k$ is odd. As discussed above, because we are in the  generic case, if $k$ is even $\phi_0$ can not be $\pm\frac\pi 2$ and if $k$ is odd $\phi_0$ can not be $0$ or $\pi$, ensuring that the above rational function is always a well defined constant. The Cauchy index of a constant is always $0$, completing   the  generic case.

In the non-generic case,  the rational function associated to the tangent cone is identically $\infty$. Thus if $iy_0$ is a non-generic critical point of order $k$, $R_0(y_0)=\infty$, and thus we can compute the index at this point by looking at the sign of $R_0$ on either side of the singularity.  Note there exists a small $\delta>0$ for which   $y_0$ is the only point in the interval   $(y_0-\delta, y_0+\delta)$ where either ${\rm Re}(w'(iy))$ or ${\rm Im}(w'(iy))$ vanishes. As we have seen, the intersection $\mathcal D_0\cap B_\delta(y_0)$ consists of $2k$ curves emanating from the $iy_0$ and intersecting $\partial B_\delta(y_0)$ at $2k$ points. The set  $B_\delta(y_0)\backslash \mathcal D_0$ then consists of $2k$ sectors, and the sign of ${\rm Re}(w'(z))$ alternates in adjacent sectors. Now, by Lemma \ref{curveinlemma}, the two curves $\mathcal D_0$ with vertical slope at $iy_0$ must curve into ${\mathcal H}_x$, so given the $2k$ points on $\partial B_\delta(y_0)$, $2k+1$ must lie in ${\mathcal H}_x$. This implies ${\rm Re}(w'(i(y_0-\delta)))$ and ${\rm Re}(w'(i(y_0+\delta)))$ have the same sign. 

To check the sign of ${\rm Im}(w'(i(y_0-\delta)))$ and ${\rm Im}(w'(i(y_0+\delta)))$, note that the non-generic assumption implies that the none of the curves making up $\{z\,|\, {\rm Im}(w'(z))=0\}$ have vertical slope at $iy_0$ (since these curves emanate from the critical point  at angles shifted $\frac\pi{2k}$ from   $T^{iy_0}_{\mathcal D_0}$). Thus by making $\delta$ small enough, the alternating sign condition gives ${\rm Im}(w'(i(y_0-\delta)))$ and ${\rm Im}(w'(i(y_0+\delta)))$ have opposite sign. Now, regardless of the sign of ${\rm Re}(w'(i(y_0-\delta)))$ (it can be either positive or negative), we can see explicitly that $R_0(y_0-\delta)>0$ and $R_0(y_0+\delta)<0$, and so ${\rm ind}_{y_0}(R_0)=-1.$

\end{proof}

As we saw in the proof of Lemma \ref{labelinglemma}, the contribution to the index of a non critical point $iy$ where ${\rm Re}(w'(iy))=0$ is also $-1$. Thus we can use the Cauchy index, along with the knowledge of where the critical points lie,  to determine which sets in  $\{\overline{A_1},...,\overline{A_n}\} $  contain a particular point $iy$ on the $y$-axis. 

Specifically, consider the function ${\rm crit}_w(y):\mathbb R \rightarrow \mathbb Z$ defied by ${\rm crit}_w(y)=k$ if $iy$ is a critical point of $w$ of order $k$, and ${\rm crit}_w(y)=0$ otherwise.  Extend the counting function $N$ defined in \eqref{countingfunction1} to the $y-$axis by setting
\be
\label{countingfunction2}
N(iy_0)=|{\rm ind}_{-\infty}^{y_0}|+ \sum_{y\in(-\infty, y_0]}{\rm crit}_w(y)+1.
\ee
In other words, starting at $-\infty$ and moving up the $y-$axis, $N$ starts off at $1$ and increases by one every time it passes over a point $iy$ which is not a critical point yet ${\rm Re}(w'(i y))=0$. $N$ increases by $k$ every time it passes over a generic critical point of order $k$, and it increases by $k+1$ every time it passes over a non-generic critical point of order $k$. 

The location of a point $iy$   on the $y$-axis can now be determined as follows. If $iy$ is not a critical point, and and $N(iy)$ is an integer $m$, then $iy$ is on the boundary of $A_m$ only. If $iy$ is not a critical point, and $N(iy)=m+\frac12$ for an integer m, then $iy$ lies on a curve in $\mathcal D_0$, and is on the boundary of both $A_m$ and $A_{m+1}$. If $iy$ is   a generic critical point of order $k$, then $iy$ is on the boundary of $A_{N(iy)-k}, A_{N(iy)-k+1},..., A_{N(iy)-1},$ and $A_{N(iy)}$. Finally if $iy$ is a non-generic critical point of order $k$, then $iy$ is on the boundary of $A_{N(iy)-\frac12-k}, A_{N(iy)-\frac12-k+1},..., A_{N(iy)-\frac12},$ and $A_{N(iy)+\frac12}.$ Notice that a generic critical point is on the boundary of $k+1$ regions while a non-generic critical point is on the boundary of $k+2$ regions. 

This gives a complete description of how to use $N$ to determine which regions a point in $\overline {{\mathcal H}_x}$ lies on (or is on the boundary of), which via Proposition \ref{stabilityanalytic}, gives an answer Question \ref{mainquestion} when $z_1 $ lies on the $y$-axis and  $z_2\in {\mathcal H}_x$. The above paragraph, along with the discussion following Lemma \ref{labelinglemma}, allows us to conclude Theorem \ref{Cauchystable}.

\section{Geodesics in the space of functions}
\label{functionspace}

We have now shown how to use the Cachy Index to provide a satisfactory answer to Question \ref{mainquestion}. Our next goal is to relate our notations of stability to infinite dimensional Geometric Invariant Theory. As a first step, we   define our space of functions, and describe its behavior.   We then introduce a metric  and write down the geodesic equation. The space of functions we consider arises from the Calabi Symmetry construction (see Section \ref{DHYMsection}). Since this construction is our main geometric application, and fits nicely into the GIT framework, we choose to work with it here, even if it seems overcomplicated at first.

First we consider  the following function space:

\begin{definition}
Let  $ M_{p,q}$  be the space of all real functions $g(\rho):\mathbb R\rightarrow\mathbb R$ such that the corresponding functions $\hat g_{-\infty},\hat g_{\infty}:(0,\infty)\rightarrow\mathbb R$ defined via
$$\hat g_{-\infty}(e^\rho)=g(\rho)-p\,\rho \qquad{\rm and}\qquad \hat g_{\infty}(e^{-\rho})=g(-\rho)+q\,\rho,$$
can both be extended  smoothly to $[0,\infty)$. This forces $g$ to have the  asymptotic behavior
 \be
 \label{limith}
\lim_{\rho\ra-\infty}\frac{dg}{d\rho}=p,\qquad\lim_{\rho\ra\infty}\frac{dg}{d\rho}=q. 
\ee
  Note that $TM_{p,q}=M_{0,0}$.
\end{definition}

We use the variable $r=e^\rho$ in what follows. First, we fix a background function which plays the role of a potential for a metric. Choose $\mu\in M_{a,b}$ such that  $\frac{d\mu}{d\rho}>0,  \frac{d^2\mu}{d\rho^2}>0$, and in addition assume
 \be
 \label{gassump}
\qquad\frac{d}{dr}\hat \mu_{-\infty}(0)>0\qquad {\rm and}\qquad\frac{d}{dr}\hat \mu_{ \infty}(0)>0.
 \ee
  For notational simplicity set 
 $$ x:=\frac{d\mu}{d\rho}\qquad{\rm and}\qquad {\sigma}:=\frac{dx}{d\rho}= \frac{d^2\mu}{d\rho^2}.$$
 Since $x$ is strictly increasing it can be viewed as a coordinate on $(a,b)$, and each $x$ value uniquely specifies a value of $\rho$. 
 
 Next, we see any function $g(\rho)\in M_{p,q}$ can be used to define a function $f:(a,b)\rightarrow\mathbb R$ via
 \be
 \label{fdef}
 f(x):=\frac{dg(x)}{d\rho}=\frac{dg}{dx}\frac{d x}{d\rho}=\frac{dg}{dx}\sigma.
 \ee
 By \eqref{limith} we see that $f(x)$ extends as  continuous function  to $[a,b]$ with boundary conditions $f(a)=p$, $f(b)=q$. In fact, we can prove all derivatives of $f$ extend to the boundary continuously.

\begin{proposition}
\label{smoothextend}
 $f$ extends to $[a,b]$ as a smooth function up to the boundary. 
 \end{proposition}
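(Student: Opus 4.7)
The plan is to treat each endpoint separately; by the obvious symmetry between $\rho\to-\infty$ (where $x\to a$) and $\rho\to+\infty$ (where $x\to b$), it suffices to prove smoothness at $x=a$. On the open interval $(a,b)$, smoothness of $f$ is immediate: $g$ is smooth on $\mathbb R$, $\sigma=d^2\mu/d\rho^2>0$ makes $\rho\mapsto x$ a smooth diffeomorphism of $\mathbb R$ onto $(a,b)$, and $f=\sigma\,dg/dx$ is a smooth composition.

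For the boundary behavior at $x=a$, I would write $r=e^\rho$ and unpack the definitions of $\hat\mu_{-\infty}$ and $\hat g_{-\infty}$. Near $\rho=-\infty$ we have $\mu(\rho)=\hat\mu_{-\infty}(r)+a\rho$ and $g(\rho)=\hat g_{-\infty}(r)+p\rho$, and since $\frac{d}{d\rho}=r\frac{d}{dr}$, differentiating gives
\begin{equation}
x-a \;=\; r\,\hat\mu_{-\infty}'(r),\qquad f-p \;=\; r\,\hat g_{-\infty}'(r).\nonumber
\end{equation}
Both $\hat\mu_{-\infty}$ and $\hat g_{-\infty}$ are smooth on $[0,\infty)$ by definition of $M_{a,b}$ and $M_{p,q}$, so the right-hand sides are smooth functions of $r$ on some neighborhood of $r=0$.

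The key step is to invert $r\mapsto x$. The derivative of the map $r\mapsto r\hat\mu_{-\infty}'(r)$ at $r=0$ equals $\hat\mu_{-\infty}'(0)$, which is positive by assumption \eqref{gassump}. Hence, by the inverse function theorem, $r$ is a smooth function of the variable $s:=x-a$ on a one-sided neighborhood $[0,\epsilon)$, with $r(0)=0$. Substituting into the expression for $f-p$ shows that $f$ is the composition of two smooth maps, hence smooth on $[a,a+\epsilon)$. The identical argument using $\hat\mu_\infty$ and $\hat g_\infty$ (with the analogous positivity hypothesis $\frac{d}{dr}\hat\mu_\infty(0)>0$) handles the right endpoint. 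Combining with smoothness on $(a,b)$ gives $f\in C^\infty([a,b])$.

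The only place that could even plausibly cause trouble is the invertibility of $r\mapsto x$; everything else is a formal consequence of the smoothness built into the definition of $M_{p,q}$. This confirms that the positivity assumptions \eqref{gassump} on $\hat\mu_{\pm\infty}'(0)$ are exactly what is needed so that $x=d\mu/d\rho$ is a genuine smooth coordinate all the way up to the boundary, and no deeper obstruction appears.
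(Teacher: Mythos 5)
Your proof is correct, and it takes a genuinely cleaner route than the paper. Both proofs turn on the same crucial hypothesis $\hat\mu_{-\infty}'(0)>0$, but they exploit it differently. The paper first invokes Borel's (Peano's) theorem to extend $\hat g_{-\infty}$ and $\hat\mu_{-\infty}$ past $r=0$, writes Taylor expansions $f(r)=p+c_1r+\cdots$, $x(r)=a+b_1r+\cdots$ with $b_1>0$, and then carries out an explicit induction, differentiating the identity $\frac{df}{dr}=\frac{df}{dx}\frac{dx}{dr}$ repeatedly in $r$ and solving for $\lim_{x\to a}\frac{d^nf}{dx^n}$ one order at a time. You instead extract the relations $x-a=r\hat\mu_{-\infty}'(r)$ and $f-p=r\hat g_{-\infty}'(r)$, observe that $\hat\mu_{-\infty}'(0)>0$ makes the first map invertible by the (one-sided) inverse function theorem, and then write $f$ as a smooth composition, getting all derivatives at once. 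Your argument is shorter and more conceptual; it isolates exactly why the positivity assumption \eqref{gassump} is the right hypothesis, namely that it makes $x$ a smooth coordinate up to the endpoint. The paper's order-by-order computation buys nothing extra here, though it does make the relevant coefficients explicit, which could be useful if one later needed quantitative control on the derivatives of $f$ near the boundary.
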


 \begin{proof}
We focus our attention to the boundary point $a$, as extension to  $b$ is similar. For this proof we will use the notation $g', \mu'$, and so forth, to denote a derivative with respect to the variable $\rho.$

First, recall that $\hat g_{-\infty}(r)$ and $\hat \mu_{-\infty}(r)$ extend to smoothly to $[0,\infty)$.  In fact, each function can be extended to a smooth function on $(-\epsilon,\infty)$. To see this, note that  extension to $0$ gives two sequences  $\{\hat g^{(k)}_{-\infty}(0)\}_{k=0}^\infty$, and $  \{\hat \mu^{(k)}_{-\infty}(0)\}_{k=0}^\infty.$ By Borel's theorem (which was actually first proved by Peano, see \cite{Besenyei}) each sequence can be seen as the power series of a smooth function at $0$,  which can be used to extend $\hat g_{-\infty}(r)$ and $\hat \mu_{-\infty}(r)$ to $(-\epsilon,0)$.

We can now apply Taylor's Theorem at $0$, and write for any $k>0$
$$  \hat g_{-\infty}(r)=c_0+c_1 r+\frac{c_2}2 r^2+\frac{c_3}{3!}r^3+\cdots +\frac{c_k}{k!}r^k+R_k(r) r^k,$$
  with $c_k=\hat g^{(k)}_{-\infty}(0)$ and $\lim_{r\rightarrow 0}R_k(r)=0$. Similarly we have
  $$  \hat \mu_{-\infty}(r)=b_0+b_1 r+\frac{b_2}2 r^2+\frac{b_3}{3!}r^3+\cdots +\frac{b_k}{k!}r^k+\tilde R_k(r) r^k,$$
  with $b_k=\hat \mu^{(k)}_{-\infty}(0)$. By \eqref{gassump} we  know that $b_1>0$.  Returning to the definition of $\hat g_{-\infty}(r)$, and taking the first derivative yields
  $$\frac{d\hat g_{-\infty}}{dr}=\frac{d}{dr}\left(g({\rm log}r)-p{\rm log}r\right)= \frac{g'-p}r.$$
  Plugging this into the derivative of the power series, multiplying by $r$, and using that $f=g'$, we see that $f$ as a function of $r$ can be expressed as
\be
\label{powerseriesf}
f(r)=p+c_1 r+\frac{2c_2}2 r^2+\frac{3c_3}{3!}r^3+\cdots +\frac{kc_k}{k!}r^k+R_k(r) r^k.\nonumber
\ee
Similarly the power series for $x$ as a function of $r$ is given by
\be
\label{powerseriesx}
x(r)=a+b_1 r+\frac{2b_2}2 r^2+\frac{3b_3}{3!}r^3+\cdots +\frac{kb_k}{k!}r^k+\tilde R_k(r) r^k.
\ee

Next we verify that the  derivatives of $f(x)$  stay bounded as $x$ approaches $a$. Beginning with the first derivative, we see
\be
\label{fderivr}
\frac{df}{dr}=\frac{df}{dx}\frac{dx}{dr}.
\ee
Taking the limit as $r\rightarrow0$ (which is equivalent to $x\rightarrow a$) gives
$$c_1=\lim_{x\rightarrow a} \frac{df}{dx} b_1,$$
and so $\lim_{x\rightarrow a} \frac{df}{dx}=\frac{c_1}{b_1}<\infty$, since $b_1>0$. For the second derivative of $f(x)$, we take the  derivative of \eqref{fderivr} in $r$ and arrive at
$$\frac{d^2f}{dr^2}=\frac{d^2f}{dx^2}\left(\frac{dx}{dr}\right)^2+\frac{df}{dx}\frac{d^2x}{dr^2}.$$
Again taking the limit as  $r\rightarrow0$ yields
$$2c_2=\lim_{x\rightarrow a}\frac{d^2f}{dx^2}b_1^2+\frac{c_1}{b_1}2b_2,$$
which implies   that $\lim_{x\rightarrow a}\frac{d^2f}{dx^2}=\frac{2c_2}{b_1^2}-\frac{2c_1b_2}{b_1^3}$. We can continue this process and take arbitrary derivatives of \eqref{fderivr} in $r$, and then solve for $\lim_{x\rightarrow a}\frac{d^nf}{dx^n}$ for any $n$, using that $b_1>0$ for each step. This verifies that $f$ extends smoothly to $[a,b]$.

  \end{proof}

 As above we denote the points $z_1=a+ip$, $z_2=b+iq$, and write this set of this data as $\vartheta=\{z_1, z_2\}.$ We now restrict to a smaller class of functions, on which it will be easier to define a metric. First, we need the following assumption, which plays a big role in this section and the section which follows. We assume the point $z_2$ lies on the interior of $A_k$ for some $k$, with ${\rm Re}(w'(z))>0$ in $A_k$. This assumption is quite reasonable, since if $z_2$ were on $\gamma_k$ for some $k$,  then Proposition \ref {stabilityanalytic} would imply $\vartheta$ is unstable. Also, once we assume $z_2\in A_k$, we can always multiply  $w$ by $-1$ to ensure ${\rm Re}(w'(z))>0$.

Now, fix a function $g_0\in M_{p,q}$ with corresponding $f_0$, as in \eqref{fdef}. For any $\phi\in M_{0,0}$, let $f_\phi:=f_0+\frac{d\phi}{dx}\sigma$, which is another function in $M_{p,q}$. Consider the set 
\be
\mathcal M_{\vartheta}:=\{\phi\in M_{0,0}\,|\, \frac{d}{dx}{\rm Re}\left(w\left(x+if_\phi \right)\right)>0\}.\nonumber
\ee
Here we use the subscript $\vartheta$ to emphasize that this set depends on the choice of $g\in M_{a,b}$ and   $f_0\in M_{p,q}$, and thus it depends on the initial points $z_1, z_2$. By a slight abuse of notation we will write both $\phi\in \mathcal M_{\vartheta}$ and $f_\phi\in \mathcal M_{\vartheta}$ for functions in this space.

Our motivation for this definition is given by the following lemma.

\begin{lemma}
\label{lemmaM}
Suppose $\vartheta$ is stable with $z_1,z_2\in A_k$ and  ${\rm Re}(w'(z))>0$ in $A_k$. Let $f_\phi$ lie on $\mathcal C_0$. Then $\phi\in\mathcal M_\vartheta$. 
\end{lemma}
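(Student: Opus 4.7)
\medskip

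\noindent\textbf{Proof proposal.} The plan is a short direct computation together with an observation about which component of $\mathcal C_0$ the graph of $f_\phi$ must lie on. First, write $w'(x+if_\phi(x))=A(x)+iB(x)$ with $A={\rm Re}(w')$, $B={\rm Im}(w')$, and set $s=f_\phi'(x)$. By the chain rule
$$
\frac{d}{dx}w(x+if_\phi(x))=w'(x+if_\phi)(1+if_\phi')=(A-Bs)+i(B+As),
$$
so that
$$
\frac{d}{dx}{\rm Re}(w(x+if_\phi))=A-Bs,\qquad \frac{d}{dx}{\rm Im}(w(x+if_\phi))=B+As.
$$

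Next, I would use the hypothesis that $f_\phi$ lies on $\mathcal C_0$ in two ways. Differentiating the identity ${\rm Im}(w(x+if_\phi(x)))\equiv 0$ in $x$ gives $B+As=0$, so $s=-B/A$ wherever $A\neq 0$. To secure $A>0$ along the whole graph, I would invoke Proposition \ref{onecurveregion}: since $z_1,z_2\in A_k$ and $A_k\cap\mathcal C_0$ is a single connected curve, the (continuous) graph of $f_\phi$ starting at $z_1$ and ending at $z_2$ must coincide with that unique curve restricted to $x\in[a,b]$, and therefore lies entirely in $A_k$. Because $w$ has no critical points in $\mathcal H_x$ and ${\rm Re}(w')>0$ on $A_k$ by hypothesis, we have $A(x)>0$ and $|w'(x+if_\phi(x))|^2=A^2+B^2>0$ on all of $[a,b]$.

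Substituting $s=-B/A$ into $A-Bs$ then gives
$$
\frac{d}{dx}{\rm Re}(w(x+if_\phi(x)))=A+\frac{B^2}{A}=\frac{|w'(x+if_\phi)|^2}{{\rm Re}(w'(x+if_\phi))}>0,
$$
which is exactly the defining inequality of $\mathcal M_\vartheta$. (One should also note that $\phi$ lies in $M_{0,0}$, which is automatic from $f_\phi\in M_{p,q}$ and the definition $f_\phi=f_0+\tfrac{d\phi}{dx}\sigma$ with $f_0\in M_{p,q}$.)

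The only non-computational ingredient is the topological step identifying the graph of $f_\phi$ with $\Gamma_k$; this is where Proposition \ref{onecurveregion} does the work, and I expect that to be the only possible obstacle, though here it is essentially immediate because both endpoints already sit in the interior of $A_k$. The rest is linear algebra for a $2\times 2$ system and a sign check.
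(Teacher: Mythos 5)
Your proof is correct and is essentially the same computation as the paper's: differentiate $\mathrm{Im}(w(x+if_\phi))\equiv 0$ to get the relation between $f_\phi'$, $\mathrm{Re}(w')$ and $\mathrm{Im}(w')$, then substitute into $\frac{d}{dx}\mathrm{Re}(w(x+if_\phi))$ to exhibit it as a manifestly positive multiple of $\mathrm{Re}(w')$. The only difference is cosmetic (you solve for $s=-B/A$ and get $|w'|^2/A$, the paper keeps $f_\phi'$ and gets $(1+(f_\phi')^2)\mathrm{Re}(w')$), and you spell out the topological step from Proposition~\ref{onecurveregion} ensuring $\mathrm{Re}(w')>0$ along the graph, which the paper's proof leaves implicit.
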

\begin{proof}
Because $f_\phi$ lies on $\mathcal C_0$, the function ${\rm Im}\left(w\left(x+if_\phi \right)\right) $ is constant in $x$, and so $\frac{d}{dx}{\rm Im}\left(w\left(x+if_\phi \right)\right)=0$. Bringing in the derivative we see
\be
0= {\rm Im}\left(w'\left(x+if_\phi \right)(1+i\frac{df_\phi}{dx})\right)={\rm Im}\left(w'\left(x+if_\phi \right) \right)+\frac{df_\phi}{dx}{\rm Re}\left(w'\left(x+if_\phi \right) \right).\nonumber
\ee
The above equation now implies
\bea
\frac{d}{dx}{\rm Re}\left(w\left(x+if_\phi \right)\right)&=& {\rm Re}\left(w'\left(x+if_\phi \right)(1+i\frac{df_\phi}{dx})\right)\nonumber\\
&=&{\rm Re}\left(w'\left(x+if_\phi \right) \right)-\frac{df_\phi}{dx}{\rm Im}\left(w'\left(x+if_\phi \right) \right)\nonumber\\
&=&{\rm Re}\left(w'\left(x+if_\phi \right) \right)+\left(\frac{df_\phi}{dx}\right)^2{\rm Re}\left(w'\left(x+if_\phi \right) \right)>0.\nonumber
\eea

\end{proof}

Thus $\mathcal M_{\vartheta}$ is non-empty in the stable case. We will also see it is non-empty in the semistable case, and can be either empty or non-empty in the unstable case. For the remainder of this section, to ease notation we denote by $z_\phi=x+if_\phi$ the set of points on the graph of $f_\phi$. We also introduce the set   $\tilde {\mathcal D}_0:=\{z\,|\,{\rm Im}(w'(z))=0\}$, which does not intersect $\mathcal D_0$ in ${\mathcal H}_x$.  Looking at the tangent cone at infinity, the curves in $\tilde{ \mathcal D_0}$ are asymptotic to rays that  alternate with those from $T^\infty_{\mathcal D_0}$. As a result each region $A_\ell$ must contain exactly one curve from the set $\tilde{ \mathcal D_0}$, which we denote by $\tilde \gamma_\ell$. The only   exception is $A_1$ and $A_n$, which may or may not contain a curve from $\tilde{ \mathcal D_0}$.

\begin{proposition}
\label{semistable}
Suppose $\vartheta$ is semistable. Then $\mathcal M_{\vartheta}$ is non-empty.
\end{proposition}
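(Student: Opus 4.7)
The plan is to produce an explicit smooth function $\tilde f:[a,b]\to\mathbb R$ with $\tilde f(a)=p$, $\tilde f(b)=q$ satisfying the positivity inequality $\frac{d}{dx}{\rm Re}(w(x+i\tilde f(x)))>0$ wherever this quantity is not forced to vanish by the critical structure, and then observe that any such $\tilde f$ is of the form $f_\phi$ for a (unique) $\phi\in M_{0,0}$ via $\phi'=(\tilde f-f_0)/\sigma$, yielding the desired element of $\mathcal M_\vartheta$. In the stable case this is immediate: Proposition \ref{stabilityanalytic} supplies a smooth graph $\tilde f=f$ lying on $\mathcal C_0$, and Lemma \ref{lemmaM} gives $\frac{d}{dx}{\rm Re}(w(x+if))={\rm Re}(w')(1+(f')^2)>0$ on $A_k$.

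In the strictly semistable case Proposition \ref{stabilityanalytic} gives only a continuous $f$ lying on $\mathcal C_0$, smooth on $(a,b]$, whose slope blows up as $x\to a^+$. The plan is to regularize $f$ near $a$: pick small $\epsilon>0$, a cutoff $\eta$ with $\eta\equiv 0$ on $[a,a+\epsilon/2]$ and $\eta\equiv 1$ on $[a+\epsilon,b]$, and a smooth reference function $\ell:[a,b]\to\mathbb R$ with $\ell(a)=p$ and finite derivative $s:=\ell'(a)$ to be chosen. Set
$$\tilde f(x)=\eta(x)\,f(x)+\bigl(1-\eta(x)\bigr)\,\ell(x).$$
Then $\tilde f$ is smooth on $[a,b]$, agrees with $f$ on $[a+\epsilon,b]$ so Lemma \ref{lemmaM} still handles positivity there, and $\tilde f(a)=p$. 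What remains is to choose $\ell$ and $\epsilon$ so that the linear inequality ${\rm Re}(w'(z))-\tilde f'(x){\rm Im}(w'(z))>0$ holds on the transition interval, with $z=x+i\tilde f(x)$.

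In Proposition \ref{stabilityanalytic} subcase (i), $w'(z_1)$ is nonzero and purely imaginary; assuming WLOG ${\rm Im}(w'(z_1))>0$, the upper envelope ${\rm Re}(w')/{\rm Im}(w')$ is small positive near $z_1$ while the natural $\mathcal C_0$-slope $-{\rm Im}(w')/{\rm Re}(w')$ tends to $-\infty$, so taking any finite negative $s$ and shrinking $\epsilon$ keeps $\tilde f'(x)<0$ throughout and positivity follows. In subcase (iii), $z_1$ is a non-generic critical point of order $k$ with $w'(z)=\tilde\beta(z-z_1)^k+O(|z-z_1|^{k+1})$, where $\tilde\beta\in\mathbb R$ for $k$ odd and $\tilde\beta\in i\mathbb R$ for $k$ even by the discussion following Lemma \ref{curveinlemma}. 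Writing $\tilde f(x)-p=sx+O(x^2)$, the Taylor expansion yields
$$\frac{d}{dx}{\rm Re}\bigl(w(x+i\tilde f(x))\bigr)=x^k(1+s^2)\,{\rm Re}\bigl(\tilde\beta(1+is)^{k-1}\bigr)+O(x^{k+1}),$$
which is positive on $(a,a+\epsilon]$ for small $\epsilon$ provided ${\rm Re}(\tilde\beta(1+is)^{k-1})>0$. For $k$ odd this holds for any sufficiently small $s$; for $k$ even it pins down the sign of $s$.

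The delicate step, and the main obstacle, is subcase (iii) with $k$ even: one must verify that the sign of $s$ forced by ${\rm Re}(\tilde\beta(1+is)^{k-1})>0$ coincides with the sign forced by requiring $\tilde f$ to enter the outer component of $\mathcal C_0$ through $z_1$ along which the continuous $f$ travels to $z_2$. This is a compatibility check relating the tangent cone $T_{\mathcal C_0}^{z_1}$, whose outer rays are identified in the proof of Proposition \ref{stabilityanalytic}(iii), to the argument of $\tilde\beta$ arising from $T_{\mathcal D_0}^{z_1}$; the two match because the tangent cone of $w$ at $z_1$ is the antiderivative of the tangent cone of $w'$ at $z_1$, as noted in the discussion before Lemma \ref{curveinlemma}. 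Once this compatibility is established, the smoothed $\tilde f$ lies in $\mathcal M_\vartheta$, completing the proof.
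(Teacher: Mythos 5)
Your overall idea — replace the infinitely steep part of $f$ near $z_1$ by a finite-slope piece and smoothly join it to $f$ further out — is the same as the paper's. What differs is the verification that the regularized function lands in $\mathcal M_\vartheta$. The paper rewrites $\frac{d}{dx}{\rm Re}(w(x+if_\phi))=\nabla u\cdot\vec T$, where $u={\rm Re}(w)$ and $\vec T$ is the unit tangent to the graph, and then builds the candidate by running a straight segment $L$ from $z_1$ directly to a nearby point $\Gamma_k(t^*)$ on the curve (inside a ball where $u_x>0,u_y>0$) and then following $\Gamma_k$; because $\nabla u$ is parallel to $\Gamma_k'$ and $L$ points toward $\Gamma_k(t^*)$, the inequality $\nabla u\cdot\vec T>0$ is automatic for both pieces and survives the corner smoothing. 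You instead blend a fixed linear function $\ell$ against $f$ via a cutoff and try to verify the inequality analytically by Taylor-expanding $w'$ at $z_1$. This route can in principle work, but as written it has two problems.

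First, the Taylor coefficient is off. Writing $z-z_1=(x-a)(1+is)+O((x-a)^2)$ and $w'(z)=\tilde\beta(z-z_1)^k+O(|z-z_1|^{k+1})$, one gets
\[
\frac{d}{dx}{\rm Re}\bigl(w(x+i\tilde f(x))\bigr)={\rm Re}\bigl(w'(z)(1+i\tilde f')\bigr)=(x-a)^k\,{\rm Re}\bigl(\tilde\beta(1+is)^{k+1}\bigr)+O\bigl((x-a)^{k+1}\bigr),
\]
i.e. the factor is $(1+is)^{k+1}$, not $(1+s^2)(1+is)^{k-1}$. (These happen to give the same sign condition on $s$ to leading order in small $s$, so the qualitative conclusion survives, but the identity as stated is wrong.) Second, and more seriously, the step you yourself flag as "the delicate step" — checking that the sign of $s$ forced by ${\rm Re}(\tilde\beta(1+is)^{k+1})>0$ agrees with the sign needed for $\tilde f$ to head into $A_k$ toward $z_2$, and that this sign also keeps the graph in a region where the local expansion applies — is not actually carried out; you assert it follows from the antiderivative relation between the tangent cones, but that is precisely the content that needs to be established. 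Relatedly, your expansion controls the sign only for $x$ near $a$, whereas the cutoff mixes $f$ and $\ell$ on $[a+\epsilon/2,a+\epsilon]$, where the term $\eta'(f-\ell)$ enters $\tilde f'$ and must be shown not to spoil the inequality (one needs to argue all three contributions to $\tilde f'$ have the right sign there and that the graph stays where ${\rm Im}(w')$ keeps its sign). The paper's construction avoids all of this: by aiming $L$ at a point of $\Gamma_k$ itself, the needed sign compatibility is geometrically built in and no pointwise expansion is required.
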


\begin{proof}
Fix a region $A_k$ where ${\rm Re}(w'(z))>0$.  Let $z_1$ lie on the boundary of $A_k$, and  $z_2\in A_k$.  By definition, any $f_\phi\in \mathcal M_\vartheta$ satisfies
\be
\label{inH}
\frac{d}{dx}{\rm Re} (w (z_\phi ) )= {\rm Re}  (w' (z_\phi  )  )-\frac{df_\phi}{dx}{\rm Im}(w'(z_\phi  ))>0.
\ee
Writing $w(z)=u +i v $, then $w'(z)=v_y+i v_x=u_x-i u_y$, and we see \eqref{inH} is equivalent to the condition that $\nabla u\cdot\vec{T}>0,$ where $\vec{T}:=\frac{\partial}{\partial x}+\frac{df_\phi}{dx}\frac{\partial}{\partial y}$ is the tangent vector to the graph of $f_\phi$.

In this case we cannot simply use the function on $\mathcal C_0$ connecting $z_1$ to $z_2$, since this function has vertical slope at $z_1$ and thus is not in $M_{p,q}$. Instead we argue as follows. Parametrize the curve   $\Gamma_k:[t_1,t_2]\rightarrow \mathcal C_0$ so $\Gamma_k(t_1)=z_1$ and $\Gamma_k(t_2)=z_2$. Without loss of generality assume $\Gamma_k'(t_1)=\frac{\partial}{\partial y}$ (otherwise we can just preform a mirror argument). The discussion following Proposition  \ref{onecurveregion} (or Lemma \ref{curveinlemma}) gives that $\Gamma_k$ stays to the right of the vertical line $x=a.$

Now, since the curve $\ti\gamma_k$ does not intersect $\gamma_k$, we can choose a ball $B_\delta(z_1)$ so that $B_\delta(z_1)\cap\ti\gamma_k=\varnothing.$ Choose $t^*$ small so $\Gamma_k([t_1,t^*])$ lies in $B_\delta(z_1)$. Now, since level sets of $u$ are always perpendicular to level sets of $v$, we know that $\nabla u$ points in the direction of $\Gamma_k'$ or $-\Gamma_k'$.  Lemma \ref{lemmaM} and \eqref{inH} imply that $\nabla u$ points in the direction of $\Gamma_k'$ and since $\Gamma'(t_1)=\frac{\partial}{\partial y}$ we see $u_y(z_1)>0$.   But since $\ti \gamma_k$ corresponds to the set where $\{u_y=0\}$, and this set does not intersect $B_\delta(z_1)$, it follows that $u_y>0$  in the whole ball. Furthermore, since ${\rm Re}(w'(z))>0$ in $A_k$, we have $u_x>0$ in $A_k\cap B_\delta(z_1)$.

Create a segment $L$ connecting $z_1$ to $\Gamma(t^*)$. Since $\Gamma(t^*)$ is to the right of the line ${\rm Re}(z)=a,$ we see right away that the tangent vector $\vec T$ to $L$  will have positive dot product with $\nabla u$. Next define a function $\hat f$ by first following $L$ from $z_1$ to $\Gamma_k(t^*)$, and then following $\Gamma_k(t)$ from  $\Gamma_k(t^*)$ to $\Gamma_k(t_2)=z_2$. We can now construct $f_\phi\in M_{p,q}$ by smoothing out $\hat f$ at the corner. Because the tangent vector to our segment $L$, and the tangent vector to $\Gamma_k(t)$, both have positive dot product with $\nabla u$ at this point, the smoothing $f_\phi$ can be chosen in such a way that preserves this positivity, and thus $f_\phi\in\mathcal M_\vartheta.$

\end{proof}

We now prove some results to help us get a better sense of what functions in  $\mathcal M_{\vartheta}$ look like.

\begin{lemma}
\label{crossinglemma1}
A function $f_\phi\in\mathcal M_{\vartheta}$ can not cross the curve $\tilde \gamma_\ell$ in a set $A_\ell$ where ${\rm Re}(w'(z))<0$.
\end{lemma}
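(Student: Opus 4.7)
The plan is to obtain a contradiction by evaluating the defining positivity condition of $\mathcal M_\vartheta$ at a putative crossing point. Recall from the computation preceding \eqref{inH} (performed inside the proof of Proposition \ref{semistable}) that the condition $\phi\in\mathcal M_\vartheta$ is equivalent to the pointwise inequality
\be
\text{Re}(w'(z_\phi))-\frac{df_\phi}{dx}\,\text{Im}(w'(z_\phi))>0\nonumber
\ee
holding along the entire graph $z_\phi=x+if_\phi(x)$, $x\in[a,b]$.

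Suppose for contradiction that $f_\phi\in\mathcal M_\vartheta$ crosses $\tilde\gamma_\ell$ at a point $z^{*}=x^{*}+if_\phi(x^{*})$ lying in $A_\ell$, where by hypothesis $\text{Re}(w'(z))<0$. Since $z^{*}\in\tilde\gamma_\ell\subset\{z\mid \text{Im}(w'(z))=0\}$, the term involving $\frac{df_\phi}{dx}$ in the displayed inequality vanishes at $x^{*}$, so the inequality collapses to
\be
\text{Re}(w'(z^{*}))>0.\nonumber
\ee
But $z^{*}\in A_\ell$ and $A_\ell$ is a connected component of ${\mathcal H}_x\setminus\mathcal D_0$ on which $\text{Re}(w')$ has constant (negative) sign, yielding $\text{Re}(w'(z^{*}))<0$, a contradiction.

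There is essentially no obstacle here; the lemma is immediate once one writes out the derivative of $\text{Re}(w(z_\phi))$ in the defining inequality of $\mathcal M_\vartheta$ and observes that on $\tilde\gamma_\ell$ the slope term drops out. The only minor point to mention is that the crossing point is interior to $A_\ell$ (so strict negativity of $\text{Re}(w')$ applies), which is clear since $\tilde\gamma_\ell$ is a curve lying inside $A_\ell$ and disjoint from its boundary curves $\gamma_{\ell-1},\gamma_\ell\subset\mathcal D_0$.
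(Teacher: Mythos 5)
Your proof is correct and uses exactly the same argument as the paper: plug the crossing point into the defining inequality $\frac{d}{dx}\mathrm{Re}(w(z_\phi))=\mathrm{Re}(w'(z_\phi))-\frac{df_\phi}{dx}\mathrm{Im}(w'(z_\phi))>0$, note that $\mathrm{Im}(w'(z_\phi))=0$ on $\tilde\gamma_\ell$, and contradict $\mathrm{Re}(w')<0$ in $A_\ell$. The small remark you add about the crossing point lying in the interior of $A_\ell$ is a nice clarification but not a departure from the paper's reasoning.
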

\begin{proof}
By definition, any $f_\phi\in \mathcal M_\vartheta$ satisfies
\be
\frac{d}{dx}{\rm Re} (w (z_\phi ) )= {\rm Re}  (w' (z_\phi  )  )-\frac{df_\phi}{dx}{\rm Im}(w'(z_\phi  ))>0.\nonumber
\ee
At any point on $\ti \gamma_{\ell}$  we have ${\rm Im}(w'(z_\phi  ))=0$. If in addition ${\rm Re}(w'(z))<0$, then any curve $f_\phi$ that crosses $\ti \gamma_{\ell}$ satisfies $\frac{d}{dx}{\rm Re} (w (z_\phi ) )<0$,  so $f_\phi$ can not be in $\mathcal M_\vartheta$.
\end{proof}

The following lemma will also be quite useful.
\begin{lemma}
\label{crossinglemma2}
Let $\vartheta$ be simi-stable, with $z_1, z_2\in\overline{A_k}$, and ${\rm Re}(w'(z))>0$ in $A_k$. Then any $f_\phi\in \mathcal M_\vartheta$ can not intersect $\mathcal C_0$ outside of $\overline{A_k}$.
\end{lemma}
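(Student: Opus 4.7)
The plan is to argue by contradiction. Suppose $f_\phi\in\mathcal M_\vartheta$ meets $\mathcal C_0$ at some $P=z_\phi(x_0)$ with $x_0\in(a,b)$ and $P\notin\overline{A_k}$. By Proposition~\ref{onecurveregion}, $P$ lies on a unique $\Gamma_\ell$ with $\ell\neq k$, and by the symmetry of the argument I may take $\ell>k$. I will first treat the innermost case $\ell=k+1$, then reduce the general case to it. Continuity of $f_\phi$, together with $f_\phi(a)=z_1\in\overline{A_k}$ and $f_\phi(b)=z_2\in A_k$, forces $f_\phi$ to enter $A_{k+1}$ through $\gamma_k$ at some $x_1<x_0$ and to return through $\gamma_k$ at some $x_2\in(x_0,b)$.

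The heart of the argument is a sign analysis on $(x_1,x_2)$. The alternating sign of ${\rm Re}(w'(z))$ across $\gamma_k$ gives $u_x<0$ in $A_{k+1}$, so the defining condition $u_x+u_yf_\phi'>0$ of $\mathcal M_\vartheta$ forces $u_yf_\phi'>|u_x|>0$; thus $u_y$ and $f_\phi'$ share a common nonzero sign along $f_\phi$ in $A_{k+1}$. Lemma~\ref{crossinglemma1} forbids $f_\phi$ from crossing $\tilde\gamma_{k+1}$ in $A_{k+1}$, so $u_y$, and hence $f_\phi'$, has constant sign on $(x_1,x_2)$. Taking both positive, I compute
\[
\tfrac{d}{dx}v(z_\phi(x))=-u_y+u_xf_\phi'<0\text{ on }(x_1,x_2),
\]
so $v(z_\phi)$ is strictly decreasing through zero at $x_0$, giving $v(z_\phi(x_1))>0>v(z_\phi(x_2))$. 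Along $\gamma_k$ itself, $\tfrac{d}{dx}v=v_x=-u_y<0$ as well, so $v$ is strictly decreasing along $\gamma_k$; by the intermediate value theorem there exists $x_Q\in(x_1,x_2)$ with $Q_k:=\gamma_k(x_Q)\in\gamma_k\cap\mathcal C_0$, a vertical tangent point of $\mathcal C_0$.

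Since $Q_k$ lies on $\overline{\Gamma_k}$ (and on $\overline{\Gamma_{k+1}}$) and $\mathcal C_0$ has a vertical tangent there, $Q_k$ is the non-asymptotic endpoint of the graph $\Gamma_k\cap A_k$, which therefore has $x$-range $[x_Q,\infty)$. Hence $z_1,z_2\in\overline{\Gamma_k\cap A_k}$ forces their $x$-coordinates to satisfy $a,b\geq x_Q$, contradicting $a<x_1<x_Q$ and closing the case $\ell=k+1$. For $\ell>k+1$, the same sign analysis should apply on the final departure of $f_\phi$ from $A_{k+1}$ before its return to $A_k$: the constraints on $\gamma_k$ do not depend on where $f_\phi$ wanders between the two $A_{k+1}$-visits, and the monotonicity of $v$ along $\gamma_k$ still supplies the forbidden $Q_k$. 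The main obstacle will be verifying that Lemma~\ref{crossinglemma1} still applies on this terminal $A_{k+1}$-stretch (it does, since the lemma is local to $A_{k+1}$); as a fallback I would iterate across the successive boundaries $\gamma_{k+1},\gamma_{k+2},\ldots$ visited by $f_\phi$, using the alternating signs of $u_x$ and the monotonicity of $v$ on each $\gamma_j$ in turn to locate an analogous forbidden vertical tangent point.
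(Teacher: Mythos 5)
Your proof of the core case $\ell=k+1$ is correct but takes a genuinely different route from the paper. The paper's argument is more algebraic: it observes (via Lemma~\ref{crossinglemma1}) that $f_\phi$ is confined between $\tilde\gamma_{k-1}$ and $\tilde\gamma_{k+1}$, so any intersection with $\mathcal C_0$ outside $\overline{A_k}$ must be with $\Gamma_{k\pm1}$; then a single intersection gives tangency and two or more give a Rolle point, so that in either case there is a point in $A_{k+1}$ with $\tfrac{d}{dx}\,{\rm Im}(w(z_\phi))=0$, whence the computation from Lemma~\ref{lemmaM} yields $\tfrac{d}{dx}\,{\rm Re}(w(z_\phi))={\rm Re}(w'(z_\phi))\bigl(1+(f_\phi')^2\bigr)<0$, contradicting $\phi\in\mathcal M_\vartheta$. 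Your argument instead exploits the sign structure of $u_x,u_y$ to make $v(z_\phi)$ strictly monotone on the $A_{k+1}$ excursion, then transfers the resulting sign change to $\gamma_k$ to manufacture a vertical-tangent point $Q_k\in\gamma_k\cap\mathcal C_0$ with $x$-coordinate $x_Q>a$; since $Q_k$ would then be the unique non-asymptotic endpoint of $\Gamma_k$, the semistability requirement $z_1\in\overline{\Gamma_k}$ with ${\rm Re}(z_1)=a<x_Q$ is violated. This is a nice geometric alternative, and it has the side benefit of directly tying the contradiction to Proposition~\ref{onecurveregion} and the uniqueness of $\gamma_k\cap\mathcal C_0$ established there.

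The genuine weakness is your treatment of $\ell>k+1$. You acknowledge that the monotonicity of $v(z_\phi)$ used in the sign analysis is only valid while $f_\phi$ actually lies in $A_{k+1}$, and you then propose either to repeat the argument on ``the final $A_{k+1}$-stretch'' or to iterate across $\gamma_{k+1},\gamma_{k+2},\dots$; neither is carried out. In fact no iteration is needed: Lemma~\ref{crossinglemma1} already forbids $f_\phi$ from crossing $\tilde\gamma_{k+1}$, which lies strictly between $\gamma_k$ and $\gamma_{k+1}$, so $f_\phi$ can never reach $\gamma_{k+1}$ and the case $\ell>k+1$ (and symmetrically $\ell<k-1$) is vacuous. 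This is precisely how the paper opens its proof, and adopting that one observation would both close your gap and render your reduction argument unnecessary. With that correction your proof is sound.
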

\begin{proof}
Although the boundary values of $f_\phi$ lie in $\overline{A_k}$, $f_\phi$ may  leave this set. However, by Lemma \ref{crossinglemma1}, $f_\phi$ can not cross $\tilde \gamma_{k-1}$ or $\tilde \gamma_{k+1}$, and so if $f_\phi$ intersects another curve from $\mathcal C_0$ outside of $A_k$, it must occur in either $A_{k-1}$ or $A_{k+1}$. Without loss of generality, assume $f_\phi$ intersects the curve $\Gamma_{k+1}$ in $A_{k+1}$. If this intersection occurs at one point, then $f_\phi$ must be tangent to $\Gamma_{k+1}$, and thus at this point $\frac{d}{dx}{\rm Im}(w(z_\phi))=0$. As in the proof of Lemma \ref{lemmaM},  it follows that
$$\frac{d}{dx}{\rm Re}(w ( z_\phi))={\rm Re} (w'(z_\phi))+\left(\frac{df_\phi}{dx}\right)^2{\rm Re} (w'(z_\phi) ).$$
However in $A_{k+1}$ we have ${\rm Re} (w'(z_\phi) )<0$, which implies $\frac{d}{dx}{\rm Re}( ( z_\phi))<0$, a contradiction.

Now suppose that $f_\phi$ intersects $\Gamma_{k+1}$ in $A_{k+1}$ at more than one point. Choose two intersection points and label them as $\zeta_1$ and $\zeta_2$.  Then because ${\rm Im}(w(\zeta_1))={\rm Im}(w(\zeta_2))=0$, the function $x\mapsto {\rm Im}(w(z_\phi))$ must have a critical point for some $x\in[{\rm Re}(\zeta_1),{\rm Re}(\zeta_2)]$. At this point  $\frac{d}{dx}{\rm Im}(w(z_\phi))=0$, and as above we arrive at a contradiction. Thus no such intersection can occur. 

\end{proof}
 
We now turn to the behavior of $\mathcal M_\vartheta$ in the unstable case.

\begin{proposition}
\label{unstable}
Suppose $\vartheta$ is unstable, and  $z_2\in A_k$ with ${\rm Re}(w'(z))>0$. If $z_1$ lies in $A_{k-1}$ or $ A_{k+1}$, $\mathcal M_{\vartheta}$ may be empty or non-empty. If $z_1$ lies outside of $A_{k-1}$ or $A_{k+1}$, then  $\mathcal M_{\vartheta}$ is empty. 
\end{proposition}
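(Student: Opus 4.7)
The statement has two parts, and I would handle them in reverse order.

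For the case $z_1 \notin A_{k-1} \cup A_{k+1}$, the plan is to combine Lemma \ref{crossinglemma1} with a topological separation argument. The first key step is to verify that each $\tilde\gamma_\ell$ separates its region $A_\ell$ into two connected subregions: one whose closure meets $\gamma_{\ell-1}$ (the common boundary with $A_{\ell-1}$) and one whose closure meets $\gamma_\ell$ (the common boundary with $A_{\ell+1}$). This follows from the asymptotic behavior established in Section \ref{prelimsection}---the asymptotic ray of $\tilde\gamma_\ell$ lies strictly between those of $\gamma_{\ell-1}$ and $\gamma_\ell$---together with the fact that $\tilde\gamma_\ell$ cannot form a closed loop (being a connected component of the level set $\{{\rm Im}(w')=0\}$ of a harmonic function in the unbounded simply connected region $A_\ell$) and that its non-asymptotic end must lie on the $y$-axis or at a critical point of $w$. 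Once this separation is in hand, any graph $f_\phi$ connecting $z_1 \in A_m$ (with $|m-k|\geq 2$, or $z_1$ on a boundary curve $\gamma_j$ similarly far from $A_k$) to $z_2 \in A_k$ must, within some intermediate region $A_\ell$ where ${\rm Re}(w')<0$, transition between the two components of $A_\ell\setminus\tilde\gamma_\ell$. This forces a crossing of $\tilde\gamma_\ell$, which is prohibited by Lemma \ref{crossinglemma1}, so $\mathcal M_\vartheta = \varnothing$.

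For the adjacent case, the plan is to exhibit both possibilities. Working with $z_1 \in A_{k+1}$ (the case $z_1 \in A_{k-1}$ is symmetric), if $z_1$ lies in the component of $A_{k+1}\setminus\tilde\gamma_{k+1}$ bordering $\gamma_k$ (the boundary with $A_k$), I would mimic the argument of Proposition \ref{semistable}: connect $z_1$ to a point in $A_k$ close to $\gamma_k$ via a short segment with appropriately signed slope, then follow $\Gamma_k$ through $A_k$ until near $z_2$, and smooth the resulting corner while preserving positivity of $\frac{d}{dx}{\rm Re}(w(z_\phi))$. The slope and smoothing can be arranged precisely because the signs of $u_x$ and $u_y$ in the relevant portion of $A_{k+1}$ are controlled as in the proof of Lemma \ref{labelinglemma}. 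This shows $\mathcal M_\vartheta \neq \varnothing$ is possible. If instead $z_1$ lies in the component of $A_{k+1}\setminus\tilde\gamma_{k+1}$ bordering $\gamma_{k+1}$, the separation argument from the previous paragraph applies verbatim to conclude $\mathcal M_\vartheta = \varnothing$.

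The main obstacle is establishing the topological separation of $A_\ell$ by $\tilde\gamma_\ell$ rigorously, particularly when critical points of $w$ on the $y$-axis cause multiple curves of $\tilde{\mathcal D}_0$ to converge (analogous to the non-generic behavior analyzed in Lemma \ref{curveinlemma}). A secondary subtlety is that the graph of $f_\phi$ may enter a given region $A_\ell$ several times as $x$ increases, oscillating across the boundary curves $\gamma_j$; the argument must track which component of $A_\ell\setminus\tilde\gamma_\ell$ the graph lies in at each re-entry, using that $f_\phi$ is defined on the compact interval $[a,b]$ so the graph is confined to the strip $\{a\le {\rm Re}(z)\le b\}$ and cannot escape via the $y$-axis when $a>0$.
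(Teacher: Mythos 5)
Your approach matches the paper's proof in all essential respects: emptiness when $z_1$ lies two or more regions away from $A_k$ follows from Lemma \ref{crossinglemma1} together with the fact that any connecting graph must cross $\tilde\gamma_{k\pm1}$ inside a region where ${\rm Re}(w')<0$, and for the adjacent case both you and the paper exhibit configurations with $z_1$ on the near side of $\tilde\gamma_{k\pm1}$ (yielding a graph via a steep segment spliced onto $\Gamma_k$) and on the far side (forcing a forbidden crossing). Your explicit justification that $\tilde\gamma_\ell$ separates $A_\ell$ into two pieces---via the alternating asymptotics, the maximum principle ruling out closed level curves of the harmonic function ${\rm Im}(w')$, and the location of the non-asymptotic end---is a step the paper treats as self-evident and is worth spelling out; the one point to tighten is that the steep segment must end \emph{on} $\Gamma_k$ (not merely "close to $\gamma_k$") and must remain on the $\gamma_k$-side of $\tilde\gamma_k$ so that $u_y$ keeps a fixed sign along it, which the paper secures by choosing $z_1$ close enough to $\gamma_{k-1}$ that $\Gamma_k$ meets the line $x=a$ below $\tilde\gamma_k$.
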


\begin{proof}
First, suppose $z_1$ lies outside of $A_{k-1}$ or $A_{k+1}$. Note that in both  $A_{k-1}$ and $A_{k+1}$ we have ${\rm Re}(w'(z))<0$, and furthermore any curve connecting $z_2$ to  $z_1$   must cross  $\ti\gamma_{k-1}$ or  $\ti\gamma_{k+1}$. By Lemma \ref{crossinglemma1} such a curve cannot be in $\mathcal M_{\vartheta}$, so $\mathcal M_{\vartheta}$ is empty in this case.

To see that $\mathcal M_{\vartheta}$ can be empty even when $z_1$ lies in an adjacent region $A_{k-1}$ or $A_{k+1}$, it is easy to find examples where portions of  the curve $\Gamma_{k+1}$ lie on the opposite side of $\ti\gamma_{k+1}$ as $z_2$. Thus  by choosing $z_1$ appropriately Lemma \ref{crossinglemma1} again implies $\mathcal M_{\vartheta}$ is empty.

We now show it is also possible for $\mathcal M_\vartheta$ to be nonempty in this case. In the discussion following Proposition \ref{onecurveregion}, we saw adjacent regions may   share a connected component of $\mathcal C_0$, and we assume this to be the case. Without loss of generality let $z_1\in A_{k-1}$. Our picture is now as follows. Starting at $z_1$, we can follow the curve $\Gamma_{k-1}$ in the negative $x$-direction, which heads towards $\gamma_{k-1}$ (the boundary between $A_{k-1}$ and $A_k$) and achieves vertical slope. At this point the curve  becomes $\Gamma_k$ and heads back towards the asymptotic ray in $A_k$ for $|z|$ large.

Since any curve in $\mathcal M_\vartheta$ can not cross $\ti\gamma_{k-1}$,   we assume $z_1$ lies between $\ti \gamma_{k-1}$ and $\gamma_{k-1}$. Furthermore, assume the intersection of the line $x=a$ with $\Gamma_k$ occurs at a point $\zeta$ which lies below $\ti\gamma_{k}$ (certainly this can be done by choosing $z_1$ close to $\gamma_{k-1}$). Following the same argument as in Proposition \ref{semistable}, we know $u_y>0$ in a small ball around $z_1$. This then implies $u_y>0$ between $\ti\gamma_{k-1}$ and $\ti\gamma_k$. Construct a linear function     $L$ connecting  $z_1$ to a point on $\Gamma_k$ in $A_k$,  which can be done so $L$ has large enough slope to ensure that the tangent vector $\vec T$ to $L$  will have positive dot product with $\nabla u$ over this segment. Then, just as in Proposition   \ref{semistable}, we can create a function $\hat f$ by first following $L$ from $z_1$ to $\Gamma_k$, and then following $\Gamma_k$ to $z_2$. This gives a function $f_\phi\in\mathcal M_\vartheta$  by smoothing out $\hat f$ at the corner. 
\end{proof}

We are now ready to introduce a metric on $\mathcal M_\vartheta$ and write down the geodesic equation.  For any point $\phi\in\mathcal M_\vartheta$, the tangents space satisfies $T_\phi\mathcal M_\vartheta=M_{0,0}$. For $\psi_1,\psi_2\in T_\phi\mathcal M_\vartheta$ (which again we can always regard as functions of $x$), we define the following metric
\be
\langle \psi_1,\psi_2\rangle=\int_a^b\psi_1\psi_2 \frac{d}{dx}{\rm Re}\left(w (z_\phi)\right)dx.\nonumber
\ee
From this metric we can write down the geodesic equation.
\begin{proposition}
A smooth  curve $\phi(t)\in\mathcal M_\vartheta$ is a geodesic if it solves 
$$0=\ddot \phi  \frac{d}{dx}{\rm Re}(w (z_\phi))+\left(\frac{d\dot\phi}{dx}\right)^2\sigma  {\rm Im} (w' (z_\phi) ).$$
\end{proposition}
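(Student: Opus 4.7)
The plan is to derive the stated ODE as the Euler--Lagrange equation for the energy functional
$$E(\phi(\cdot)) = \int_0^1 \langle \dot\phi(t),\dot\phi(t)\rangle_{\phi(t)}\,dt = \int_0^1\!\int_a^b \dot\phi^2\,\frac{d}{dx}{\rm Re}(w(z_\phi))\,dx\,dt$$
associated to the given Riemannian metric on $\mathcal M_\vartheta$. I would compute the first variation $\phi \mapsto \phi + s\eta$ with $\eta \in M_{0,0}$ vanishing at $t=0,1$, and read off the critical-point equation.

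The key preliminary calculation is the infinitesimal change of the integrand under a variation of $\phi$. Since $f_\phi = f_0 + \frac{d\phi}{dx}\sigma$, one has $z_{\phi+s\eta} = z_\phi + is\frac{d\eta}{dx}\sigma$ to first order, so $\frac{d}{ds}\big|_{s=0}{\rm Re}(w(z_{\phi+s\eta})) = -{\rm Im}(w'(z_\phi))\,\sigma\,\frac{d\eta}{dx} =: -H\,\frac{d\eta}{dx}$, where $H := \sigma\,{\rm Im}(w'(z_\phi))$. The same computation with $s$ replaced by $t$ (along the path) yields $\frac{d}{dt}{\rm Re}(w(z_\phi)) = -H\,\frac{d\dot\phi}{dx}$, and hence $\frac{d}{dt}\frac{d}{dx}{\rm Re}(w(z_\phi)) = -\frac{d}{dx}\big(H\,\frac{d\dot\phi}{dx}\big)$.

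With these in hand, the variation $\delta E$ splits into two pieces, the first coming from the $\dot\phi^2$ factor and the second from the $\frac{d}{dx}{\rm Re}(w(z_\phi))$ factor. I would integrate the first piece by parts in $t$ (using $\eta(0)=\eta(1)=0$) and the second piece by parts twice in $x$. The boundary terms in $x$ vanish because $\frac{d\eta}{dx}\sigma = \frac{d\eta}{d\rho}$, which tends to $0$ as $\rho \to \pm\infty$ for $\eta\in M_{0,0}$ (and the smooth extension established in Proposition \ref{smoothextend} justifies these manipulations). After collecting terms one finds a beautiful cancellation:
$$-\dot\phi\,\frac{d}{dx}\!\Big(H\,\frac{d\dot\phi}{dx}\Big) + \frac{d}{dx}\!\Big(\dot\phi\,\frac{d\dot\phi}{dx}\,H\Big) = \Big(\frac{d\dot\phi}{dx}\Big)^{\!2} H,$$
so that
$$\delta E = -\int_0^1\!\int_a^b 2\eta\!\left[\ddot\phi\,\frac{d}{dx}{\rm Re}(w(z_\phi)) + \Big(\frac{d\dot\phi}{dx}\Big)^{\!2}\sigma\,{\rm Im}(w'(z_\phi))\right]dx\,dt.$$
Demanding that this vanish for every admissible variation $\eta$ yields the claimed geodesic equation pointwise.

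The main technical point I expect to need care with is verifying that $\phi + s\eta$ stays inside $\mathcal M_\vartheta$ for small $s$ (so that the variation is admissible) and that the integration by parts in $x$ produces no boundary contribution. The first issue is handled because $\mathcal M_\vartheta$ is defined by the open condition $\frac{d}{dx}{\rm Re}(w(z_\phi))>0$, hence is open in $M_{0,0}$. For the boundary terms, the combinations appearing after integration by parts are $\dot\phi^{\,2}\,H\,\frac{d\eta}{dx}$ and $\dot\phi\,\frac{d\dot\phi}{dx}\,H\,\eta$ evaluated at $x=a,b$; since $\dot\phi$ and $\frac{d\dot\phi}{dx}\sigma$ (and similarly for $\eta$) extend smoothly to the boundary with the derivative factors vanishing there by Proposition \ref{smoothextend}, these terms are indeed zero. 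Aside from this bookkeeping the argument is a direct variational computation.
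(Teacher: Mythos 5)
Your proposal is correct, and the final Euler--Lagrange equation you extract matches the paper's, but you take a slightly different (and arguably cleaner) variational route. The paper varies the arc-length functional $L(s)=\int_0^1\sqrt{\int_a^b \dot \phi^2 \frac{d}{dx}{\rm Re}(w (z_\phi))\,dx}\,dt$, assuming at the outset a unit-speed parametrization so that the factor $\|\dot\phi\|$ can be pulled through the $t$-integration; you instead vary the energy functional $E(\phi)=\int_0^1\int_a^b \dot\phi^2\,\frac{d}{dx}{\rm Re}(w(z_\phi))\,dx\,dt$. These are the two standard ways to derive the geodesic equation; the energy functional avoids the square root, does not require the constant-speed normalization as a hypothesis (critical points are automatically affinely parametrized), and the resulting cancellation you exhibit, $-\dot\phi\,\frac{d}{dx}(H\frac{d\dot\phi}{dx}) + \frac{d}{dx}(\dot\phi\,\frac{d\dot\phi}{dx}\,H) = (\frac{d\dot\phi}{dx})^2 H$, is exactly the combination the paper organizes via the $\frac{d}{dt}$ and $\frac{d}{ds}$ cross terms. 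One minor point worth tightening: the boundary terms in $x$ vanish not because $\frac{d\eta}{dx}$ or $\eta$ vanish at $a,b$, but because each boundary term carries a factor of $H=\sigma\,{\rm Im}(w'(z_\phi))$, and the combinations $\sigma\frac{d\eta}{dx}=\frac{d\eta}{d\rho}$ and $\sigma\frac{d\dot\phi}{dx}=\frac{d\dot\phi}{d\rho}$ tend to zero for elements of $M_{0,0}$; the paper's phrasing (``$\frac{d\psi}{dx}$ vanishes at $a$ and $b$'') is similarly loose, so you are consistent with it, but it is the $\sigma$-weighted derivative that actually controls the boundary.
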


\begin{proof}
Let $\phi(t)\in\mathcal M_\vartheta, t\in[0,1]$, be a curve with constant speed $1$. We define a surface $\phi(t,s):=\phi(t)+s\psi(t)$ in $ \mathcal M_\vartheta $ with $\psi(0)=0$, $\psi(1)=0$. For any fixed $s$ the arclength of $\phi(t,s)$ is given by
\be
L(s)=\int_0^1\sqrt{\int_a^b \dot \phi^2 \frac{d}{dx}{\rm Re}\left(w (z_\phi)\right)dx}dt.\nonumber
\ee
Taking the derivative in the $s$ direction at $0$
\bea
\label{geo1}
\frac{d}{ds}\Big|_{s=0}L&=&\int_0^1\frac1{ ||\dot\phi||}\int_a^b  \dot \phi\dot\psi \frac{d}{dx}{\rm Re}\left(w (z_\phi)\right) dxdt \\
&&+\int_0^1\frac1{2 ||\dot\phi||}\int_a^b   \dot \phi^2 \frac{d}{dx}\frac{d}{ds}{\rm Re}\left(w (z_\phi) \right)dxdt.\nonumber
\eea
Now, recall that $z_\phi=x+i(f(x)+\frac{d\phi}{dx}\sigma)$, and so 
$$\frac{d}{ds}{\rm Re}\left(w (z_\phi)\right)=-\frac{d\psi}{dx}\sigma{\rm Im}\left(w' (z_\phi)\right).$$
Thus after integration by parts in the $\frac{d}{dx}$ direction, and using the fact that $\frac{d\psi}{dx}$ vanishes at $a$ and $b$ by the definition of $M_{0,0}$, the second term on the right hand side of \eqref{geo1} becomes 
\be
\label{geo2}
\int_0^1\frac1{  ||\dot\phi||}\int_a^b   \dot \phi\frac{d\dot\phi}{dx} \frac{d\psi}{dx}\sigma{\rm Im}\left(w' (z_\phi)\right)dxdt.
\ee
Returning to the first term from \eqref{geo1}, we integrate by parts to move the $\frac{d}{dt}$ off of $\psi$. Using the fact that $\psi(0)=0$, $\psi(1)=0$, and $\phi(t)$ is constant speed, we see
\bea
\int_0^1\frac1{ ||\dot\phi||}\int_a^b  \dot \phi\dot\psi \frac{d}{dx}{\rm Re}\left(w (z_\phi)\right) dxdt&=&-\int_0^1\frac1{ ||\dot\phi||}\int_a^b  \ddot \phi \psi \frac{d}{dx}{\rm Re}\left(w (z_\phi)\right)  \nonumber\\
&&-\int_0^1\frac1{ ||\dot\phi||}\int_a^b  \dot \phi\ \psi \frac{d}{dx}\frac{d}{dt}{\rm Re}\left(w (z_\phi)\right).\nonumber 
\eea
The second term on the right above can be written as
\bea
&&\int_0^1\frac1{ ||\dot\phi||}\int_a^b  \dot \phi\ \psi \frac{d}{dx}\left(\frac{d\dot\phi}{dx}\sigma{\rm Im}\left(w' (z_\phi)\right)\right)dxdt  \nonumber\\
&&=-\int_0^1\frac1{ ||\dot\phi||}\int_a^b \left( \frac{d\dot \phi}{dx} \psi +\dot\phi\frac{d\psi}{dx} \right)\frac{d\dot\phi}{dx}\sigma{\rm Im}\left(w' (z_\phi)\right) dxdt.\nonumber 
\eea
Note the term above containing $\frac{d\psi}{dx}$ cancels with \eqref{geo2}. Putting everything together we now see that
$$\frac{d}{ds}\Big|_{s=0}L=-\int_0^1\frac1{ ||\dot\phi||}\int_a^b\psi \left( \ddot \phi  \frac{d}{dx}{\rm Re}(w (z_\phi))+\left(\frac{d\dot\phi}{dx}\right)^2\sigma  {\rm Im} (w' (z_\phi) ) \right) dxdt,$$
which completes the proof of the proposition. 
\end{proof}

\section{The Kempf-Ness functional and stability}
\label{KNsection}

In the previous section we introduced the space of functions $\mathcal M_\vartheta$, defined a metric on it, and wrote down the geodesic equation. Here we introduce the functional $\mathcal J:\mathcal M_\vartheta\rightarrow\mathbb R$, and demonstrate it is convex along geodesics. Because of this property, motivated by infinite dimensional geometric invariant theory, we refer to this functional as the $\mathcal J$-functional or the {\it Kempf-Ness functional}. The main goal of this section will be to relate properness of this functional to our notions of stability. 

\begin{definition}
The $\mathcal J$-functional is defined by its derivative along a path $\phi(t)\in\mathcal M_\vartheta$ as follows
\be
\frac{d}{dt}\mathcal J(\dot\phi)=-\int_a^b\dot\phi \frac{d}{dx}{\rm Im}(w(z_{\phi(t)}))=0.\nonumber
\ee
\end{definition}

In particular $d\mathcal J(\phi)=0$ at a point $\phi\in\mathcal M_\vartheta$ if and only if $f_\phi$ lies on a level curve of $\mathcal C_m$. Note this is a slightly different formulation from what was defined in the introduction, but we shall see the two are equivalent via integration by parts.

Previously we assumed that $z_2\in A_k$, and that ${\rm Re}(w'(z))>0$ in $A_k$, an assumption we   carry over to this section. This   implies critical points of  the  $\mathcal J$-functional  are absolute minimums in the stable case. As stated earlier, this  is a general assumption  in that we can always multiply our polynomial $w(z)$ by $-1$. Of course, if ${\rm Re}(w'(z))<0$, one could instead consider the space of functions  where $\frac{d}{dx}{\rm Re}(w(z_\phi))<0$, and then maximize $\mathcal J$ over this space.

\begin{proposition}
\label{convexgeodesics}
The $\mathcal J$-functional is convex along geodesics.
\end{proposition}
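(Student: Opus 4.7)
The plan is to compute $\frac{d^2}{dt^2}\mathcal{J}$ along a path $\phi(t)\subset \mathcal{M}_\vartheta$ and substitute the geodesic equation to read off positivity. First I would rewrite the defining first derivative using integration by parts: since $z_\phi(a)=z_1$ and $z_\phi(b)=z_2$ both lie on $\mathcal{C}_0$, the boundary term $\bigl[\dot\phi\,{\rm Im}(w(z_\phi))\bigr]_a^b$ vanishes, giving
$$\frac{d}{dt}\mathcal{J} = \int_a^b \frac{d\dot\phi}{dx}\,{\rm Im}(w(z_\phi))\,dx.$$
This form is convenient for the second differentiation because $\tfrac{d}{dt}z_\phi = i\sigma\,\tfrac{d\dot\phi}{dx}$ immediately gives $\tfrac{d}{dt}{\rm Im}(w(z_\phi)) = \sigma\,\tfrac{d\dot\phi}{dx}\,{\rm Re}(w'(z_\phi))$.

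Next I would differentiate once more in $t$ and then integrate by parts on the term containing $\tfrac{d\ddot\phi}{dx}$; the boundary contribution $\bigl[\ddot\phi\,{\rm Im}(w(z_\phi))\bigr]_a^b$ again vanishes by the same level set property, while the smooth extension of Proposition \ref{smoothextend} keeps $\ddot\phi$ bounded at the endpoints. This produces
$$\frac{d^2}{dt^2}\mathcal{J} = -\int_a^b \ddot\phi\,\frac{d}{dx}{\rm Im}(w(z_\phi))\,dx + \int_a^b \sigma\!\left(\frac{d\dot\phi}{dx}\right)^{\!2}{\rm Re}(w'(z_\phi))\,dx.$$
Along a geodesic, the preceding proposition solves for $\ddot\phi = -\sigma(d\dot\phi/dx)^{2}\,{\rm Im}(w'(z_\phi))\big/\tfrac{d}{dx}{\rm Re}(w(z_\phi))$, where dividing by $\tfrac{d}{dx}{\rm Re}(w(z_\phi))$ is legitimate because this quantity is strictly positive by the defining condition of $\mathcal{M}_\vartheta$.

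Substituting, the integrand acquires the factor $\,{\rm Im}(w')\tfrac{d}{dx}{\rm Im}(w) + {\rm Re}(w')\tfrac{d}{dx}{\rm Re}(w)\,$ divided by $\tfrac{d}{dx}{\rm Re}(w)$. Writing $\alpha = df_\phi/dx$ and using the Cauchy--Riemann identities $\tfrac{d}{dx}{\rm Re}(w(z_\phi)) = {\rm Re}(w') - \alpha\,{\rm Im}(w')$ and $\tfrac{d}{dx}{\rm Im}(w(z_\phi)) = {\rm Im}(w') + \alpha\,{\rm Re}(w')$, the cross terms cancel and the numerator collapses to ${\rm Re}(w')^{2}+{\rm Im}(w')^{2} = |w'(z_\phi)|^{2}$. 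Hence
$$\frac{d^2}{dt^2}\mathcal{J} = \int_a^b \frac{\sigma\,(d\dot\phi/dx)^{2}\,|w'(z_\phi)|^{2}}{\tfrac{d}{dx}{\rm Re}(w(z_\phi))}\,dx \ge 0,$$
proving convexity. The only real point of care is the justification of the two boundary vanishings in the integrations by parts; these are routine given Proposition \ref{smoothextend} and the fact that the graph of $f_\phi$ meets $\mathcal{C}_0$ at both endpoints, so I do not expect a serious obstacle here — the work is really in the algebraic simplification that exhibits $|w'(z_\phi)|^{2}$.
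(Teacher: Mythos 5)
Your argument is correct and takes essentially the same route as the paper: compute $\tfrac{d^2}{dt^2}\mathcal{J}$, integrate by parts, substitute the geodesic equation, factor out the nonnegative prefactor, and check the sign of what remains. Two small points of difference are worth noting. First, your observation that the cross terms cancel and the integrand collapses to $|w'(z_\phi)|^2$ is a cleaner endgame than the paper's, which stops at the sum ${\rm Im}(w')^2 + {\rm Re}(w')^2$ together with two opposite-signed cross terms and merely asserts nonnegativity without remarking that they annihilate. Second, your handling of the boundary terms is slightly tidier: by integrating by parts in the form that produces $\bigl[\dot\phi\,{\rm Im}(w(z_\phi))\bigr]_a^b$ and $\bigl[\ddot\phi\,{\rm Im}(w(z_\phi))\bigr]_a^b$, both vanish because $z_\phi(a)=z_1$, $z_\phi(b)=z_2$ lie on $\mathcal{C}_0$, with $\ddot\phi$ bounded there by Proposition \ref{smoothextend}; the paper instead integrates by parts in the other order and invokes the vanishing of $\tfrac{d\dot\phi}{dx}$ at the endpoints, which is imprecise (what actually vanishes is $\tfrac{d\dot\phi}{dx}\sigma=\dot f_\phi$, because $\sigma$ degenerates at $a,b$). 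Neither difference affects the validity of either argument.
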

\begin{proof}
We compute directly
\bea
\frac {d^2}{dt^2}{\mathcal J}(\dot \phi)&=&-\int_a^b\left(\ddot \phi\, \frac{d}{dx}{\rm Im}(w(z_{\phi(t)}))+ \dot \phi\frac{d}{dx}\frac{d}{dt}  {\rm Im}(w(z_{\phi(t)}))\right)dx\nonumber\\
&=& \int_a^b\left(-\ddot \phi\, \frac{d}{dx}{\rm Im}(w(z_{\phi(t)}))- \dot \phi\frac{d}{dx} \left(\frac{d\dot\phi}{dx}\sigma{\rm Re}(w'(z_{\phi(t)}))\right)\right)dx.\nonumber
\eea
Integrating by parts on the second term, and using that $\frac{d\dot\phi}{dx}$ vanishes on the boundary, we see
\bea
\frac {d^2}{dt^2}{\mathcal J}(\dot \phi)&=& \int_a^b\left(-\ddot \phi\, \frac{d}{dx}{\rm Im}(w(z_{\phi(t)}))+ \left(\frac{d\dot\phi}{dx}\right)^2\sigma{\rm Re}(w'(z_{\phi(t)}))\right)dx.\nonumber
\eea
Plugging in the geodesic equation for $\ddot \phi$ yields 
\bea
\frac {d^2}{dt^2}{\mathcal J}(\dot \phi)&=& \int_a^b\left(\frac{\left(\frac{d\dot\phi}{dx}\right)^2\sigma  {\rm Im} (w' (z_\phi) )}{\frac{d}{dx}{\rm Re}(w (z_\phi))}\right) \frac{d}{dx}{\rm Im}(w(z_{\phi(t)}))dx\nonumber\\
&&+\int_a^b\left(\frac{d\dot\phi}{dx}\right)^2\sigma{\rm Re}(w'(z_{\phi(t)}))dx.\nonumber
\eea
In each of the terms on the right hand side above, we can factor out $\left(\frac{d\dot\phi}{dx}\right)^2\frac{\sigma  }{\frac{d}{dx}{\rm Re}(w (z_\phi))},$ which remains nonnegative as long as we are in    the space $\mathcal M_\vartheta$. Factoring out this expression we are left with 
\be
\label{geo3}
 {\rm Im} (w' (z_\phi) ) \frac{d}{dx}{\rm Im}(w(z_{\phi(t)}))+{\rm Re}(w'(z_{\phi(t)}))\frac{d}{dx}{\rm Re}(w (z_\phi)),
\ee
which we now check is nonnegative as well. Specifically
\bea
 \frac{d}{dx}{\rm Im}(w(z_{\phi(t)}))&=& {\rm Im}\left(w'(z_{\phi(t)})\left(1+i\frac{d}{dx}\left(\frac{d\phi}{dx}\sigma\right)\right)\right)\nonumber\\
 &=& {\rm Im}(w'(z_{\phi(t)}))+\frac{d}{dx}\left(\frac{d\phi}{dx}\sigma\right) {\rm Re} (w'(z_{\phi(t)}) ).\nonumber
 \eea
 Similarly
 $$ \frac{d}{dx}{\rm Re}(w(z_{\phi(t)}))=   {\rm Re}(w'(z_{\phi(t)}))-\frac{d}{dx}\left(\frac{d\phi}{dx}\sigma\right) {\rm Im} (w'(z_{\phi(t)}) ) .$$
 Thus, equation \eqref{geo3} can now be written as
  \bea
  &&{\rm Im} (w' (z_\phi) )^2 + {\rm Im} (w' (z_\phi) ) \frac{d}{dx}\left(\frac{d\phi}{dx}\sigma\right) {\rm Re} (w'(z_{\phi(t)}) )\nonumber\\
  &&+ {\rm Re}(w'(z_{\phi(t)}))^2- {\rm Re}(w'(z_{\phi(t)}))\frac{d}{dx}\left(\frac{d\phi}{dx}\sigma\right) {\rm Im} (w'(z_{\phi(t)}) )\geq0,\nonumber
 \eea
proving the $\mathcal J$-functional is convex along geodesics.

\end{proof}

In fact, the above argument works to show that the  $\mathcal J$-functional is convex along subgeodesics, i.e. solutions  to  $$\ddot \phi  \frac{d}{dx}{\rm Re}(w (z_\phi))+\left(\frac{d\dot\phi}{dx}\right)^2\sigma  {\rm Im} (w' (z_\phi) )\leq0.$$
We now relate the Kempf-Ness functional to our notions of stability.

\begin{proof}[Proof of Theorem \ref{functionalstability}]

To begin, first assume we are in the semistable case. By Proposition \ref{stabilityanalytic} we have $z_2\in A_k$ and  $z_1\in \overline{A_k}$, with ${\rm Re}(w'(z))>0$ in $A_k$.  Let $f$ be the $C^0[a,b]$ function lying on $\mathcal C_0$ connecting $z_1$ to $z_2$. Our  first goal is to show that in this case, any function in $\mathcal M_\vartheta$ can be deformed to $f$ in a way that decreases the $\mathcal J$-functional.

Fix a function $f_{\phi_0}\in \mathcal M_\vartheta$. For each $x\in[a,b]$, let $f_{\phi_t}(x)$ be the solution to the ODE
\be
\label{flow}
\dot f_{\phi_t}=-{\rm Im}(w(z_{\phi_t}))
\ee
with initial value $f_{\phi_0}(x)$. This defines a family of functions $f_{\phi_t}$ on $[a,b]$, which we think of as a zeroth-order flow deforming $f_{\phi_0}$ towards the curve $f$ where ${\rm Im}(w)=0.$
Note that ${\rm Im}(w(z_1))={\rm Im}(w(z_2))=0$, so $\dot f_{\phi_t}=0$ at the boundary points and thus the boundary condition is preserved.

From Lemma \ref{crossinglemma1} and Lemma \ref{crossinglemma2}, we have some control on  our initial function $f_{\phi_0}$. In particular, we know it is contained  in  $A_{k-1}\cup A_k\cup A_{k+1}$. Furthermore,  $f_{\phi_0}$ does not intersect $\mathcal C_0$ in either $A_{k-1}$ or $A_k$. The assumption that ${\rm Re}(w'(z))>0$ in $A_k$ implies $v_y>0$ in $A_k$, and since  $v=0$ along  the graph of $f$, we  see that $v(x,y)>0$ if $z$ lies above the graph of $f$ but below the next curve in $\mathcal C_0$. Similarly for points $z$ below the graph of $f$ but above the next curve in $\mathcal C_0$, we have $v(x,y)<0$.   Thus  for a given $x\in[a,b]$  one of 3 things can happen. If $f_{\phi_0}(x)>f(x)$, then  ${\rm Im}(w(x+ if_{\phi_0}(x)))>0,$ and so the flow \eqref{flow} decreases $f_{\phi_0}$  down towards $f$. If  $f_{\phi_0}(x )=f(x )$, we have ${\rm Im}(w(x+ if_{\phi_0}(x)))=0$, and so $f_{\phi_0}(x)$ is fixed. Finally, if $f_{\phi_0}(x)<f(x)$, then  ${\rm Im}(w(x+ if_{\phi_0}(x)))<0,$ and  the flow \eqref{flow} increases  $f_{\phi_0}$ up towards $f$.

\begin{lemma}
The family $f_{\phi_t}$   converges to $f$ in the $C^0$ norm as $t\rightarrow\infty$.
\end{lemma}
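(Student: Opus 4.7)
The plan is to analyze the flow pointwise in $x\in[a,b]$ and then upgrade pointwise convergence to uniform convergence via Dini's theorem.

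First, I would fix $x\in[a,b]$ and view \eqref{flow} as a scalar autonomous ODE $\dot y=-v(x,y)$ in the single variable $y:=f_{\phi_t}(x)$, where $v=\mathrm{Im}(w)$. Since $v(x,f(x))=0$ by definition of $f$, the constant $y=f(x)$ is a fixed point. The sign analysis already outlined in the excerpt says that on the vertical slice $\{x\}\times\mathbb R$, we have $v>0$ strictly above the graph of $f$ and strictly below the next component of $\mathcal C_0$ (this uses $v_y=\mathrm{Re}(w')>0$ at $(x,f(x))\in A_k$ to get the sign just above $f(x)$, and the fact that once $v$ becomes positive in $A_k$ it cannot return to $0$ until the next zero of $v$, namely the next intersection with $\mathcal C_0$); the mirror statement holds below $f$. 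By Lemma \ref{crossinglemma2}, the initial value $f_{\phi_0}(x)$ already lies in this ``safe interval'' between the two adjacent components of $\mathcal C_0$.

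Using these signs, the scalar ODE is a gradient-like flow towards $f(x)$: if $f_{\phi_0}(x)>f(x)$ then $\dot y<0$ throughout the upper safe zone, and symmetrically for $f_{\phi_0}(x)<f(x)$. Consequently $|f_{\phi_t}(x)-f(x)|$ is monotonically non-increasing in $t$, and $f_{\phi_t}(x)$ is trapped between $f_{\phi_0}(x)$ and $f(x)$ for all $t\ge 0$, hence remains in the safe interval. A standard monotone-bounded argument then gives $f_{\phi_t}(x)\to f(x)$: the monotone limit $y_\infty$ lies in the safe interval, and if $y_\infty\neq f(x)$ then $v(x,y_\infty)\neq 0$, so the flow would continue to move $y$ strictly, a contradiction. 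At the endpoints $x=a,b$ we have $\dot f_{\phi_t}(a)=-\mathrm{Im}(w(z_1))=0$ and similarly at $b$, so $f_{\phi_t}(a)=p=f(a)$ and $f_{\phi_t}(b)=q=f(b)$ for all $t$, so the boundary data is preserved trivially.

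To promote pointwise to uniform convergence I would invoke Dini's theorem applied to
\[
g_t(x):=|f_{\phi_t}(x)-f(x)|, \qquad x\in[a,b].
\]
Each $g_t$ is continuous in $x$ (solutions of the ODE depend continuously on the parameter $x$, since $v$ is smooth and the initial datum $f_{\phi_0}$ is smooth by Proposition \ref{smoothextend}), the limit function $0$ is continuous, and $g_t(x)$ is monotone decreasing in $t$ for each fixed $x$ by the previous step. Dini's theorem on the compact interval $[a,b]$ then gives uniform convergence $g_t\to 0$, which is exactly convergence $f_{\phi_t}\to f$ in the $C^0$ norm.

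The main obstacle I anticipate is controlling the flow uniformly near an endpoint where $z_1$ sits on $\mathcal D_0$ or at a non-generic critical point: there $f$ develops vertical slope and the ``safe interval'' on the vertical slice degenerates as $x\downarrow a$. However, since $f_{\phi_t}(a)$ is pinned at $p$ for all $t$ and both $f_{\phi_t}$ and $f$ are continuous in $x$ up to the endpoint, the Dini argument does not actually need the safe-interval width to be bounded below — continuity plus monotonicity plus pointwise convergence on a compact set suffices. This is why the plan ultimately reduces to verifying just two ingredients: (i) pointwise monotone convergence from the scalar ODE, and (ii) joint continuity of $f_{\phi_t}$ in $x$, both of which follow from the framework already assembled in Section \ref{functionspace}.
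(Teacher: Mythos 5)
Your proof is correct, but it takes a genuinely different route from the paper. The paper's argument is a compactness argument in the $z$-plane: fix $\epsilon>0$, let $R$ be the region trapped between the graphs of $f_{\phi_0}$ and $f$, remove an $\epsilon$-tube around the graph of $f$ to get a compact set $R_\epsilon$ on which $|{\rm Im}(w)|$ has a positive minimum $\eta_0$, then observe the flow speed is at least $\eta_0$ on $R_\epsilon$, so every point exits $R_\epsilon$ within a finite uniform time $T$, hence $f_{\phi_T}$ lies in the $\epsilon$-tube. Your argument instead establishes pointwise monotone convergence from the scalar autonomous ODE on each vertical slice (the monotonicity and trapping between $f_{\phi_0}(x)$ and $f(x)$ are exactly the sign facts recorded in the paragraph preceding the lemma), and then upgrades to uniform convergence by Dini's theorem on $[a,b]$. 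Both are correct; what the paper's version buys is an explicit, quantitative exit time, which sets up the subsequent paragraph on exponential convergence rates, whereas your version is more modular and avoids talking about Euclidean tubes around a graph of possibly unbounded slope (a minor technical point the paper glosses over near a semistable endpoint). Your observation that the endpoints are pinned and that Dini needs no lower bound on the slice widths is the right way to handle the degenerate boundary behavior; just be sure to state the continuous-parameter version of Dini, i.e.\ that the open sets $U_t=\{x: g_t(x)<\epsilon\}$ are nested increasing and cover $[a,b]$, so compactness gives a single $T$ with $U_T=[a,b]$.
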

\begin{proof}
Fix $\epsilon>0$. Let $R$ denote the region bounded by the graph of $f_{\phi_0}$ and the graph of $f$, and let $R_\epsilon:=R\backslash B_\epsilon(f)$, where $B_\epsilon(f)$ is an epsilon tube around the graph of $f$ (in the Euclidean topology on $\mathbb C$). Because $f_{\phi_0}$ does not intersect $\mathcal C_0$ in $R_\epsilon$, we see $|{\rm Im}(w(z))|>0$ on $R_\epsilon$. Define $\eta_0:=\min_{R_\epsilon} |{\rm Im}(w(z))|>0$. Then, inside of $R_\epsilon$, when $f_{\phi_t}>f$ we have
 $\dot f_{\phi_t}=-{\rm Im}(w(z_{\phi_t}))<-\eta_0.  $
Similarly if $f_{\phi_t}<f, $ inside of $R_\epsilon$  we have $\dot f_{\phi_t}=-{\rm Im}(w(z_{\phi_t}))>\eta_0.  $ In either case, this bound shows that after a finite time $T$ the graph $f_{\phi_T}$ will lie inside $B_\epsilon(f)$, proving $C^0$ convergence.
\end{proof}

As for the rate of convergence, for a fixed $x\in(a,b)$, $f_{\phi_t}(x)$ converges exponentially fast to $f(x)$. To see this, note that since $x>a$ we have $f(x)\in A_{k}$, and thus ${\rm Re}(w'(x+ if (x))>0$. Now,  find an $\epsilon_x>0$ so that $B_{\epsilon_x}(x+if(x))\subset A_k$, and let $\delta_x>0$ be the infimum of ${\rm Re}(w'(z))$ over this ball. By the above convergence, after a finite time $T$, we have $x+if_{\phi_t}(x)\in B_{\epsilon_x}(x+if(x))$ for $t>T$. Then, in the case that $f_{\phi_t}>f$
$$\ddot f_{\phi_t}=-\frac{d}{dt}{\rm Im}(w(z_{\phi_t}))=-\dot f_{\phi_t}{\rm Re}(w(z_{\phi_t}))<-\delta_x\dot f_{\phi_t}.$$
If  $f_{\phi_t}<f$ we just switch the inequality in the above line.  In either case $|\dot f_{\phi_t}|$ is exponentially decaying in time  for $t>T$. Note that if $z_1\in\partial A_k$, then we may have ${\rm Re}(w'(z_1))=0$, in which case the above decay rate $\delta_x$ will degenerate as $x$ approaches $a$. On the other hand if ${\rm Re}(w'(z_1))>0,$ then the rate of exponential convergence will be uniform in $x$. 

Our next step is to show that  the $\mathcal J$-functional is nondecreasing along the flow \eqref{flow}.  Note that we do not claim that $f_{\phi_t}\in \mathcal M_\vartheta$ for $t>0$. Furthermore, if $\vartheta$ is strictly semistable than $f$ is explicitly not in $\mathcal M_\vartheta$, since it has infinite slope at $a$. However, the $\mathcal J$-functional is still defined, which is suitable for our purposes.   Recall that $f_{\phi_t}=f+\frac{d\phi_t}{dx}\sigma$, and so $\dot f_{\phi_t}=\frac{d\dot\phi_t}{dx}\sigma$. Integrating by parts gives:
\bea
\frac{d}{dt}\mathcal J(\dot\phi_t)&=&-\int_a^b\dot\phi_t \frac{d}{dx}{\rm Im}(w(z_{\phi_t}))=\int_a^b\frac{d}{dx}\dot\phi_t  {\rm Im}(w(z_{\phi_t}))\nonumber\\
\label{decreasing}&=&\int_a^b\frac{ \dot f_{\phi_t}}{\sigma}  {\rm Im}(w(z_{\phi_t}))=-\int_a^b\frac{ {\rm Im}(w(z_{\phi_t}))^2  }{\sigma}\leq 0. 
\eea
Note that the last inequality is strict unless ${\rm Im}(w(z_{\phi_t}))=0$, that is is unless $f_{\phi_t}$ lies on $\mathcal C_0$. By Proposition \ref{onecurveregion} there is exactly one curve in $\mathcal C_0$   connecting  $z_1$ to $z_2$. Thus, among all continuous functions with boundary values $z_1$ and $z_2$,  the  $\mathcal J$-functional has no critical points other then $f$.

We now have that $f_{\phi_t}\rightarrow f$ in $C^0$, and that the $\mathcal J$-functional is nondecreasing in time. Our next step is to argue that $\mathcal J(f_{\phi_t})$ converges to $\mathcal J(f)$. Looking at \eqref{decreasing} we see the derivative of the $\mathcal J$-functional  only depends on zeroth order data for $f_{\phi_t}$, so $C^0$ convergence of this family should be enough. Because $\sigma=0$ on the boundary points $a$ and $b$, we only need to check that the quotient $\frac{ {\rm Im}(w(z_{\phi_t}))  }{\sigma}$ stays a continuous function for all time. We focus our attention at the point $a$, as the argument at $b$ is similar. 

At the initial time, we have assumed $f_{\phi_0}\in\mathcal M_\vartheta$. As we saw in the proof of Propsosition \ref{semistable},   since level sets of $u$ are always perpendicular to level sets of $v$, we know that $\nabla u$ points in the direction of $\Gamma_k'$ or $-\Gamma_k'$.  Lemma \ref{lemmaM} and \eqref{inH} imply that $\nabla u$ points in the direction of $\Gamma_k'$. Because $f_{\phi_0}\in\mathcal M_\vartheta$, we know $\nabla u\cdot (\frac{\partial}{\partial x}+\frac{df_{\phi_0}}{dx}(a)\frac{\partial}{\partial y})>0.$  In particular this implies that at $z_1$, the graph of $f_{\phi_0}$ is not perpendicular to the level set $\Gamma_k$, and is instead bounded away from the perpendicular by some ray with angle $\theta_0>0$ off the perpendicular.  Thus ${\rm Im}(w(z_{\phi_0}))$ must go to zero at least linearly as $x$ goes to $a$, which gives that ${\rm Im}(w(z_{\phi_0}))/(x-a)$ is bounded.

It is helpful to return to the power series \eqref{powerseriesx}. This series is defined with respect to the variable $r=e^\rho$, and recall that $x\rightarrow a$ is equivalent to $r\rightarrow0$. Furthermore, recall that $\sigma=\frac{dx}{d\rho}=\frac{dx}{dr}\frac{dr}{d\rho}.$ Then $\frac{dr}{d\rho}=r$, which gives  $\lim_{r\rightarrow 0}\frac\sigma r=b_1,$
where $b_1$ arises from \eqref{powerseriesx}. Thus 
$$\lim_{r\rightarrow 0}\frac{x-a}\sigma=\lim_{r\rightarrow 0}\frac{(x-a)/r}{\sigma/r}=1.$$
From here we conclude
$$\lim_{x\rightarrow a}\frac{{\rm Im}(w(z_{\phi_0}))}{\sigma}=\lim_{x\rightarrow a}\frac{x-a}{\sigma}\,\frac{{\rm Im}(w(z_{\phi_0}))}{x-a}<C.$$
Now, because $f_{\phi_t}$ is bounded between $f_{\phi_0}$ and $f$ (which lies on $\Gamma_k$), it follows that for all $t$ the graph of $f_{\phi_t}$ must approach $z_1$ bounded by the same ray with angle $\theta_0>0$ off the perpendicular of $\Gamma_k'(z_1)$. In particular $\frac{ {\rm Im}(w(z_{\phi_t}))  }{\sigma}$ is bounded for all $t$. Thus the integrand of $\frac{d}{dt}\mathcal J(\dot\phi_t)$ stays bounded and continuous, and   since $f_{\phi_t}\rightarrow f$ in $C^0$, we conclude $\mathcal J(f_{\phi_t})\rightarrow\mathcal J(f).$

We have now show that, if $\vartheta$ is semistable, then for any $f_{\phi_0}\in \mathcal M_\vartheta$  
$$\mathcal J(f_{\phi_0})\geq \mathcal J(f)>-\infty,$$
and so the  $\mathcal J$-functional is bounded from below. If in addition $\vartheta$ is stable, then $f\in\mathcal M_\vartheta$ and this global minimum is attained. 

In fact, we can also see $f$ is a local minimum of ${\mathcal J}$   by taking the second variation. Let $f(t)=f+\frac{d\psi(t)}{dx}\sigma$, with $\psi(t)\in M_{0,0}$ and $\psi(0)=0$, be a variation of $f$. In the proof of Propostion \ref{convexgeodesics} we computed 
\bea
\frac {d^2}{dt^2}\Big|_{t=0}{\mathcal J}(\dot \psi)&=& \int_a^b\left(-\ddot \psi\, \frac{d}{dx}{\rm Im}(w(z_{\psi(0)}))+ \left(\frac{d\dot\psi}{dx}\right)^2\sigma{\rm Re}(w'(z_{\psi(0)}))\right)dx.\nonumber
\eea
Since $f_{\psi(0)}$ lies on $\mathcal C_0$, then $ \frac{d}{dx}{\rm Im}(w(z_{\psi(0)}))=0$, and the first term on the right hand side vanishes. Furthermore, we have assumed  ${\rm Re}(w'(z))>0$ in $A_k$. Thus if the variation $\psi(t)$ is non trivial, then 
\bea
\frac {d^2}{dt^2}\Big|_{t=0}{\mathcal J}(\dot \psi)>0.\nonumber
\eea

On the other hand, if  $\vartheta$ is strictly semistable, then   $f$ is not in $\mathcal M_\vartheta$. So while the  $\mathcal J$-functional is bounded from below, any function $f_{\phi_0}\in \mathcal M_\vartheta$ can always be deformed in a way that decreases $\mathcal J(\cdot)$, and a limit in $\mathcal M_\vartheta$ will never be attained. Thus the $\mathcal J$-functional is not proper in this case.

We now turn to the unstable case. If $\mathcal M_\vartheta$ is empty, by Lemma \ref{lemmaM} and Proposition \ref{semistable} we know $\vartheta$ is unstable. Thus to complete the proof of the theorem  we need to show  that if  $\mathcal M_\vartheta$ is non-empty in this case, then the $\mathcal J$-functional fails to be bounded from below. Choose a function $f_\phi\in\mathcal M_\vartheta$. 

By Proposition \ref{unstable}, $\mathcal M_\vartheta$ can only be non-empty if $z_1$ is either in $A_{k-1}$ or $A_{k+1}$. Assume without loss of generality that $z_1\in A_{k-1}$ (otherwise we can just preform a mirror argument). Note that ${\rm Re}(w'(z))<0$ $(u_x<0)$ in $A_{k-1}$. Parametrize $\Gamma_{k-1}$ so $\Gamma_{k-1}(0)=z_1$ and $|\Gamma_{k-1}(t)|\rightarrow\infty$ and $t\rightarrow\infty$. As we have seen $\nabla u$ points in the same direction as either $\Gamma'_{k-1}(t)$ or $-\Gamma'_{k-1}(t)$, and because $u_x<0$ in $A_{k-1}$ we see $\nabla u(z_1)$ points in the same direction as  $-\Gamma'_{k-1}(0)$.

Here we recall the defining property of $\mathcal M_\vartheta$:
\be
\label{inH2}
\frac{d}{dx}{\rm Re} (w (z_\phi ) )= {\rm Re}  (w' (z_\phi  )  )-\frac{df_\phi}{dx}{\rm Im}(w'(z_\phi  ))>0. 
\ee
When $f_\phi$ crosses $\gamma_{k-1}$  we have $ {\rm Re}  (w' (z_\phi  )  )=0$. Furthermore, $\gamma_{k-1}$ lies between $\ti \gamma_{k-1}$ and $\ti \gamma_{k}$, and in this region  ${\rm Im}(w'(z) )<0$  (as we saw in the proof of Proposition \ref{unstable}). Thus, as  $f_\phi$ crosses $\gamma_{k-1}$ we must have  $\frac{df_\phi}{dx}>0$. But   this implies $\frac{df_\phi}{dx}(a)>0$, since otherwise there would be a point in $A_{k-1}$ where $\frac{df_\phi}{dx}$ vanishes, which is impossible since  ${\rm Re}  (w' (z  )  )<0$ in $A_{k-1}$. Denote $\frac{df_\phi}{dx}(a)$ by $m_0.$ Since $\nabla u(z_1)$ points in the same direction as  $-\Gamma'_{k-1}(0)$, and $m_0>0$, we know $u_x(z_0)<0$, while $u_y(z_0)>0$. Since $f_\phi\in\mathcal M_\vartheta$ we know $\nabla u\cdot (\frac{\partial}{\partial x}+\frac{df_{\phi}}{dx}(a))>0$, or $m_0>\frac{u_x}{u_y}(z_1)$. Now,  choose a real number $m$ such that $m_0>m>\frac{u_x}{u_y}(z_1)$. By continuity of $u$, there exists  a $\delta>0$ so  that $(\frac{\partial}{\partial x}+m\frac{\partial}{\partial y})\cdot \nabla u(x,y)>0$ for every $z$ in  $B_\delta(z_1)$.

We may now construct a new continuous function $\hat f_0:[a,b]\rightarrow \mathbb R$ which is  $f_\phi$ outside of $B_\delta(z_1)$, and a linear function with slope $m$ inside. Let $f_0$ denote the smooth function given by smoothing out the corner on $\partial B_\delta(z_1)$.  Then $f_0$ satisfies condition \eqref{inH2}, but it fails to satisfy the boundary condition $f_0(a)=p$. Since the slope $m$ is smaller than $\frac{df_\phi}{dx}(a)$, we have $f_0(a)=p+\epsilon_0$ for some small $\epsilon_0>0$.

We are now ready to construct a family a functions in $\mathcal M_\vartheta$ for which the $\mathcal J$-functional approaches $-\infty$. For $t>t_0$ large enough, consider the family of linear functions 
$$L_t(x)=t(x-a)+p,$$
which intersects $f_0$ in $B_\delta(z_1)$. Furthermore, since $f_0$ is a linear function with positive slope in $B_\delta(z_1)$, the intersection of $L_t$ and $f_0$ occurs above the line $y=p+\epsilon_0$. Let $f_{\phi_t}$ be the family of functions that follows $L_t$  and then smoothly attaches to $f_0$, following $f_0$ to $z_2$. We can assume the smoothing takes place above the line $y=p+\epsilon_0$. By construction the family $f_{\phi_t}$ is in $\mathcal M_\vartheta$ for all $t>t_0$.

Next we compute the derivative of the $\mathcal J$-functional along this family. First, we note that $L_t$ lies above $\mathcal C_0$ in $B_\delta(z_1)$. Since $v_y<0$ in $A_{k-1}$ and $v=0$ along $\mathcal C_0$, it follows that $v(z_{\phi_t})={\rm Im}(w(z_{\phi_t}))<0$ in $B_\delta(z_1)$. Also since slope of the lines  $L_t$  increases with $t$, $\dot f_{\phi_t}\geq0$. We now have
\bea
\frac{d}{dt}\mathcal J(\dot\phi_t)=\int_a^b\frac{ \dot f_{\phi_t}}{\sigma}  {\rm Im}(w(z_{\phi_t}))&<&\int_a^{a+\epsilon_0/t}\frac{ \dot L_t}{\sigma}  {\rm Im}(w(z_{\phi_t}))\nonumber\\
&=&\int_a^{a+\epsilon_0/t}\frac{ x-a}{\sigma}  {\rm Im}(w(z_{\phi_t}))\nonumber\\
&<&\frac12\int_a^{a+\epsilon_0/t} {\rm Im}(w(z_{\phi_t})),\nonumber
\eea
where in the last line we used that $\lim_{r\rightarrow 0}\frac{x-a}\sigma=1$. Again $v=0$ along $\mathcal C_0$, and so $v(a+i(p+\epsilon_0))<0$. Making $t_0$ larger if necessary, by continuity of $v$ we can assume that along the portion of  $L_t$ that lies above $y=p+\epsilon_0/2$ we have $ {\rm Im}(w(z_{\phi_t}))<-\eta_0$ for some small constant $\eta_0>0$. Thus
$$\frac{d}{dt}\mathcal J(\dot\phi_t)<-\frac12\int_a^{a+\epsilon_0/t} \eta_0=-\frac12\frac{\eta_0\epsilon_0}{t}.$$
In particular
$$\lim_{t\rightarrow\infty}\mathcal J(\dot\phi_t)=\int_{t_0}^{\infty}\frac{d}{dt}\mathcal J(\dot\phi_t)dt<-\frac12 \int_{t_0}^\infty \frac{\eta_0\epsilon_0}{t}dt=-\infty,$$
and so the $\mathcal J$-functional is not bounded from below.

\end{proof}

\section{The deformed Hermitian-Yang-Mills equation on manifolds with Calabi Symmetry}
\label{DHYMsection}

We now turn to our main geometric application, constructing solutions to the deformed Hermitian-Yang-Mills equation on manifolds with Calabi Symmetry. We begin with the background geometry, and much of this discussion can also be found in \cite{Ca, D, FL, Song, SY}. We include it here for the reader's convenience. 

Let $E\rightarrow\mathbb P^m$ be the rank $r+1$ vector bundle over projective space associated to the locally free sheaf $\mathcal O_{\mathbb P^m}(-1)^{\oplus (r+1)}$. We consider the $m+r+1$ dimensional compact manifold
$$ X_{r,m}:=\mathbb P(\mathcal O_{\mathbb P^m}\oplus E),$$
which is a $\mathbb P^{r+1}$ bundle over $\mathbb P^m$, sometimes referred to as the projective completion of $E$. We denote by $D_\infty:=\mathbb P(E)$ the divisor at infinity in the fibers, which belongs to the linear system $|\mathcal O_{X_{r,m}}(1)|$. Let $D_H$ denote the pullback of the hyperplane divisor from $\mathbb P^m$.  By \cite{D} we know the vector space $N^1(X_{r,m},\mathbb R)$ (the space of numerically equivalent Cartier divisors), is spanned by $[D_H]$ and $[D_\infty]$. There is also a divisor class
$$[D_0]=[D_\infty]-[D_H]$$
which contains the quotient $\mathbb P(\mathcal O_{\mathbb P^m}\oplus \mathcal O_{\mathbb P}(-1)^{\oplus r})$. Taking a complete intersection of $r+1$ divisors in $[D_0]$ gives a subvariety $P$ of $X_{r,m}$, which is the zero section of the projection $\pi: X_{r,m}\rightarrow \mathbb P^m$. As stated in \cite{D}, the divisor at infinity $D_\infty$ does not intersect the zero section $P$.

The Calabi ansatz can be described as follows. Let $\omega_{FS}$ be the Fubini study metric on $\mathbb P^m$. Let $h$ be the Hermitian metric on $\mathcal O_{\mathbb P^m}(-1)$ such that $Ric(h)=-\omega_{FS}$, which means $h^{\oplus(r+1)}$ is a metric on $E$. Choose a local trivialization of $E$, which we write as $\nu=(\nu_1,...,\nu_{r+1})$, and let $(z_1,z_2,...,z_m)$ be inhomogeneous coordinates on $\mathbb P^m$. Consider the real variable $\rho$,  which in this trivialization can be expressed via
\be
\label{rhodef}
e^\rho=h(z)|\nu|^2=(1+|z|^2)|\nu|^2.
\ee
This leads to the following question: For a real valued function $\mu(\rho):\mathbb R\rightarrow\mathbb R$, what conditions on   $\xi_1\in\mathbb R$ and $\mu(\rho)$  guarantee that
$$ \omega:=\xi_1\omega_{FS}+\frac{i}{2\pi}\partial\bar\partial \mu(\rho)$$
defines a K\"ahler metric on $X_{r,m}$? To answer this,   Calabi demonstrated in \cite{Ca2} that $\omega$ is a well defined K\"ahler metric in the class
$$\omega\in \xi_1[D_H]+b[D_\infty]$$
if and only if $\xi_1$ and $b$ are positive, the function $\mu$ lies in $M_{0,b}$,  and in addition $\mu$ satisfies  $\frac{d\mu}{d\rho}>0,  \frac{d^2\mu}{d\rho^2}>0$, as well as  
 \be
 \label{gassump}
\qquad\frac{d}{dr}\hat \mu_{-\infty}(0)>0\qquad {\rm and}\qquad\frac{d}{dr}\hat \mu_{ \infty}(0)>0. \nonumber
 \ee
Here the space $M_{0,b}$ and the functions $\hat\mu_{-\infty}(r)$ and $\hat\mu_\infty(r)$ are defined in Section \ref{functionspace}.

Additionally, for the choice of real numbers $\xi_2$ and $q$, we can define a $(1,1)$ form $\alpha$ in the class $\xi_2[D_H]+q[D_\infty]\in H^{1,1}(X_{r,m},\mathbb R)$ via
$$ \alpha:=\xi_2\omega_{FS}+\frac{i}{2\pi}\partial\bar\partial g(\rho),$$
provided $g$ is in the space $M_{0,q}$. Here we have no assumptions on the positivity of derivatives of $g$ or the positivity of $\xi_2$ and $q$, since the class $[\alpha]$ is  arbitrary and need not be K\"ahler.

As mentioned in the introduction, the dHYM equation seeks a representative of the class $[\alpha]$ for which the the top dimensional form $(\omega+i\alpha)^{m+r+1}$ has constant argument. In particular the equation can be written as
\be
\label{dHYMCalabi}
{\rm Im}(e^{-i\hat\theta}(\omega+i\alpha)^{m+r+1})=0,
\ee
where $e^{-i\hat\theta}$ is a fixed constant. Integrating the above equation we see that the angle $\hat\theta$ must be the argument of the complex number
$$\zeta_{r,m}:=\int_{X_{r,m}}(\omega+i\alpha)^{m+r+1}.$$
By the $\partial\bar\partial$-Lemma $\zeta_{r,m}$ is independent of the choice of representative from $[\omega]$ or $[\alpha]$. Thus we see a simple necessary class condition for existence to a solution to \eqref{dHYMCalabi} is that $\zeta_{r,m}\neq0$.

We now seek a solution to \eqref{dHYMCalabi} by looking among representatives of $[\omega]$ and  $[\alpha]$ that satisfy the Calabi ansatz. Given $\omega$ and $\alpha$, if $\lambda_1,...,\lambda_{m+r+1}$ denote the real eigenvalues of the Hermitian endomorphism $\omega^{-1}\alpha$, then at a point where $\omega^{-1}\alpha$ is diagonal we can rewrite \eqref{dHYMCalabi} as
\be
\label{dhymeigen}
{\rm Im}\left(e^{-i\hat\theta}\frac{(\omega+i\alpha)^{m+r+1}}{\omega^{m+r+1}}\right)={\rm Im}\left(e^{-i\hat\theta}\prod_{k=1}^{m+r+1}(1+i\lambda_k)\right)=0.
\ee
In the case that    both $\omega$ and $\alpha$ satisfy the Calabi ansatz, it is shown in \cite{FL}  that the eigenvalues of $\omega^{-1}\alpha$ are
\be \underbrace{\frac{\xi_2+g'}{\xi_1+\mu'},...,\frac{\xi_2+g'}{\xi_1+\mu'}}_\text{m-{\rm times}}\,,\,\underbrace{\frac{g'}{\mu'},...,\frac{g'}{\mu'}}_\text{r-{\rm times}}\,,\,\frac{g''}{\mu''}.\nonumber
\ee
Here, as in Section \ref{functionspace}, we have let $'$ denote a derivative with respect to the variable $\rho$. Again denote $\mu'$ by $x$, which is strictly increasing and can be viewed as a coordinate on $(0,b)$. Also $g'$ defines a function $ f(x)=g'$, and taking a derivative in $\rho$ we see 
$$g''=\frac{d}{d\rho}f(x)=\frac{df}{dx}\frac{dx}{d\rho}= \frac{df}{dx}\mu''.$$
Thus we can re-write the eigenvalues as 
\be \underbrace{\frac{\xi_2+f}{\xi_1+x},...,\frac{\xi_2+f}{\xi_1+x}}_\text{m-{\rm times}}\,,\,\underbrace{\frac{f}{x},...,\frac{f}{x}}_\text{r-{\rm times}}\,,\,\frac{df}{dx}.\nonumber
\ee
Plugging into \eqref{dhymeigen} allows us to rewrite the dHYM equation as 
\be
{\rm Im}\left(e^{-i\hat\theta}\left(1+i\frac{\xi_2+f}{\xi_1+x}\right)^m\left(1+i\frac{f}x\right)^r\left(1+i\frac{df}{dx}\right)\right)=0.\nonumber
\ee
This is an ODE for a real function $f:[0,b]\rightarrow \mathbb R$.

Next we see this ODE is exact. Multiplying  by $(\xi_1+x)^mx^r$ gives
\be
\label{intermediateform}
{\rm Im}\left(e^{-i\hat\theta} (\xi+x+if )^m (x+if )^r\left(1+i\frac{df}{dx}\right)\right)=0,
\ee
where we have set $\xi=\xi_1+i\xi_2$. Now, define the complex polynomial
\be
w'(z):=e^{-i\hat\theta} (\xi+z )^m z^r,\nonumber
\ee
and let $w(z)$ be the antiderivative of $w'(z)$ satisfying $w(0)=0$. Then \eqref{intermediateform} can be written as
\be
{\rm Im}\left(w'(x+if)\left(1+i\frac{df}{dx}\right)\right)=\frac{d}{dx}{\rm Im} (w(x+if)  )=0,\nonumber
\ee
and we see that the dHYM equation in this setting is equivalent to finding a function $f:[0,b]\rightarrow\mathbb R$ whose graph lies on the level set of a harmonic polynomial $v={\rm Im}(w(z))$.

\begin{lemma}
\label{levelsetboundary}
The boundary points $z_1=0$ and $z_2=b+iq$ both lie on the same level set $\mathcal C_0$ of $v$. 
\end{lemma}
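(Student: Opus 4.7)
The first half of the claim is immediate: by construction $w(0)=0$, so $v(z_1) = {\rm Im}(w(0)) = 0$, placing $z_1=0$ on $\mathcal{C}_0$. The real content is therefore to show that ${\rm Im}(w(b+iq))=0$ as well. I would establish this by relating $w(b+iq)$ to the topological integral
$$\zeta_{r,m} = \int_{X_{r,m}}(\omega+i\alpha)^{m+r+1},$$
whose argument has already been taken as the definition of $\hat\theta$.

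The key calculation is essentially implicit in the derivation already carried out in the excerpt. At any point where $\omega^{-1}\alpha$ is diagonal, combining the eigenvalue computation with the identity $(\xi+z)^m z^r = e^{i\hat\theta} w'(z)$ yields
$$\frac{(\omega+i\alpha)^{m+r+1}}{\omega^{m+r+1}} = \left(1+i\tfrac{\xi_2+f}{\xi_1+x}\right)^m\left(1+i\tfrac{f}{x}\right)^r\left(1+i\tfrac{df}{dx}\right) = \frac{e^{i\hat\theta}}{(\xi_1+x)^m x^r}\,\frac{d}{dx}w(x+if(x)).$$
Under the Calabi ansatz, $\omega$ is simultaneously diagonal with eigenvalues $\xi_1+x$ (multiplicity $m$, in the base directions), $x$ (multiplicity $r$, in the horizontal fiber directions), and $\sigma=\mu''$ (in the vertical fiber direction). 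Consequently, after integration over $\mathbb{P}^m$ and the torus orbits inside the $\mathbb{P}^{r+1}$-fiber, the top form $\omega^{m+r+1}$ reduces to a positive multiple of $(\xi_1+x)^m x^r\,\sigma\,d\rho = (\xi_1+x)^m x^r\,dx$. Substituting gives
$$\zeta_{r,m} = C\,e^{i\hat\theta}\int_0^b \frac{d}{dx}w(x+if(x))\,dx = C\,e^{i\hat\theta}\bigl(w(b+iq)-w(0)\bigr) = C\,e^{i\hat\theta}\,w(b+iq),$$
for some positive real constant $C$ that depends only on $m$, $r$, and the Fubini--Study normalization.

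Since $\hat\theta = \arg\zeta_{r,m}$ by definition, the quantity $e^{-i\hat\theta}\zeta_{r,m} = C\,w(b+iq)$ is real, and because $C>0$ this forces $w(b+iq)\in\mathbb{R}$; equivalently ${\rm Im}(w(z_2))=0$, placing $z_2$ on $\mathcal{C}_0$ as well. The main obstacle will be making the Calabi reduction of $\int_{X_{r,m}}\omega^{m+r+1}$ to a one-dimensional integral fully rigorous, and in particular verifying that the extracted measure factor is a positive real multiple of $(\xi_1+x)^m x^r\,dx$. This is a standard calculation in the Calabi-symmetric setting (cf.\ \cite{Ca2, FL, Song, SY}), but requires careful bookkeeping of the base and fiber volume forms; once it is in hand, the reality of $w(b+iq)$ drops out purely from the definition of $\hat\theta$.
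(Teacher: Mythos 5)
Your proposal is correct and takes essentially the same route as the paper: the paper also reduces $\zeta_{r,m}$ via the Calabi ansatz to a one-dimensional integral with measure $(\xi_1+x)^m x^r\,dx$ (this is the explicit computation starting from equation (2.6) of \cite{SY}), writes $\zeta_{r,m} = \tfrac12\,{\rm Vol}(M)\,P(z_2)$ where $P$ is the antiderivative of $(\xi+z)^m z^r$ vanishing at $0$, and then observes that $w(z_2)=e^{-i\hat\theta}P(z_2)$ is real by the definition of $\hat\theta$. The step you flag as the ``main obstacle'' (verifying the positive real density factor) is exactly the bookkeeping the paper carries out in the proof, so your argument is the intended one.
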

\begin{proof}
By construction we have $w(z_1)=0$. Thus to complete the lemma we need to show ${\rm Im}(w(z_2))=0$. As a first step we  compute the complex number $\zeta_{r,m}$.
Equation (2.6) in \cite{SY} gives that
\bea
\label{volume}
\omega^{r+m+1}&=&(\xi_1+\mu')^m\frac{h^{r+1}}{e^{(r+1)\rho}}(\mu')^r\mu''\left(\omega_{FS}^m\wedge\prod_{k=1}^{r+1}\frac i{2\pi}d\nu^k\wedge d\bar\nu^k\right)\nonumber\\
&=&\frac{(\xi_1+\mu')^m(\mu')^r\mu''}{|\nu|^{2(m+1)}}\left(\omega_{FS}^m\wedge\prod_{k=1}^{r+1}\frac i{2\pi}d\nu^k\wedge d\bar\nu^k\right).\nonumber
\eea
Switching to polar coordinates in the fiber, with radius $R$ and $dS$ the spherical volume form, we see
\bea
\omega^{r+m+1}&=&\frac{(\xi_1+\mu')^m(\mu')^r\mu''}{R^{2(m+1)}}R^{2m+1}dR\wedge dS\wedge \omega_{FS}^m\nonumber\\
&=&(\xi_1+\mu')^m(\mu')^r\mu''d{\rm log}(R)\wedge dS\wedge \omega_{FS}^m.\nonumber
\eea
For simplicity we denote  $dS\wedge \omega_{FS}^m$ by $dV$. Using  equation \eqref{rhodef} we have $d\rho=d{\rm log}h+2d{\rm log}R$, and because the base direction is already saturated by $\omega_{FS}^m$, we arrive at
\be
\omega^{r+m+1}=\frac{(\xi_1+\mu')^m(\mu')^r\mu''}2d\rho\wedge dV=\nonumber\frac{(\xi_1+x)^mx^r}2 dx\wedge dV,\nonumber
\ee
where we used $x=\mu'(\rho)$. Let $P'(z)=(\xi+z)^mz^r$, and set $P(z)$ to be the antiderivative with $P(0)=0$. Furthermore, let $M$ denote the unit sphere bundle over $\mathbb P^m$ induced by $E\rightarrow \mathbb P^m$. Then 
\bea
\zeta_{r,m}&=&\int_{X_{r,m}}\left(1+i\frac{\xi_2+f}{\xi_1+x}\right)^m\left(1+i\frac{f}x\right)^r\left(1+i\frac{df}{dx}\right)\omega^{r+m+1}\nonumber\\
&=&\frac12\int_{X_{r,m}}\left(\xi+x+if\right)^m\left(x+i{f}\right)^r\left(1+i\frac{df}{dx}\right)dx\wedge dV\nonumber\\
&=&\frac12\int_{X_{r,m}}\frac{d}{dx}P(x+if)\,dx\wedge dV\nonumber\\
&=&\frac12{\rm Vol}(M)(P(b+iq)-P(0))=\frac12{\rm Vol}(M) P(z_2).\nonumber
\eea
This gives $e^{i\hat\theta}=\frac{P(z_2)}{|P(z_2)|}$. Now $w(z_2)=e^{-i\hat\theta}P(z_2)$, and so 
$${\rm Im}\left(w(z_2)\right)={\rm Im}\left(e^{-i\hat\theta}P(z_2)\right)={\rm Im}\left(\frac{\overline{P(z_2)}}{|P(z_2)|}P(z_2)\right)=0.$$

\end{proof}
We now aim to apply our previous results to the polynomial $w(z)$ defined above. Note that  $w(z)$ has two critical points, $0$ and $-\xi$ (unless $r=0$ and then only $-\xi$ is a critical point), and since $\xi_1>0$ there are no critical points in ${\mathcal H}_x$. Thus   we can use Theorem \ref{Cauchystable} to determine precisely which initial classes $[\omega]=\xi_1[D_H]+b[D_\infty]$ and $[\alpha]=\xi_2[D_H]+q[D_\infty]$ admit a solution to the deformed Hermitian-Yang-Mils equation.  We remark that for this setting $\vartheta=\{0,z_2\}$, and  $\xi$ is not on the level set $\mathcal C_0$, but rather a critical point of $w(z)$. However,  the choice of $\xi$ determines $w(z)$ and thus the counting function $N$, so we find it appropriate to consider it as part of the initial data when determining if the setup is stable.

We would like to interpret Theorem \ref{Cauchystable}  in terms of the geometry of $X_{r,m}$. In the introduction we defined the   {\it charge} of an analytic subvariety $V\subset X$ in \eqref{centralcharge}, which equivalently can be expressed as:
$$Z_V([\alpha])=-\int_V(-i)^{{\rm dim}(V)}(\omega+i\alpha)^{{\rm dim}(V)}.$$
 Our goal is to compute the charges of two subvarities, $D_\infty$ and $P$ of $X_{r,m}$,   and then prove Theorem \ref{mainCalabi}.

First, because  $D_\infty$ does not intersect the zero section $P$, we see that $D_\infty^k\cdot D_H^{m-k}\cdot P=0$ for any $1\leq k\leq m$. Since $P$ is the complete intersection of $r+1$ divisors in $[D_0]=[D_\infty-D_H]$, we have
\be
\label{DinftyP}
D_\infty^k\cdot D_H^{m-k}\cdot (D_\infty-D_H)^{r+1}=0
\ee
 for any $1\leq k\leq m$. Furthermore, because $D_H$ is the pullback of a class on $\mathbb P^m$, we see $D_H^k=0$ for $k>m$. Using these facts we can now compute  
 \bea
 \label{thing10}
 \int_{D_\infty}(\omega+i\alpha)^{r+m}&=&\int_{D_\infty}\left(\xi_1D_H+bD_\infty+i(\xi_2D_H+qD_\infty)\right)^{r+m}\\
 &=&D_\infty\cdot (\xi D_H+z_2 D_\infty)^{r+m}\nonumber\\
 &=&D_\infty\cdot (z_2(D_\infty-D_H)+(z_2+\xi) D_H)^{r+m}.\nonumber
 \eea
 Now, by \eqref{DinftyP}, and the fact that $D_H^k=0$ for $k>m$, the only non-zero term in the above binomial expansion occurs when $(D_\infty-D_H)$ is raised to the power $r$ and $D_H$ is raised to the power $m$.  Thus 
 \bea
 \int_{D_\infty}(\omega+i\alpha)^{r+m}&=&z_2^r(z_2+\xi)^mD_\infty \cdot(D_\infty-D_H)^r\cdot D_H^m\nonumber\\
 &=&z_2^r(z_2+\xi)^mD_\infty^{r+1}  \cdot D_H^m,\nonumber
 \eea
 where the last line follows because $D_H$ was already saturated to the power $m$. But $D_\infty^{r+1}  \cdot D_H^m$ is just some real constant times the volume of $X_{r,m}$. Since we are only worried about the complex argument of various  terms we  take this to be one. Thus our above computation gives:
\be
\label{thing11}
\int_{D_\infty}(\omega+i\alpha)^{r+m}=z_2^r(z_2+\xi)^m=e^{i\hat\theta}w'(z_2).
\ee

Next we turn to the question of what our counting function $N(z_2)$ is  computing. Since $z_2$ is away from the $y$-axis $N(z_2)$ is determined by the Cauchy index ${\rm ind}_{-\infty}^{q}(R_{b})$ where $R_{b}$ is the rational function 
$$R_b(y)=\frac{{\rm Im}(w'(b+iy))}{{\rm Re}(w'(b+iy))}.$$
If we consider the  path $b+it:\mathbb R\rightarrow\mathbb C$ that follows the line $x=b$ from $y=-\infty$ up to $y=q$, the  Cauchy index ${\rm ind}_{-\infty}^{q}(R_{b})$ computes the number of times $w'(b+it)$ crosses the real axis. We remark that we used $t\in(-\infty,q]$ to compute the counting function $N(z_2)$ because of how we chose to number the sets $\{A_1,...,A_n\}$. For the current geometric application, however,  it makes more sense to look at the path $b+it$ for $t\geq q$, as we shall see.

Consider the path $Z_{D_\infty}(t):[q,\infty)\rightarrow\mathbb C$ defined via
\be
\label{Dinftylift}
Z_{D_\infty}(t)=-(-i)^{m+r}\int_{D_\infty}(\xi D_H+(b+it)D_\infty)^{m+r}.
\ee
By \eqref{thing10} we see $Z_{D_\infty}(q)=Z_{D_\infty}([\alpha])$. As $t$ approaches infinity, the term $(itD_\infty)^{m+r}$ dominates the binomial expansion, and since the  factor of $(-i)^{m+r}$ cancels with the $(i)^{m+r}$ from this term, we see that $Z_{D_\infty}(t)$ approaches the negative real axis as $t$ goes to infinity. Furthermore, by \eqref{thing11} we have
\be
\label{Dinfinitydef}
Z_{D_\infty}(t)=-(-i)^{m+r}e^{i\hat\theta}w'(b+it).
\ee
Since $w'(z)$ has no critical points in ${\mathcal H}_x$, the path $Z_{D_\infty}(t)$ never passes through the origin.   Now, in the proof of Lemma \ref{labelinglemma}, we saw that the sign of $R_b(y)$ always jumps from positive to negative at each discontinuity. This implies $Z_{D_\infty}(t)$ winds counterclockwise around the origin. Now, the Cauchy index ${\rm ind}_{q}^{\infty}(R_{b})$ counts hows many times ${\rm Re}(w'(b+it))=0$. Using \eqref{Dinfinitydef}, this is precisely  where ${\rm Re}(-i^{m+r}e^{-i\hat\theta}Z_{D_\infty}(t))=0$, and this happens  when $Z_{D_\infty}(t)$ crosses the line   $\mathbb R (-i)^{m+r+1}e^{i\hat\theta}$. Thus  $Z_{D_\infty}(t)$ crosses the line determined by $Z_X([\alpha])$ precisely ${\rm ind}_{q}^{\infty}(R_{b})$ times. This gives a well defined lift of the argument of  $Z_{D_\infty}([\alpha])$.

We see something similar for the subvariety $P$. To begin
\bea
\int_P(\omega+i\alpha)^m&=&\int_P\left(\xi_1D_H+bD_\infty+i(\xi_2D_H+qD_\infty)\right)^{m}\nonumber\\
&=&(D_\infty-D_H)^{r+1}\cdot(\xi D_H+z_2 D_\infty)^{m}.\nonumber
\eea
Again by \eqref{DinftyP}, any term where $(D_\infty-D_H)^{r+1}$ is paired with $D_\infty^k$ for $k\geq1$ vanishes. Thus 
\be
\label{thing12}
 \int_P(\omega+i\alpha)^m=\xi^m(D_\infty-D_H)^{r+1}\cdot D_H^m=\xi^m \,D_\infty^{r+1}\cdot D_H^m=\xi^m.
 \ee
Here we again used the normalization that $D_\infty^{r+1}\cdot D_H^m=1$. Now, unlike the previous case,  here we see $ \int_P(\omega+i\alpha)^m$ is not equal to $e^{-i\hat\theta}w'(0)$ (expect when $r=0$), so we have to change our path slightly.

Notice that the path $e^{i\hat\theta}w'( it)=(it)^r(it+\xi)^m$, for $t\in[0,\infty)$, begins at the origin, wrapping counterclockwise out and approaching the ray $\mathbb R_+(i)^{r+m}$ as $t$ approaches $\infty$. The factor of $t^r$ only scales the path (and does not affect the argument), so we can divide by $t^r$ and still compute the same winding number. We can also multiply by $(-i)^r$, to arrive at a path that starts at $\xi^m$, and winds around the origin  approaching the ray $\mathbb R_+(i)^{m}$. Using this, we now define
\be
\label{ZPlift}
Z_{P}(t)=-(-i)^m\left(\frac{(-i)^r}{t^r}e^{i\hat\theta}w'( it)\right).
\ee
We see $Z_{P}(0)=-(-i)^m\xi^m$, which by \eqref{thing12} is equal to $-(-i)^m\int_P(\omega+i\alpha)^m$. Thus $Z_{P}(0)=Z_P([\alpha])$, and as $t$ grows  $Z_{P}(t)$ winds counterclockwise around the origin,  crossing the line determined  by $Z_X([\alpha])$  (which just as above corresponds to where ${\rm Re}(w'(it))=0$) exactly ${\rm ind}_{0}^{\infty}(R_{0})$ times. Here we remark that if the critical point $0$ of $w(z)$ is non-generic, the Cauchy index contributes an extra factor of $\frac12$. So in this case choose a small $\epsilon>0$ so ${\rm Re}(w'(it))\neq0$ for $t\in(0,\epsilon)$, and instead use the Cauchy index ${\rm ind}_{\epsilon}^{\infty}(R_{0})$. In either case the path $Z_{P}(t)$ continues to wind around the origin, becoming asymptotic to the negative real axis. This gives a well defined lift for the argument of  $Z_{P}([\alpha])$.

We now have defined lifts of $Z_{P}([\alpha])$ and $Z_{D_\infty}([\alpha])$. We next see how to lift the average angle $\hat\theta$, which determines a lift of $Z_X([\alpha])$. Recall that in the proof of Lemma \ref{levelsetboundary} we demonstrated that $e^{i\hat\theta}=\frac{P(z_2)}{|P(z_2)|}$, where the polynomial $P(z)$ is the antiderivative of $P'(z)=(\xi+z)^mz^r$ with $P(0)=0$. Now, it should be noted that rotating $\hat\theta$ by $\pi$ will still yield a solution of \eqref{dHYMCalabi}, and so in searching for a lift we need to specify $\hat\theta$ in an interval of length $\pi$ rather than $2\pi$. We will let $\hat\Theta$ denote our lift.

Note that $z_2$ lies on a specific curve $\Gamma_k$ on the level set $\mathcal C_0$, which is unique in the region $A_k$. Along $\Gamma_k$ we know ${\rm Im}(w)={\rm Im}(e^{-i\hat\theta}P)=0$, and so the vector $P(\Gamma_k(t))$ always points in the direction of $e^{i\hat\theta}$. Now, if $f(x)$ is a function such that $x+if(x)$ lies on $\Gamma_k$, by \eqref{intermediateform} we see 
$$\hat\theta=m\,{\rm arctan}\left(\frac{\xi_2+f}{\xi_1+x}\right)+r\,{\rm arctan}\left(\frac{ f}{ x}\right)+{\rm arctan}\left(\frac{df}{dx}\right).$$
Since $\Gamma_k$ is asymptotic ray to a ray in the tangent cone $T^\infty_{\mathcal C_0}$ with slope $\psi\in(-\frac{\pi}2,\frac\pi2)$, as $x\rightarrow \infty$ we see arctan$\left(\frac fx\right)$ approaches $\psi$. Thus sending $x$ to infinity we arrive at the lift
\be
\label{firstlift}
\hat\Theta=(m+r+1)\psi.
\ee
Although we used the explicit analytic structure of the level set $\mathcal C_0$ to determine \eqref{firstlift}, it can also be done algebraically using the Cauchy index. 

By Proposition \ref{onecurveregion}, each region $A_k$ contains a unique curve $\Gamma_k$ from $\mathcal C_0$, including the top region $A_{m+r+1}$.  Suppose $\Gamma_{m+r+1}$ is asymptotic to a ray of angle $\psi_{top}$. Because  the rays that make up $T^\infty_{\mathcal C_0}$ have angle $\frac\pi{m+r+1}$ between them, we know the top angle must satisfy 
$$\frac{\pi}2>\psi_{top}\geq\frac{\pi}2-\frac\pi{m+r+1},$$
with equality on the right hand side occurring in the non-generic case. We then define $\hat\theta_{top}=(m+r+1)\psi_{top}$.  In fact, the determination of $\hat\theta_{top}$ does not need any analytic knowledge of the level set of $\mathcal C_0$, as it can simply be defined as the unique element of $\{\hat\theta+\ell \pi\,|\,\ell\in\mathbb Z\}$  that   lies in the interval   $[(m+r+1)\frac\pi2-\pi,(m+r+1)\frac\pi2)$. We now lift $\hat\theta$ by determining which region $A_k$ contains $z_2$. Since the Cauchy index   ${\rm ind}_{q}^{\infty}(R_{b})$ counts how many times the set $\{z\in\mathbb C\,|\,{\rm Re}(z)=b,\,{\rm Im}(z)>q\}$ crosses $\mathcal D_0$, we have that $z_2$ lies in $A_{m+r+1-{\rm ind}_{q}^{\infty}(R_{b})}$. Now, as we have stated the rays of $T^\infty_{\mathcal C_0}$ have angle $\frac\pi{m+r+1}$ between them. Because the angle associated to $P(\Gamma_k(t))$ is $(m+r+1)$ times the angle of the asymptotic ray, we arrive at the definition
\be
\label{secondlift}
\hat\Theta=\hat\theta_{top}-{\rm ind}_{q}^{\infty}(R_{b})\pi.
\ee
Thus we get a well defined lift of arg$(\zeta_{r,m})$.

Recall   the charge $Z_X([\alpha])= -(-i)^{m+r+1}\zeta_{r,m}$. Multiplying by $(-i)^{m+r+1}$ rotates a vector clockwise by $(m+r+1)\frac \pi2$, and given that $\hat\theta_{top}$  lies in the interval  $[(m+r+1)\frac\pi2-\pi,(m+r+1)\frac\pi2)$, it follows that $(-i)^{m+r+1}e^{i\hat\theta_{top}}$ lies in the lower half plane, and thus $-(-i)^{m+r+1}e^{i\hat\theta_{top}}$ lies in the upper half plane. We now use the path $Z_X(t):=-(-i)^{m+r+1}e^{i\hat\theta_{top}-t\pi}$, for $t$ in $[0,{\rm ind}_{q}^{\infty}(R_{b})]$, which starts in the upper half plane and wraps clockwise about the origin, and ending in the same direction of $Z_X([\alpha])$, to lift arg$(Z_X([\alpha]))$.

We have now constructed three paths in $\mathbb C^*$, denoted $Z_X(t)$ $Z_P(t)$, and $Z_{D_\infty}(t)$, which  we use to determine lifts of the arguments of $Z_X([\alpha])$, $Z_P([\alpha])$, and $Z_{D_\infty}([\alpha])$, respectively. Next we see how their arguments compare, and using Theorem \ref{Cauchystable} we are able to conclude with a proof of Theorem \ref{mainCalabi}.

We begin with  $Z_{D_\infty}(t)$. Note that because this path wraps counterclockwise in positive $t$, we start with $t=-\infty$ along the negative real axis, and as $t$ decreases use ${\rm ind}_{q}^{\infty}(R_{b})$ to count the number of times the path crosses the line determined by $Z_X([\alpha])$ as $t$ goes to zero from above. Since the first place that $Z_{D_\infty}(t)$ crosses this line is in the upper half plane along the ray $\mathbb R_+[ -(-i)^{m+r+1}e^{i\hat\theta_{top}}]$ (which is where the path $Z_X(t)$ starts), it is evident by construction that after wrapping around ${\rm ind}_{q}^{\infty}(R_{b})$ times we have
\be
\label{conclude1}
{\rm arg}(Z_X([\alpha]))<{\rm arg}(Z_{D_\infty}([\alpha]))< {\rm arg}(Z_X([\alpha]))+\pi.
\ee
We remark that this inequality only holds because our choice of how to lift ${\rm arg}(Z_X([\alpha]))$ depended on the region  containing $z_2$. 

Now we turn to $Z_P([\alpha])$, and see how its argument relates to existence of a solution to \eqref{dHYMCalabi}. Note in the generic case there are exactly ${\rm ind}_{0}^{\infty}(R_{0})$ curves from $\mathcal D_0$ crossing the positive $y$-axis. For a solution to exists, we need to find a level curve of $\mathcal C_0$ connecting $z_2$ to the origin. As we have discussed $z_2$ lies in $A_{m+r+1-{\rm ind}_{q}^{\infty}(R_{b})}$, thus if ${\rm ind}_{q}^{\infty}(R_{b})<{\rm ind}_{0}^{\infty}(R_{0})$ any path connecting $z_2$ to the origin must cross $\mathcal D_0$ in ${\mathcal H}_x$ and no solution exists. In the generic case there are  $r$ curves from  $\mathcal D_0$ emanating from the origin. Thus the condition
$${\rm ind}_{0}^{\infty}(R_{0})\leq  {\rm ind}_{q}^{\infty}(R_{b})\leq {\rm ind}_{0}^{\infty}(R_{0})+r$$
is equivalent to existence of a solution to  \eqref{dHYMCalabi} by Theorem \ref{Cauchystable} $(ii)$. Now, ${\rm ind}_{0}^{\infty}(R_{0})$ counts the number of times the path $Z_P(t)$ crosses the line determined by $Z_X([\alpha])$, while ${\rm ind}_{q}^{\infty}(R_{b})$ counts how many times the path $Z_X(t)$ starting at $-(-i)^{m+r+1}e^{i\hat\theta_{top}}$ travels distance $\pi$. Since ${\rm ind}_{0}^{\infty}(R_{0})\leq  {\rm ind}_{q}^{\infty}(R_{b})$ we must always have ${\rm arg}(Z_X([\alpha]))<{\rm arg}(Z_{P}([\alpha]))$ for a solution to exist. Additionally, since ${\rm ind}_{0}^{\infty}(R_{0})\geq  {\rm ind}_{q}^{\infty}(R_{b})-r,$ the most $Z_X(t)$ can continue to wind clockwise (lowering the angle) after the first time it passes $Z_P([\alpha])$ is $(r+1)\pi$. Thus a solution to  \eqref{dHYMCalabi} exists if and only if 
\be
\label{conclude2}
{\rm arg}(Z_X([\alpha]))<{\rm arg}(Z_{P}([\alpha]))< {\rm arg}(Z_X([\alpha]))+(r+1)\pi.
\ee

In the non-generic case, there are $r+1$ curves emanating from the origin, instead of $r$. In this case,  Theorem \ref{Cauchystable} $(iii)$ implies that a smooth  solution to \eqref{dHYMCalabi} exists if and only if 
$${\rm ind}_{0}^{\infty}(R_{0})+\frac12\leq  {\rm ind}_{q}^{\infty}(R_{b})\leq {\rm ind}_{0}^{\infty}(R_{0})+r-\frac12.$$
Above we noted that in the non-generic case, the Cauchy index ${\rm ind}_{\epsilon}^{\infty}(R_{0})={\rm ind}_{0}^{\infty}(R_{0})-\frac12$ counts the number of times $Z_P(t)$ crosses the line determined by $Z_X([\alpha])$. Thus we see $Z_X(t)$ must wind clockwise past $Z_P([\alpha])$ and then travel a further distance $\pi$, implying ${\rm arg}(Z_X([\alpha]))+\pi<{\rm arg}(Z_{P}([\alpha])) $. While as above,  $Z_X(t)$ can continue to wind clockwise (lowering the angle) after the first time it passes $Z_P([\alpha])$, of a distance $(r+1)\pi$. By these two observations we see a smooth solution to  \eqref{dHYMCalabi} exists if and only if 
\be
\label{conclude3}
{\rm arg}(Z_X([\alpha]))+\pi<{\rm arg}(Z_{P}([\alpha]))< {\rm arg}(Z_X([\alpha]))+(r+1)\pi.
\ee
Now, dividing equations \eqref{conclude1},  \eqref{conclude2}, and  \eqref{conclude3} by $\pi$, and using the definition of the {\it grade} of $V$ from the introduction
$$ \phi_V([\alpha])=\frac1\pi{\rm arg}(Z_V([\alpha])),$$
we are able to conclude the proof of Theorem \ref{mainCalabi}.

 \end{normalsize}

\newpage


\begin{thebibliography}{4}

 

\bibitem{AB} M. Atiyah and R. Bott, {\it The Yang-Mills equations over Riemann surfaces}. Phil. Trans.
Roy. Soc. London A 308 (1982), 523-615.

\bibitem{Besenyei} A. Benesyei, {\it Peano's Unnoticed Proof of Borel's Theorem.} Amer. Math. Monthly 121 (2014), no. 1, 69-72. 

\bibitem{B} W.M. Boothby, {\it The topology of regular curve families with multiple saddle points.}
Amer. J. Math. 73 (1951), 405-438.

\bibitem{Bridgeland} T. Bridgeland {\it Stability conditions on triangulated categories,}
Ann. of Math. (2) 166 (2007), no. 2, 317-345.
 


\bibitem{Ca} E. Calabi, {\it Extremal K\"ahler metrics}, Seminar on Differential Geometry, Vol. { 102} of Ann. Math. Studies, Princeton Univ. Press, Princeton, N.J. (1982), 259-290. 

\bibitem{Ca2} E. Calabi, {\it M\'etriques K\"ahl\'eriennes et fibr\'es holomorphes}, Annales scientifiques de l'f\'E.N.S. $4^e$
s\'erie, tome 12, n. 2, (1979), p. 269-294.

\bibitem{C1} G. Chen, {\it The J-equation and the supercritical deformed Hermitian-Yang-Mills equation.}
Invent. Math. 225 (2021), no. 2, 529-602.

\bibitem{CDS1} X. Chen, S. Donaldson and S. Sun, {\it K\"ahler-Einstein metrics on Fano manifolds,
I: approximation of metrics with cone singularities}. JAMS 28 (2015), 183-197.

\bibitem{CDS2} X. Chen, S. Donaldson and S. Sun, {\it K\"ahler-Einstein metrics on Fano manifolds,
II: limits with cone angle less than $2\pi$}. JAMS 28 (2015), 199-234.

\bibitem{CDS3} X. Chen, S. Donaldson and S. Sun, {\it K\"ahler-Einstein metrics on Fano manifolds,
III: limits as cone angle approaches $2\pi$ and completion of the main proof}. JAMS 28 (2015), 235-278.

\bibitem{CJY} T.C. Collins, A. Jacob, and S.-T. Yau, {\it (1,1) forms with specified Lagrangian phase.} Camb. J. Math. 8 (2020), no. 2, 407-452.

 

\bibitem{CS}  T.C. Collins and  G. Sz\'ekelyhidi, {\it Convergence of the J-flow on toric manifolds} J. Differential Geom. 107 (2017), no. 1, 47-81.

\bibitem{CY2} T.C. Collins and S.-T. Yau, {\it Moment maps, nonlinear PDE, and stability in mirror symmetry}, 		arXiv:1811.04824 [math.DG].

\bibitem{CY} T.C. Collins and S.-T. Yau, {\it Moment maps, nonlinear PDE, and stability in mirror symmetry I; Geodesics}, 	Ann. PDE 7 (2021), no. 1, Paper No. 11, 73 pp.

\bibitem{CLT}  J. Chu, M.-C. Lee, and R. Takahashi {\it  A Nakai-Moishezon type criterion for supercritical deformed Hermitian-Yang-Mills equation}, 	arXiv:2105.10725.

\bibitem{D} O. Debarre,  {\it Higher-dimensional algebraic geometry}, Universitext. Springer-Verlag, New York, 2001. xiv+233 pp.

\bibitem{D1} Simon Donaldson, {\it A new proof of a theorem of Narasimhan and Seshadri}. Journal of Differential Geometry 18 (1983), 269-277.

\bibitem{D2} Simon Donaldson, {\it Anti-self-dual Yang-Mills connections on complex algebraic surfaces and
stable vector bundles}. Proc. London Math. Soc. 50 (1985), 1-26.


\bibitem{DFR} M. Douglas, B. Fiol, and C. R\"omelsberger, {\it Stability and BPS branes,} J. High Energy Phys. 2005, no. 9, 006, 15 pp.

\bibitem{E} M. Eisermann, {\it The fundamental theorem of algebra made effective: an elementary real-algebraic proof via Sturm chains}. 
Amer. Math. Monthly 119 (2012), no. 9, 715-752.


\bibitem{FL} H. Fang and M. Lai, {\em Convergence of general inverse $\sigma_k$-flow on K\"ahler manifolds with Calabi ansatz},
Trans. Amer. Math. Soc. 365 (2013), no. 12, 6543-6567. 

\bibitem{JS} A. Jacob and N. Sheu {\it The deformed Hermitian-Yang-Mills equation on the blowup of $\mathbb P^n$}. 	arXiv:2009.00651

\bibitem{JY} A. Jacob and S.-T. Yau, {\em A special Lagrangian type equation for holomorphic line bundles}, Math. Ann. 369 (2017). no 1-2, 869-898.

\bibitem{JR} R.P. Jerrard and L.A. Rubel, {\it On the curvature of the level lines of a harmonic function.}
Proc. Amer. Math. Soc. 14 (1963), 29-32.

\bibitem{LS} M. Lejmi, and G. Sz\'ekelyhidi, {\em The $J$-flow and stability}, Advances in Math. 274 (2015), 404-431.

\bibitem{LYZ} C. Leung, S.-T. Yau, and E. Zaslow, {\it From special Lagrangian to Hermitian-Yang-Mills via Fourier-Mukai transform,} Winter School on Mirror Symmetry, Vector Bundles and Lagrangian Submanifolds (Cambridge, MA, 1999), 209-225, AMS/IP Stud. Adv. Math., 23, Amer. Math. Soc., Providence, RI, 2001. 


\bibitem{MMMS} M. Marino, R. Minasian, G. Moore, and A. Strominger, {\it Nonlinear Instantons from Supersymmetric p-Branes,} hep-th/9911206. 

\bibitem{Sheu} N. Sheu, {\it  The deformed Hermitian-Yang-Mills equation with Calabi Ansatz,} Ph.D. Thesis. UC Davis. 

\bibitem{S} J. P. Solomon {\it The Calabi homomorphism, Lagrangian paths and special Lagrangians,}
Math. Ann 357 (2013), no. 4, 1389-1424.

\bibitem{Song} J. Song, {\it Ricci flow and birational surgery}, arXiv:1304.2607.

\bibitem{SW2} J. Song and B. Weinkove, {\it Contracting exceptional divisors by the K\"ahler-Ricci flow},   Duke
Math. J. 162 (2013), no. 2, 367-415.

\bibitem{SW3} J. Song and B. Weinkove, {\it Contracting exceptional divisors by the K\"ahler-Ricci flow II}, Proc.
Lond. Math. Soc. (3) 108 (2014), no. 6, 1529-1561.



\bibitem{SW4} J. Song and B. Weinkove, {\it The K\"ahler-Ricci flow on Hirzebruch surfaces}, J. Reine Angew.
Math. 659 (2011), 141-168.


\bibitem{SY} J. Song and Y. Yuan, {\it Metric flips with Calabi ansatz},   Geom. Func. Anal. 22 (2012), no.
1, 240-265.


\bibitem{T} R. P. Thomas {\it Moment maps, monodromy, and mirror manifolds}, Symplectic geometry and mirror symmetry (Seoul, 2000), 467-498, World Sci. Publ., River Edge, NJ,
2001.
\bibitem{UY} K. Uhlenbeck and S.-T. Yau, {\it On the existence of Hermitian Yang-Mills connections
in stable vector bundles}. Communications on Pure and Applied Mathematics 39 (1986),
257-293.

\bibitem{Yau}  S.-T. Yau, {\it Calabi's conjecture and some new results in algebraic geometry}. Proceedings of the National Academy of Sciences of the United States of America 74 (1977),1798-1799.

\end{thebibliography}
\end{document}